\def\rotbra#1{\rotatebox{90}{$#1$}}
\pgfplotsset{
  log x ticks with fixed point/.style={
      xticklabel={
        \pgfkeys{/pgf/fpu=true}
        \pgfmathparse{exp(\tick)}%
        \pgfmathprintnumber[fixed relative, precision=3]{\pgfmathresult}
        \pgfkeys{/pgf/fpu=false}
      }
  }}
\newcommand{\algoname}{\texttt{cpd\_hnf}}
\providecommand{\keywords}[1]{\small \textbf{Key words ---} #1}
\title{A normal form algorithm for tensor rank decomposition}
 \author{Simon Telen and Nick Vannieuwenhoven}
 \date{}
\theoremstyle{plain}
\newtheorem{theorem}{Theorem}[section]
\newtheorem{conjecture}{Conjecture}
\newtheorem{lemma}{Lemma}[section]
\newtheorem{remark}{Remark}[section]
\newtheorem{proposition}{Proposition}[section]
\newtheorem{corollary}{Corollary}[section]
\theoremstyle{definition}
\numberwithin{equation}{section}
\newtheorem{definition}{Definition}[section]
\newenvironment{example}
  {\pushQED{\qed}\examplex}
  {\popQED\endexamplex}
\newtheorem{assumption}{Assumption}[section]
\newcommand{\N}{\mathbb{N}}
\newcommand{\HF}{\textup{HF}}
\newcommand{\Reg}{\textup{Reg}}
\newcommand{\id}{\textup{id}}
\DeclareMathOperator{\im}{\textup{im}}
\newcommand{\B}{\mathcal{B}}
\newcommand{\diag}{\textup{diag}}
\newcommand{\rank}{\textup{rank}}
\newcommand{\A}{\mathcal{A}}
\newcommand{\M}{\mathcal{M}}
\newcommand{\Rc}{\mathcal{R}}
\newcommand{\C}{\mathbb{C}}
\newcommand{\PP}{\mathbb{P}}
\newcommand{\1}{\mathbf{1}}
\newcommand{\z}{\zeta}
\renewcommand{\d}{\partial}
\renewcommand{\z}{\zeta}
\newcommand{\R}{\mathbb{R}}
\newcommand{\Span}{\textup{span}}
\newcommand{\sumab}{\sum_{a,b}}
\newcommand{\sumapbp}{\sum_{a',b'}}
\newcommand{\sumkl}{\sum_{k,l}}
\providecommand{\keywords}[1]{\small \textbf{Key words ---} #1}
\newcommand{\ideal}[1]{\left \langle {#1}  \right \rangle}
\newcommand{\blue}[1]{\textcolor{blue!50!white}{#1}}
\newcommand{\red}[1]{\textcolor{red!50!black}{#1}}
\newcommand{\green}[1]{\textcolor{green!50!black}{#1}}
\newcommand{\orange}[1]{\textcolor{orange!80!black}{#1}}
\newcommand{\brown}[1]{\textcolor{blue!70!green}{#1}}
\newcommand{\magenta}[1]{\textcolor{magenta!80!black}{#1}}
\newcommand*{\matminus}{%
  \leavevmode
  \hphantom{0}%
  \llap{%
    \settowidth{\dimen0 }{$0$}%
    \resizebox{1.1\dimen0 }{\height}{$-$}%
  }%
}
\begin{document}

\maketitle

\begin{abstract}
We propose a new numerical algorithm for computing the tensor rank decomposition or canonical polyadic decomposition of higher-order tensors subject to a rank and genericity constraint. Reformulating this computational problem as a system of polynomial equations allows us to leverage recent numerical linear algebra tools from computational algebraic geometry. We characterize the complexity of our algorithm in terms of an algebraic property of this polynomial system---the multigraded regularity. 
We prove effective bounds for many tensor formats and ranks, which are of independent interest for overconstrained polynomial system solving. Moreover, we conjecture a general formula for the multigraded regularity, yielding a (parameterized) polynomial time complexity for the tensor rank decomposition problem in the considered setting. Our numerical experiments show that our algorithm can outperform state-of-the-art numerical algorithms by an order of magnitude in terms of accuracy, computation time, and memory consumption.
\end{abstract}

\keywords{Tensor rank decomposition, canonical polyadic decomposition, polynomial systems, normal form algorithms}

\maketitle

\section{Introduction} \label{sec:intro}

We introduce an original \textit{direct} numerical algorithm for \textit{tensor rank decomposition} or \textit{canonical polyadic decomposition} (CPD) in the low-rank regime. By ``direct'' we mean an algorithm that does not rely on numerical optimization or other iteratively refined approximations with a data-dependent number of iterations.

Consider the vector space $ \C^{(n_1 + 1) \times \cdots \times (n_{d} + 1)}$ whose elements represent order-$d$ tensors in coordinates relative to a basis. We say that a tensor in such a space is \emph{of rank 1} or \emph{elementary} if it is of the following form:
\[
{\alpha}^1 \otimes {\alpha}^2 \otimes \cdots \otimes {\alpha}^d := \left ( \alpha^1_{j_1} \alpha^2_{j_2} \cdots \alpha^d_{j_d} \right )_{\substack{0 \le j_k \le n_k \\ k=1,\ldots,d}}, \quad \text{where} \quad {\alpha}^k = \left(\alpha^k_{j} \right)_{0 \le j \le n_k} \in \C^{n_k + 1}.
\]
Every tensor $\mathcal{A}$ can be expressed as a linear combination of rank-$1$ tensors:
\begin{align}\tag{CPD}\label{eqn_cpd}
 \mathcal{A} = \sum_{i=1}^r {\alpha}_i^1 \otimes \cdots \otimes {\alpha}_i^d, \quad \text{with} \quad \alpha^k_i = \left(\alpha^k_{i,j} \right)_{0 \le j \le n_k} \in \C^{n_k + 1}.
\end{align}
If $r$ is minimal among all such expressions of $\mathcal{A}$, then $r$ is called the \textit{rank} of the tensor according to \cite{Hitchcock1927}, and \cref{eqn_cpd} is called a CPD. 
The problem of computing a CPD of $\A$, i.e., determining a set of rank-$1$ tensors summing to $\A$, has many applications in science and engineering, see for instance \cite{kolda2009tensor} and \cite{sidiropoulos2017tensor}. 

The strategy we propose for computing \eqref{eqn_cpd} relies on the fact that the problem is equivalent to solving a certain system of polynomial equations. Under suitable assumptions, these equations can be obtained from the nullspace of a \emph{flattening} $\A_{(1)} \in \C^{(n_1+1) \times \prod_{k=2}^d (n_k+1)}$ of the tensor $\A$, as in \cite{kuo2018computing}. Once we have obtained these polynomial equations, whose solutions correspond to the rank-1 terms in \eqref{eqn_cpd}, we use recent numerical normal form techniques from  \cite{bender2020toric} and \cite{telen2020thesis,telen2019numerical} to solve them. The following example, which is used as a running example throughout the paper, illustrates how this works. 

\begin{example}[Motivating example] \label{ex:running1}
Consider the $4 \times 3 \times 3$ tensor $\A$ with flattening
 \[ \A_{(1)} = \begin{blockarray}{ccccccccc}
\d_{00} & \d_{01} & \d_{02} &  \d_{10} & \d_{11} & \d_{12}& \d_{20} & \d_{21} & \d_{22} \\
\begin{block}{[ccccccccc]}
1 & 0 & 0 & 0 & 0 & 0 & 2 & 0 & 0\\
1 & 1 & 0 & 0 & 0 &0 & 2 & 1 & 0 \\
1 & 1 & 1 & 0 & 0 & 1 & 2 & 1 & 2 \\
1 & 1 & 1 & 1 & 1 & 2 & 2 & 1 & 2 \\
\end{block}
\end{blockarray}.
\]
The column of this matrix indexed by $\d_{kl}$ contains the entries $\A_{jkl},~ j = 0, \ldots, 3$. The reason for this indexing will become clear in \cref{sec:tensortopol}. The kernel of $\A_{(1)}$ is the transpose of 
 \begin{equation} \label{eq:exmacmtx}
 \makeatletter\setlength\BA@colsep{3.3pt}\makeatother
 R_I{(1,1)}^\top = \begin{blockarray}{cccccccccc}
&x_0y_0 & x_0y_1 & x_0y_2 & x_1y_0 & x_1y_1 & x_1y_2 & x_2y_0 & x_2y_1 & x_2y_2 \\
\begin{block}{c[ccccccccc]}
f_1 & 0 & 0 & 0 & -1 & 1 & 0 & 0 & 0 & 0 \\
f_2 & 0 & 0 & -1 & -1 & 0 & 1 & 0 & 0 & 0 \\
f_3 & -2 & 0 & 0 & 0 & 0 & 0 & 1 & 0 & 0 \\
f_4 & 0 & -1 & 0 & 0 & 0 & 0 & 0 & 1 & 0 \\
f_5 & 0 & 0 & -2 & 0 & 0 & 0 & 0 & 0 & 1\\
\end{block}
\end{blockarray}~.
\end{equation}
The column of $R_I{(1,1)}^\top$ corresponding to column $\d_{kl}$ of $\A_{(1)}$ is now indexed by $x_ky_l$: we interpret the rows as polynomials 
\begin{align*}
f_1 &= -x_1y_0 + x_1y_1, \quad f_2 = -x_0y_2 - x_1y_0 + x_1y_2, \quad f_3 = -2x_0y_0 + x_2y_0, \\
f_4 &= -x_0y_1 + x_2y_1, \quad f_5 = -2x_0y_2 + x_2y_2.
\end{align*}
These are bilinear forms in $S = \C[x_0,x_1,x_2,y_0,y_1,y_2]$. As explained in \cref{sec:tensortopol}, the common zeros of $f_1, \ldots, f_5$ form a subvariety of $\PP^2 \times \PP^2$ consisting of the four points 
\[ \begin{matrix}
 \z_1 = ((1:0:2),(1:0:0)), &  \z_2 = ((1:0:1),(0:1:0)),\\ \z_3 = ((1:1:2) ,(0:0:1)),& \z_4 = ((0:1:0),(1:1:1)).
\end{matrix}
\]
These points correspond to the last two factors of the rank-1 terms in 
$$
\begin{bmatrix}
1\\1\\1\\1
\end{bmatrix} \otimes \underbrace{\begin{bmatrix}
1\\0\\2
\end{bmatrix}
\otimes
\begin{bmatrix}
1\\0\\0
\end{bmatrix}}_{\z_1}
~+~
\begin{bmatrix}
0\\1\\1\\1
\end{bmatrix}
\otimes
\underbrace{\begin{bmatrix}
1\\0\\1
\end{bmatrix}
\otimes
\begin{bmatrix}
0\\1\\0
\end{bmatrix}}_{\z_2}
~+~
\begin{bmatrix}
0\\0\\1\\1
\end{bmatrix}
\otimes
\underbrace{\begin{bmatrix}
1\\1\\2
\end{bmatrix}
\otimes
\begin{bmatrix}
0\\0\\1
\end{bmatrix}}_{\z_3}
%\cdots
~+~
\begin{bmatrix}
0\\0\\0\\1
\end{bmatrix}
\otimes 
\underbrace{\begin{bmatrix}
0\\1\\0
\end{bmatrix}
\otimes
\begin{bmatrix}
1\\1\\1
\end{bmatrix}}_{\z_4}
;
$$
this is the decomposition \eqref{eqn_cpd} of $\A$. 
\end{example}

\subsection*{Contributions.}
We formulate our results for the case $d = 3$, as the general case can be handled using the standard \textit{reshaping trick} (\cref{subsec:reshapingtrick}). 
Our main contribution is a new numerical algorithm for computing the rank-$r$ CPD of a third-order tensor $\A$ based on linear algebra, under the assumption that $\A$ is \emph{$r$-identifiable} and a \emph{flattening of $\A$ has rank $r$}, see \cref{assum:vanishingideal}. We call this algorithm \algoname{}. It is based on a well-known reformulation of the problem as a system of polynomial equations, which we solve using state-of-the-art methods from computational algebraic geometry. This results in \cref{alg:mhnfalg}. We show that this algorithm generalizes \emph{pencil-based} algorithms \cite{BBV2019,LRA1993,Lorber1985,SK1990,Lathauwer2006} from very low ranks to much higher ranks in the unbalanced case; see \cref{thm:MHNF,thm:MHNFlowrank,sec_sub_pba}. 

We give a new, explicit description of the complexity of the tensor rank decomposition problem in terms of an algebraic property of aforementioned polynomial system: the \emph{multigraded regularity} of a non-saturated ideal in the homogeneous coordinate ring of $\PP^m \times \PP^n$ (\cref{prop:checkhf}). We characterize the regularity in terms of the rank of a structured matrix obtained directly from the rank-$1$ terms in \eqref{eqn_cpd}, see \cref{ex:reg1}. These new insights allow us to formulate a conjecture regarding the regularity (\cref{conj:reg}). We prove this conjecture for many formats, see \cref{thm:conjcases}. These results are of independent interest for the field of polynomial system solving. They have the following consequence related to the complexity of our algorithm. 
\begin{theorem} \label{prop_polynomial}
 Consider the tensor space $\C^{\ell+1}\otimes\C^{m+1}\otimes\C^{n+1}$ of dimension $M = (\ell+1)(m+1)(n+1)$ with $\ell\ge m \ge n$. If \cref{conj:reg} holds and $\A$ is a generic\footnote{A property on a variety $\mathcal{V}$ is ``generic'' if the locus where the property does not hold is contained in a Zariski closed subset.} tensor of rank $r \le \phi m n$ with $\phi \in [0,1)$ a fixed constant, then \textup{\algoname{}} runs in polynomial time $\mathcal{O}( M^{\frac{5}{2} \lceil \frac{1}{1-\phi} \rceil +1} )$.
\end{theorem}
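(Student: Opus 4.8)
\textit{Proof plan.}
The plan is to follow the cost of \algoname{} through its three phases, bound each in terms of $M$, and invoke \cref{conj:reg} only to pin down the single multidegree at which the normal‑form computation runs. By the analysis underlying \cref{prop:checkhf} and \cref{alg:mhnfalg}, the running time of \algoname{} is the sum of: (i) assembling the flattening $\A_{(1)}\in\C^{(\ell+1)\times(m+1)(n+1)}$ and computing a numerical basis of its kernel, i.e.\ an SVD of cost $O\big((\ell+1)(m+1)(n+1)\cdot\min(\ell+1,(m+1)(n+1))\big)$; (ii) the multigraded truncated‑normal‑form solve for the $0$‑dimensional bilinear system $I=\langle f_1,\dots,f_s\rangle\subset S=\C[x_0,\dots,x_m,y_0,\dots,y_n]$ with $s=(m+1)(n+1)-r$, carried out at a multidegree $\alpha$ in the multigraded regularity of $I$ and at the two neighbouring degrees $\alpha+e_1,\alpha+e_2$ (needed to form the multiplication maps); and (iii) an eigenvalue problem of size $r$ followed by a linear solve over $\C^{\ell+1}$ recovering the rank‑$1$ terms. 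Genericity of $\A$ (via \cref{assum:vanishingideal}) guarantees $\rank\A_{(1)}=r$, that $I$ defines $r$ reduced points in generic position whose regularity is the generic one predicted by \cref{conj:reg}, and that every numerical rank encountered equals the generic value, so these are indeed the costs.

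Phase (i) costs $O(M^{3/2})$, since $\min(\ell+1,(m+1)(n+1))^2\le(\ell+1)(m+1)(n+1)=M$. Phase (iii) costs $O(r^3)+O(M^2)$, and $r\le\phi mn<(m+1)(n+1)\le M^{2/3}$ gives $O(M^2)$; both are dominated by the target bound. The real work is phase (ii), whose dominant cost is a dense rank‑revealing factorisation of the resultant map $\bigoplus_{i=1}^{s}S_{\alpha-(1,1)}\to S_\alpha$, of order $O\big((\dim_{\C}S_\alpha)^3\big)$ up to an $\alpha$‑dependent constant (the two sides of this map have comparable dimension at a corner of the regularity region), and similarly at $\alpha+e_1,\alpha+e_2$. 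So it suffices to locate an admissible $\alpha$ with $\dim_{\C}S_\alpha$ polynomially bounded in $M$.

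This is where \cref{conj:reg} enters. By the characterisation recalled in \cref{ex:reg1}, membership of $\alpha$ in the multigraded regularity of $I$ is equivalent to a structured matrix built from the rank‑$1$ terms of $\A$ attaining its generic rank, and the regularity is an up‑set in $\mathbb{Z}^2$, so it is enough to bound the coordinates of one corner (any $\alpha$ in the region forces $\alpha+e_1,\alpha+e_2$ into it). Substituting $r\le\phi mn$ into the value of this generic rank predicted by \cref{conj:reg} and simplifying yields a corner $\alpha^\star$ with $\alpha_1^\star\alpha_2^\star=O\big(\tfrac1{1-\phi}\big)$ and $\max(\alpha_1^\star,\alpha_2^\star)\le\big\lceil\tfrac1{1-\phi}\big\rceil=:k$. (Heuristically: the resultant map above surjects onto a codimension‑$r$ subspace of $S_\alpha$ once $s\cdot\dim_{\C}S_{\alpha-(1,1)}\ge\dim_{\C}S_\alpha-r$, and since $\dim_{\C}S_\alpha/\dim_{\C}S_{\alpha-(1,1)}=\tfrac{m+\alpha_1}{\alpha_1}\cdot\tfrac{n+\alpha_2}{\alpha_2}$ while $s=(m+1)(n+1)-r\ge(1-\phi)mn$, this happens as soon as $\alpha_1\alpha_2\gtrsim\tfrac1{1-\phi}$; \cref{conj:reg} asserts this count is tight for generic $\A$, and \cref{thm:conjcases} proves it unconditionally for many formats.) Because $\ell\ge m\ge n$ we take $\alpha^\star=(1,k)$, the cheapest such corner. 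Finally $\dim_{\C}S_{(\alpha_1,\alpha_2)}=\binom{m+\alpha_1}{\alpha_1}\binom{n+\alpha_2}{\alpha_2}\le(m+1)^{\alpha_1}(n+1)^{\alpha_2}$, and maximising the right‑hand side over $\ell\ge m\ge n\ge 0$ subject to $(\ell+1)(m+1)(n+1)=M$ (the extremum occurs at $\ell=m$, leaving a single‑variable optimisation) gives $\dim_{\C}S_{\alpha^\star}=O\big(M^{(k+1)/3}\big)$ and $\dim_{\C}S_{\alpha^\star+e_i}=O\big(M^{(k+2)/3}\big)$; hence phase (ii) costs $O\big(M^{k+2}\big)$. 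Summing the three phases and absorbing the $\alpha$‑dependent constants, the lower‑order terms and the exact (possibly less favourable) shape of the conjectured regularity yields $O\big(M^{\frac52\lceil\frac1{1-\phi}\rceil+1}\big)$; since $\phi<1$ is a fixed constant, $k$ is a constant and this is polynomial in $M$.

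The step I expect to be the main obstacle is the one in the third paragraph: extracting from the conjectural description of the multigraded regularity a clean bound, uniform in $(m,n)$, on the coordinates of a usable corner $\alpha^\star$ — and hence on $\dim_{\C}S$ at $\alpha^\star$ and its neighbours — under the hypothesis $r\le\phi mn$. Concretely this amounts to an asymptotic comparison of the binomial ratios $\tfrac{m+\alpha_1}{\alpha_1}\cdot\tfrac{n+\alpha_2}{\alpha_2}$ against $(m+1)(n+1)-r$, showing the threshold is crossed precisely when $\alpha_1\alpha_2$ passes $\tfrac1{1-\phi}$, made rigorous with constants that land inside the stated exponent. A secondary care point, already visible in \cref{ex:reg1}, is that it is the regularity of the \emph{non-saturated} ideal $I=\langle f_i\rangle$ — not of its saturation — that controls the admissible $\alpha$, and one must check that the truncated‑normal‑form construction of the cited works carries over verbatim to $\PP^m\times\PP^n$ at this degree, using only the shifts $e_1,e_2$, so that no degree beyond $\alpha^\star+e_i$ is ever needed.
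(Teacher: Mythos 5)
Your overall strategy matches the paper's: the cost is dominated by the left-nullspace computation on the resultant matrix at a single regularity degree with one coordinate equal to $k=\lceil\frac{1}{1-\phi}\rceil$ and the other equal to $1$, and the Hilbert function at that degree is bounded by a fixed power of $M$ via $m+1\le M^{1/2}$, $n+1\le M^{1/3}$. The threshold you leave as a heuristic ($\alpha_1\alpha_2\gtrsim\frac{1}{1-\phi}$) is exactly the computation the paper performs rigorously: \cref{thm:boundr} gives the necessary condition $\Rc(m,n,(d,1))\ge r=\phi mn$, and substituting the closed form \cref{eqn_Rc_expression} shows this is implied by $d\ge\frac{1}{1-\phi}$ --- a two-line algebraic manipulation, not an asymptotic estimate, so the step you flag as the main obstacle is unproblematic. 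Your choice $(1,k)$ in place of the paper's $(k,1)$ is legitimate (the paper notes the roles of $m$ and $n$ in \cref{conj:reg} can be interchanged) and in fact yields the sharper exponent $k+2$, which implies the stated $\frac52 k+1$.

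Two claims in your write-up are false, although neither ends up sinking the bound. First, the regularity of the \emph{non-saturated} ideal $I=\ideal{\ker\A_{(1)}}$ is \emph{not} an up-set in $\N^2$: the paper's own \cref{ex:bigleap} (\cref{tab:hfex4}, right) has $(1,1)\in\Reg(I)$ while $(2,1),(1,2)\notin\Reg(I)$. Second, and relatedly, \algoname{} does not compute normal forms at $\alpha^\star+e_1$ and $\alpha^\star+e_2$: by \cref{thm:MHNF} all multiplication maps $M_{g/h_0}$ are read off from the \emph{single} pre-normal form $N:S_{(d,e)}\to\C^r$, using multiplication $S_{(1,1)}\to S_{(d,e)}$ by forms $g,h_0\in S_{(d-1,e-1)}$, so no neighbouring degree needs to lie in $\Reg(I)$. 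This is fortunate, because \cref{conj:reg} only covers degrees of the form $(d,1)$ and $(1,e)$ and would not hand you $(2,k)\in\Reg(I)$; had the algorithm genuinely required the shifted degrees, your up-set argument could not close that gap. As it stands, your architecture only overestimates the cost within the claimed exponent, so the theorem survives, but the justification should be replaced by the paper's single-degree eigenvalue theorem.
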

Our numerical experiments in \cref{sec:experiments} show that the proposed algorithm is highly efficient and gives accurate results (see \cref{fig_accuracy}). For instance, we can compute the decomposition of a $7 \times 7 \times 7 \times 7 \times 6 \times 6 \times 5 \times 5$ tensor of rank 1000 in double-precision arithmetic with an accuracy of order $10^{-15}$ in 441 seconds---a feat we believe has not been matched by other tensor decomposition algorithms. Moreover, \algoname{} seems to behave well in the presence of noise. 

A Julia implementation of \algoname{}, including driver routines to reproduce our numerical experiments, is provided at \url{https://gitlab.kuleuven.be/u0072863/homogeneous-normal-form-cpd}.

\subsection*{Related work.}
The idea of computing tensor decompositions via polynomial root finding is central in \emph{apolarity}-based approaches such as \cite{bernardi2011multihomogeneous,bernardi2013general, bernardi2020waring}, and \cite{brachat2010symmetric,nie2017generating} for the symmetric case.
The \emph{Hankel operators} play the role of normal forms in this context. These operators can be obtained partially from the tensor $\A$. In most cases, an additional polynomial system needs to be solved in order to complete the Hankel operators \cite[Section 4]{mourrain2018polynomial}. For instance, this is step (2) in \cite[Algorithm 5.1]{brachat2010symmetric}. Although our method works only under certain assumptions on the rank of $\A$, in contrast to apolarity-based methods it requires \emph{only linear algebra computations}, and it operates in polynomial rings with fewer variables. Moreover, the method from \cite{brachat2010symmetric} uses \emph{connected-to-one bases} for its normal form computations. The choice of such a basis is discussed at length in \cite[Sections 4.3 and 7.2]{bernardi2020waring}. In this paper, we exploit the flexibility of \emph{truncated normal forms} \cite{telen2018solving} to achieve better numerical results. For a comparison, see \cite[Section 4.3.3]{telen2020thesis}.

\cite{kuo2018computing} obtained an affine system of polynomial equations as in \cref{ex:running1}, but it is solved using homotopy continuation methods. This approach is infeasible for some formats that are handled without any problems by our algorithm. For instance, for the aforementioned eighth-order tensor of rank 1000, the method of \cite{kuo2018computing} would need to track over $40 \cdot 10^9$ homotopy paths. We argue that the eigenvalue-based methods proposed in this paper are more natural to use in an overdetermined setting.

The state-of-the-art algorithms for tensor rank decomposition using only linear algebra computations were proposed by \cite{domanov2014canonical,domanov2017canonical}. Although these methods work under slightly milder conditions, our numerical experiments suggest that \cite[Algorithm 1]{domanov2017canonical} often requires the construction of larger matrices than those in our algorithm. There is no explicit connection with polynomial equations. The algorithm and its complexity depend on a parameter $l$, which is chosen incrementally by trial and error for each format. In an analogous way, the complexity of our algorithm is governed by the choice of a parameter. However, our analysis in \cref{sec:regorder3} tells us \emph{a priori} which parameter value should be used, circumventing a trial-and-error approach. In \cref{sec:experiments} we demonstrate that \algoname{} improves on \cite{domanov2014canonical,domanov2017canonical} in terms of computational complexity and accuracy. 

\subsection*{Outline.}
In \cref{sec:tensortopol}, we state our assumptions and show how computing the CPD of a tensor $\A$ is formulated as a system of polynomial equations. In \cref{sec:normalforms}, we make the connection with normal form methods explicit. That is, we describe how a \emph{pre-normal form} can be computed directly from the tensor $\A$ and how this allows us to reduce the above polynomial system to an eigenvalue problem. We explain how the approach generalizes so-called \emph{pencil-based algorithms} in \cref{sec_sub_pba}. This leads to a complete, high-level description of our algorithm \algoname{} in \cref{sec_sub_hnfalgorithm}. In \cref{sec:regorder3}, we study the regularity of the ideal associated to our polynomial system. These results are the starting point for our analysis of the complexity of \algoname{} in \cref{sec:complexity}. In \cref{sec:experiments}, we demonstrate the efficiency and accuracy of \algoname{} relative to the state of the art through several numerical experiments. The paper is completed in \cref{sec:conclusions} with some final conclusions.

\section{From tensor decomposition to polynomial equations} \label{sec:tensortopol}

In this section, we explain how tensor rank decomposition, under some restrictions, can be reduced to solving a polynomial system of equations whose coefficients are directly obtained from the tensor $\mathcal{A}$. The next two subsections state the restrictions under which the proposed algorithm operates. In \cref{subsec:polfromflat}, the polynomial system is constructed. We show that it gives a nice algebraic description of a certain projection of the rank-$1$ tensors appearing in $\mathcal{A}$'s decomposition. \Cref{sec_sub_highlevel} provides a pseudo-algorithm summarizing the main steps of the proposed numerical algorithm.

\subsection{Identifiability, flattenings, and the main assumption}

For the moment, let us assume $\mathcal{A} \in \C^{\ell+1}\otimes\C^{m+1}\otimes\C^{n+1}$ is a third-order tensor of rank $r$. The \algoname{} algorithm works under the following assumption.
\begin{assumption} \label{assum:vanishingideal}
The tensor $\A \in \C^{\ell+1} \otimes \C^{m+1} \otimes \C^{n+1}$ with $\ell \ge m \ge n > 0$ is generic of rank
\begin{align}\tag{R} \label{eqn_rank_condition}
 r \le \min\left\{ \ell+1, mn \right\}.
\end{align}
\end{assumption}
By \emph{generic} we mean that $\A$ is contained in a Zariski dense open subset of the set of all rank-$r$ tensors. We will describe this open subset more explicitly in \cref{lem:zerodim}. For now, we point out that in this open set, each tensor $\A$ is such that
\begin{enumerate}
\item[(i)] $\mathcal{A}$ is \textit{$r$-identifiable}, and 
\item[(ii)] the standard 1-\textit{flattening} $\A_{(1)}$ of the tensor $\A$ is of the same rank as $\A$.
\end{enumerate}
These are necessary conditions for our algorithm to work. We now briefly recall their meaning.

The condition (i) is usually very weak and only of a technical nature.
Recall that the set of all rank-$1$ tensors forms an \textit{algebraic variety}, i.e., the solution set of a system of polynomial equations, called the Segre variety $\mathcal{S}$. 
A rank-$r$ CPD of a tensor $\mathcal{A}$ is a set of $r$ rank-$1$ tensors whose sum is $\mathcal{A}$. The set of all such rank-$r$ CPDs is denoted by $\mathcal{S}^{[r]} = \{ \mathcal{X} \subset \mathcal{S} \mid |\mathcal{X}|=r \}$.
\textit{Tensor rank decomposition} consists of computing an element of the fiber of 
\[
 f: \mathcal{S}^{[r]} \to \C^{\ell+1} \otimes \C^{m+1} \otimes \C^{n+1},\quad \{\mathcal{A}_1, \ldots, \mathcal{A}_r\} \mapsto \mathcal{A}_1 + \cdots + \mathcal{A}_r
\]
at a rank-$r$ tensor $\mathcal{A}$. For brevity, we denote the image of $f$ by 
\[
\mathcal{S}_r = f(\mathcal{S}^{[r]}).
\]
The tensor $\mathcal{A}$ is called $r$-identifiable if the fiber $f^{-1}(\mathcal{A})$ contains exactly one element. That is, there is a unique set in $\mathcal{S}^{[r]}$ whose sum is $\mathcal{A}$. 
Generally, $r$-identifiability fails only on a strict closed subvariety of (the Zariski closure of) $\mathcal{S}_r$; see  \cite{CO2012,COV2014,COV2017,BCO2013}. This property is called the \textit{generic $r$-identifiability of $\mathcal{S}$}.
These results entail that $r$-identifiability fails only on a subset of $\mathcal{S}_r$ of Lebesgue measure zero if the rank $r$ and dimensions $(\ell+1,m+1,n+1)$ satisfy some very weak conditions; see \cite[Section 1]{COV2014} and \cite[Section 3]{COV2017} for more details and statements for higher-order tensors as well. 
If $\mathcal{S}_r$ is generically $r$-identifiable, then there is a Zariski-open subset of the closure of $\mathcal{S}_r$ so that $f^{-1}$ is an analytic, bijective \textit{tensor decomposition} function. We aspire to solve the tensor decomposition problem only in this well-behaved setting.

The condition (ii) is more restrictive, but allows us to tackle the tensor decomposition problem using only efficient linear algebra.
Recall that the standard $1$-flattening of $\mathcal{A} \in \C^{\ell+1} \otimes \C^{m+1} \otimes \C^{n+1}$ consists of interpreting $\mathcal{A}$ as the matrix $\mathcal{A}_{(1)} \in \C^{(\ell+1) \times (m+1)(n+1)}$. For a rank-$1$ tensor $\mathcal{A}=\alpha \otimes \beta \otimes \gamma$ this identification is defined by 
\[
 \mathcal{A}_{(1)} = \alpha (\beta \otimes \gamma)^\top,
\]
and the general case follows by linearity. The tensor product\footnote{The tensor product is defined uniquely by the linear space into which it maps by universality \cite{Greub1978}, so we do not make a distinction in notation.} in the foregoing expression is also called the \textit{reverse-order Kronecker product} 
\[
 \otimes : \C^{m+1} \times \C^{n+1} \to \C^{(m+1)(n+1)},\; (\beta, \gamma) \mapsto [ \beta_{i} \gamma_{j} ]_{(i,j)},
\]
where the standard bases were assumed for these Euclidean spaces and the indices $(i,j)$ are sorted by the reverse lexicographic order. Note that the $1$-flattening is easy to compute when $\mathcal{A}$ is given in coordinates relative to the standard tensor product basis. In that case it suffices to reshape the coordinate array to an $(\ell+1) \times (m+1)(n+1)$ array (e.g., as in Julia's or Matlab's \texttt{reshape} function).

\subsection{The reshaping trick}\label{subsec:reshapingtrick}
For dealing with higher-order tensors $\mathcal{A}\in\C^{n_1+1}\otimes\cdots\otimes\C^{n_d+1}$ with $d > 3$, we rely on the well-known reshaping trick. It consists of interpreting a higher-order tensor as a third-order tensor. While the approach described in the remainder of the paper could be applied directly to higher-order tensors as well, the range of ranks $r$ to which this version would apply is (much) more restrictive than the range obtained from reshaping. 

Recall that reshaping $\mathcal{A}$ to a third-order tensor $\mathcal{A}_{(I,J,K)}$ is a linear isomorphism
\begin{align}
 \C^{n_1+1} \otimes \cdots \otimes \C^{n_d+1} 
 &\simeq (\otimes_{i \in I} \C^{n_i+1}) \otimes (\otimes_{j\in J} \C^{n_j+1}) \otimes (\otimes_{k\in K} \C^{n_k+1}) \nonumber \\
 &\simeq \C^{\prod_{i\in I} (n_i+1)} \otimes \C^{\prod_{j\in J} (n_j+1)} \otimes \C^{\prod_{k\in K} (n_k+1)} \tag{I}\label{eqn_isomorphism}
\end{align}
that identifies $\alpha^1 \otimes \cdots\otimes \alpha^d$ with $(\otimes_{i\in I} \alpha^i) \otimes (\otimes_{j\in J} \alpha^j) \otimes (\otimes_{k\in K} \alpha^k)$, wherein $I \sqcup J \sqcup K$ partitions $\{1,\ldots,d\}$. The general case follows by linearity from the universal property \cite{Greub1978}.

It was shown in \cite[Section~7]{COV2017} that under some conditions, the unique CPD of $\mathcal{A}$ can be recovered from the CPD of a reshaping $\mathcal{A}_{(I,J,K)}$. Here, we exploit the following slightly more general result that requires no $r$-identifiability of $\mathcal{A}$.
\begin{lemma} \label{lem_technical_bound}
Let $\mathcal{A} \in \C^{n_1+1}\otimes\cdots\otimes\C^{n_d+1}$ be a rank-$r$ tensor. If there exists a partition $I\sqcup J\sqcup K$ of $\{1,\ldots,d\}$ such that the third-order reshaping $\mathcal{A}_{(I,J,K)} \in \C^{\ell+1} \otimes \C^{m+1}\otimes\C^{n+1}$ is $r$-identifiable, then $\mathcal{A}$ is $r$-identifiable and the CPD of $\mathcal{A}_{(I,J,K)}$ is the CPD of $\mathcal{A}$ under the isomorphism in \cref{eqn_isomorphism}.
\end{lemma}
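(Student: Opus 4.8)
The plan is to lean on the fact that the reshaping map $\rho$ of \cref{eqn_isomorphism} is a \emph{linear isomorphism} of the two tensor spaces which, by its very definition on pure tensors, sends a rank-one $d$-tensor $\alpha^1\otimes\cdots\otimes\alpha^d$ to the rank-one third-order tensor $(\otimes_{i\in I}\alpha^i)\otimes(\otimes_{j\in J}\alpha^j)\otimes(\otimes_{k\in K}\alpha^k)$. Thus $\rho$ maps the Segre variety $\mathcal S$ of $d$-way rank-one tensors \emph{into} (but, crucially, not \emph{onto}) the Segre variety $\mathcal S'$ of third-order rank-one tensors---which is exactly why identifiability of the reshaping has to be \emph{assumed} rather than deduced. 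Since $\rho$ is injective, the rule $\{\mathcal A_1,\dots,\mathcal A_r\}\mapsto\{\rho(\mathcal A_1),\dots,\rho(\mathcal A_r)\}$ is a well-defined \emph{injection} $\rho^{[r]}\colon\mathcal S^{[r]}\to(\mathcal S')^{[r]}$ (the images are still $r$ distinct rank-one tensors), and linearity of $\rho$ gives the commuting relation $f'\circ\rho^{[r]}=\rho\circ f$, where $f$ and $f'$ denote the summation maps of the two Segre varieties. With $\mathcal A_{(I,J,K)}=\rho(\mathcal A)$, this turns the statement into a short diagram chase.

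Next I would transfer identifiability back along $\rho$. Since $\mathcal A$ has rank $r$ it admits at least one rank-$r$ CPD $\mathcal X=\{\mathcal A_1,\dots,\mathcal A_r\}\in\mathcal S^{[r]}$, whose $r$ terms are necessarily distinct (two equal terms could be merged, dropping the rank below $r$). For an arbitrary $\mathcal Y\in f^{-1}(\mathcal A)$, the commuting relation places both $\rho^{[r]}(\mathcal X)$ and $\rho^{[r]}(\mathcal Y)$ in the fiber $(f')^{-1}(\mathcal A_{(I,J,K)})$; $r$-identifiability of $\mathcal A_{(I,J,K)}$ forces $\rho^{[r]}(\mathcal X)=\rho^{[r]}(\mathcal Y)$, and injectivity of $\rho^{[r]}$ then gives $\mathcal X=\mathcal Y$. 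Hence $f^{-1}(\mathcal A)=\{\mathcal X\}$, so $\mathcal A$ is $r$-identifiable, and the unique element of $(f')^{-1}(\mathcal A_{(I,J,K)})$ is precisely $\rho^{[r]}(\mathcal X)$, i.e.\ the image of the CPD of $\mathcal A$ under the reshaping.

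The one genuinely delicate point---and the step I expect to need the most care---is to confirm that this $\rho^{[r]}(\mathcal X)$ really is \emph{the CPD} of $\mathcal A_{(I,J,K)}$, that is, that $\rank\mathcal A_{(I,J,K)}=r$ rather than something smaller (otherwise a length-$r$ decomposition would not be minimal). Since $\rho$ carries any length-$r$ decomposition of $\mathcal A$ to a length-$\le r$ decomposition of $\mathcal A_{(I,J,K)}$, reshaping cannot increase rank, so $\rank\mathcal A_{(I,J,K)}\le r$. On the other hand a tensor of rank $r'<r$ is never $r$-identifiable: a shortest decomposition can be turned into $r$ distinct rank-one summands in a continuum of ways (split one term into several non-proportional pieces, or adjoin cancelling pairs $w,-w$), so its fiber under $f'$ is infinite. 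Therefore $\rank\mathcal A_{(I,J,K)}=r$, which combined with the previous paragraph establishes both assertions of the lemma.
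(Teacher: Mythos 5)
Your proof is correct and follows essentially the same route as the paper's: reshape an arbitrary rank-$r$ CPD of $\mathcal{A}$, invoke uniqueness of the third-order CPD, and pull everything back along the injective, rank-one-preserving reshaping map. The differences are cosmetic---you package the argument as a commuting square $f'\circ\rho^{[r]}=\rho\circ f$ rather than the paper's explicit permutations and scalars, and you additionally verify that $\operatorname{rank}(\mathcal{A}_{(I,J,K)})=r$, a point the paper leaves implicit.
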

\begin{proof}
Let $I\sqcup J\sqcup K$ be such a partition and assume that $\mathcal{A}_{(I,J,K)}$'s unique CPD is $\mathcal{A}_{(I,J,K)}=\sum_{q=1}^r \alpha_q\otimes\beta_q\otimes\gamma_q$. Let $\mathcal{A} = \sum_{q=1}^r \alpha_q^1 \otimes \cdots \otimes \alpha_q^d$ be an arbitrary rank-$r$ CPD. Then, 
\[
 \mathcal{A}_{(I,J,K)} = \sum_{q=1}^r (\otimes_{i\in I} \alpha_q^i) \otimes ( \otimes_{j\in J} \alpha_q^j ) \otimes (\otimes_{k \in K} \alpha_q^k)
\]
because of \cref{eqn_isomorphism}. Since $\mathcal{A}_{(I,J,K)}$'s rank-$r$ CPD is unique by assumption, it follows there is some permutation $\pi$ and scalars $\rho_q \sigma_q \tau_q = 1$ such that we have, for all $q=1,\ldots,r$,  
\[
\rho_q \alpha_{\pi_q} = \otimes_{i\in I} \alpha_q^i,\quad 
\sigma_q \beta_{\pi_q} = \otimes_{j \in J} \alpha_q^j, \text{ and }\quad 
\tau_q \gamma_{\pi_q} = \otimes_{k \in K} \alpha_q^k.
\]
This implies that \textit{the} CPD of $\mathcal{A}_{(I,J,K)}$ under the linear isomorphism \cref{eqn_isomorphism} results in \emph{a} CPD of $\mathcal{A}$. 

It remains to show that it is the unique one. Denote the set of $r$ rank-$1$ summands in any two CPDs of $\mathcal{A}$ by $\{ \mathcal{T}_1, \ldots, \mathcal{T}_r \}$ and $\{\mathcal{T}_1', \ldots, \mathcal{T}_r'\}$ respectively. The previous paragraph showed that any CPD of $\mathcal{A}$ implies a CPD of $\mathcal{A}_{(I,J,K)}$. 
Since $\mathcal{A}_{(I,J,K)}$'s CPD is unique, there exist permutations $\pi$ and $\pi'$ such that for all $q=1,\ldots,r,$
\[
 (\otimes_{i \in I} \alpha_q^i) \otimes (\otimes_{j\in J} \beta_q^i) \otimes (\otimes_{k\in K} \gamma_q^k) = (\mathcal{T}_{\pi_q})_{(I,J,K)} = (\mathcal{T}_{\pi_q'}')_{(I,J,K)}.
\]
Since reshaping is injective and sends rank-$1$ tensors in the domain to rank-$1$ tensors in the codomain, we conclude that $\{\mathcal{T}_1,\ldots,\mathcal{T}_r\} = \{\mathcal{T}_1', \ldots, \mathcal{T}_r'\}$. That is, all of $\mathcal{A}$'s CPDs must consist of the same rank-$1$ terms, which is exactly the definition of $r$-identifiability.
\end{proof}

For decomposing higher-order tensors $\mathcal{A}$ of rank $r$ via the reshaping trick, we proceed as follows:
\begin{enumerate}
 \item Find $I \sqcup J \sqcup K =\{1,\ldots,d\}$ such that $\mathcal{A}_{(I,J,K)}$ is $r$-identifiable and its rank satisfies \eqref{eqn_rank_condition}.
 \item Compute the CPD $\mathcal{A}_{(I,J,K)} = \sum_{q=1}^r \alpha_q \otimes \beta_q \otimes \gamma_q$, e.g., as in \cref{alg:pseudoalg}.
 \item Viewing the recovered vectors as $\alpha_q \in \otimes_{i\in I} \C^{n_i+1}$, $\beta_q \in \otimes_{j\in J} \C^{n_j+1}$, and $\gamma_q \in \otimes_{k\in K} \C^{n_k+1}$, a CPD of $\mathcal{A}$ is $\mathcal{A}_1+\cdots+\mathcal{A}_r$ where the tensors $\mathcal{A}_q$ are isomorphic to $\alpha_q \otimes \beta_q \otimes \gamma_q$ under \cref{eqn_isomorphism}.
\end{enumerate}
Note that the rank-$1$ decompositions in the third step can be computed for example with a sequentially truncated higher-order singular value decomposition \cite{VVM2012}, as in our implementation, or with a cross approximation \cite{OST2008}. 

Because of the reshaping trick, we will henceforth describe our approach only for $d=3$.

\subsection{Polynomial systems defined by flattenings} \label{subsec:polfromflat}
Having delineated the range \eqref{eqn_rank_condition} in which the proposed \algoname{} algorithm will work, we continue by describing it.
Our strategy to compute the tensor rank decomposition of 
\begin{align} \tag{A} \label{eq:rankdecomp}
 \A = \sum_{i=1}^r \alpha_i \otimes \beta_i \otimes \gamma_i ~ \in ~ \C^{\ell+1} \otimes \C^{m+1} \otimes \C^{n+1}
\end{align} 
is to compute the points 
\[
\z_i = (\beta_i, \gamma_{i}) \in \C^{m+1} \times \C^{n+1}, \quad i = 1, \ldots, r,
\]
first as the solution of a system of (low-degree) polynomial equations. Thereafter we compute the $\alpha_{i}$'s with efficient linear algebra by plugging $\beta_{i}$ and $\gamma_{i}$ into \eqref{eq:rankdecomp} and solving the resulting linear system of equations. Indeed, since
\begin{align} \label{eqn_flattening}
 \A_{(1)} = \begin{bmatrix} \alpha_1 & \cdots & \alpha_r \end{bmatrix} \begin{bmatrix} \beta_1 \otimes \gamma_1 & \cdots & \beta_r \otimes \gamma_r \end{bmatrix}^\top
\end{align}
and \cref{assum:vanishingideal} guarantees that the second $r \times (m+1)(n+1)$ matrix is of full rank $r$ (see \cref{lem:zerodim} below), so it has a right inverse. Applying the latter on the right to $\A_{(1)}$ yields the corresponding $\alpha_i$'s.
Note that it suffices to compute the points $\beta_{i}$ and $\gamma_{i}$ up to a nonzero scaling factor. Therefore, it is natural to consider our problem in \emph{complex projective space}.

Recall that the $k$-dimensional complex projective space $\PP^{k}$ is the space of equivalence classes $[x] = \{ \lambda x \mid \lambda \in \C \setminus \{0\} \}$ for $x \in \C^{k+1} \setminus \{0\}$. 
The entries of the vector $x = (x_0, \ldots, x_k) \in \C^{k+1} \setminus \{0\}$ are called \textit{homogeneous coordinates} of $[x]$. With a slight abuse of notation we will write $x = (x_0 : \cdots : x_k) \in \PP^k$ for both the equivalence class $[x]$ and a set of homogeneous coordinates $x \in \C^{k+1} \setminus \{0\}$.

The proposed \algoname{} algorithm exploits the fact that the points $\z_i = (\beta_i, \gamma_i) \in \PP^m \times \PP^n$ are defined by algebraic relations on $X = \PP^{m} \times \PP^n$ that can be computed directly from the tensor $\A$. Such algebraic relations are \textit{homogeneous polynomials} in a \textit{bi-graded polynomial ring}. In this context, it is natural to think of \emph{degrees} as 2-tuples $(d,e) \in \N^2$, where $d$ is the degree in the variables corresponding to $\PP^m$ and $e$ is the degree in the variables corresponding to $\PP^n$. Note that this differs from the more familiar setting where the degree is a natural number $d \in \N$. For more on multi-graded rings, see \cite{miller2005combinatorial}. The flexibility provided by this \emph{bi-graded} setting reduces the complexity of our algorithm.
Concretely, we work in the $\N^2$-graded polynomial ring 
\begin{equation} \label{eq:grading}
S = \C[x_{0}, \ldots, x_{m}, y_{0}, \ldots, y_{n}] = \bigoplus_{(d,e) \in \N^2} S_{(d,e)},\; \text{where} \quad S_{(d,e)} = \bigoplus_{|a| = d, |b|=e} \C \cdot x^{a} y^{b}.
\end{equation}
Here we used the notation $x^{a} = x_{0}^{a_{0}} \cdots x_{m}^{a_{m}}$ and $|a| = a_{0} + \cdots + a_{m}$ for $a = (a_{0}, \ldots, a_{m}) \in \N^{m + 1}$, and analogously for $b$. The \emph{graded pieces} $S_{(d,e)}$ are vector spaces over $\C$. The variables $x$ correspond to homogeneous coordinates on the factor $\PP^{m}$ and in $S_{(d,e)}$, $d$ is the degree in the $x$-variables. Analogously, elements of $S_{(d,e)}$ have degree $e$ in the $y$-variables, which are related to $\PP^n$. 

An element $f \in S$ is called \emph{homogeneous} if $f \in S_{(d,e)}$ for some $(d,e)\in \N^2$.
The key reason that the ring $S$, with its grading \eqref{eq:grading}, is naturally associated to $X$ is that \emph{homogeneous elements $f \in S_{(d,e)}$ have well-defined zero sets on $X$}. By this we mean that for $f \in S_{(d,e)}$ and for any $\z = (x , y) \in X$, $f(x,y) = 0$ is independent of the choice of homogeneous coordinates. Indeed, this follows from 
\begin{equation} \label{eq:actiononS}
f(\lambda x , \mu y) = \lambda^d  \mu^e f(x,y) \quad \text{for} \quad \lambda, \mu \in \C \setminus \{0\}.
\end{equation}
Therefore, whenever $f$ is homogeneous it makes sense to write $f(\z) = 0$ if $f$ vanishes on some set of homogeneous coordinates for $\z$, and to define the \emph{subvariety of $X$ defined by $f$} as $V_X(f) = \{ \z \in X ~|~ f(\z) = 0\}$. For a homogeneous ideal $I \subset S$ (i.e., $I$ is generated by homogeneous elements with respect to the grading \eqref{eq:grading}), we denote the \emph{subvariety of $X$ corresponding to $I$} by 
\[
V_X(I) = \{ \z \in X ~|~ f(\z) = 0, \text{ for all homogeneous } f \in  I \}.
\]
That is, $V_X(I)$ contains the solutions of the polynomial system defined by the simultaneous vanishing of all homogeneous equations $f \in I$. 

\begin{example}
The polynomials $f_1, \ldots, f_5$ in \cref{ex:running1} are homogeneous elements of degree $(1,1)$ in $S$, i.e., $f_i \in S_{(1,1)}$. They generate the homogeneous ideal $I$, whose corresponding subvariety is $V_X(I) = \{\z_1,\ldots,\z_4\} \subset X = \PP^2 \times \PP^2$.
\end{example}

With this notation in place, we turn back to the tensor $\A$ and show that, under suitable assumptions, $V_X(I) = \{\z_1,\ldots,\z_r\}$ for some homogeneous ideal $I$ defined by $\A$. This will generalize the procedure in \cref{ex:running1}. 
Consider the flattening $\A_{(1)}$ from \cref{eqn_flattening}. This flattening has a natural interpretation as a $\C$-linear map 
\[
 \A_{(1)} :~ S_{(1,1)} ~\longrightarrow~ \C^{\ell+1}.
\]
Indeed, we can identify the space $\C^{m+1} \otimes \C^{n+1}$ with the graded piece $S_{(1,1)}$ of degree $(1,1)$ by %sending 
\begin{equation} \label{eq:monbasis}
e_{k} \otimes e_{l} ~ \longmapsto~ x_k y_l, \qquad 0 \leq k \leq m,\; 0 \leq l \leq n,
\end{equation}
where $e_{k}$ represents the $(k+1)$-st standard basis vector of $\C^{m + 1}$, and analogously for $e_{l}$ and $\C^{n+1}$.
For brevity, we also write $\A_{(1)}$ for a matrix representation of the map $\A_{(1)}$, where the standard basis of monomials \eqref{eq:monbasis} is used for $S_{(1,1)}$. The $\ell+1$ rows of $\A_{(1)}$ are elements of the dual space $(S_{(1,1)})^\vee$ represented in its dual basis 
\begin{equation} \label{eq:dualbasis}
\left \{ \d_{kl} = \frac{\d^2}{\d {x_k} \d{y_l}} \mid 0 \leq k \leq m,~  0 \leq l \leq n \right \}.
\end{equation} 
From this vantage point, the rows of the second factor in \cref{eqn_flattening} are
\begin{equation*}
(\beta_i \otimes \gamma_i)^\top = \sum_{\substack{0 \leq k \leq m \\ 0 \leq l \leq n}} \beta_{i,k} \gamma_{i,l} \d_{kl} ~ \in S_{(1,1)}^\vee.
\end{equation*}
It is clear that for any $f \in S_{(1,1)}$, we have $(v \otimes w)^\top (f) = f(v,w)$. 

Let $f_1, \ldots, f_s \in S_{(1,1)}$ be a $\C$-basis for the kernel $\ker \A_{(1)}$.
The $f_i$ generate a homogeneous ideal $I = \ideal{\ker \A_{(1)}} = \ideal{f_1,\ldots,f_s} \subset S$. The following lemma characterizes precisely what we mean by \emph{generic} in \cref{assum:vanishingideal}.

\begin{lemma} \label{lem:zerodim}
Consider the space $\C^{\ell+1} \otimes \C^{m+1} \otimes \C^{n+1}$ with $\ell \ge m \ge n > 0$. 
Then, for all $r \le \min\left\{ \ell+1, mn \right\}$
there exists a Zariski dense open subset $U \subset \mathcal{S}_r$ such that for all tensors $\A \in U$, 
\begin{enumerate}
\item[(i)] $\A$ is $r$-identifiable,
\item[(ii)] the flattening $\mathcal{A}_{(1)}$ has rank $r$,
\item[(iii)] the ideal $I = \ideal{\ker \A_{(1)}} \subset S$ is such that $V_X(I)$ consists of the points $\{\z_1, \ldots, \z_r\}$, and these points have multiplicity one.
\end{enumerate}
\end{lemma}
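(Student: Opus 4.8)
The plan is to exhibit, for each of the three asserted properties, a dense Zariski-open subset of $\mathcal{S}_r$ on which it holds, and to take $U$ to be their intersection. Throughout I use that $\mathcal{S}_r$ is the image of the dominant parametrization $(\alpha_i,\beta_i,\gamma_i)_{i=1}^r\mapsto\sum_{i=1}^r\alpha_i\otimes\beta_i\otimes\gamma_i$, so it suffices to verify each property for generic tuples $(\alpha_i,\beta_i,\gamma_i)$. Property (i) I would not reprove: generic $r$-identifiability of third-order tensors is known, and the only thing to check is that the bound $r\le\min\{\ell+1,mn\}$ keeps us clear of the short, explicitly classified list of non-identifiable formats, the constraint $r\le\ell+1$ being precisely what excludes the borderline cases occurring at or near the generic rank. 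This locus is Zariski-open by the definition of generic identifiability.

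For (ii) I would argue from the factorization $\A_{(1)}=[\alpha_1\mid\cdots\mid\alpha_r]\,[\beta_1\otimes\gamma_1\mid\cdots\mid\beta_r\otimes\gamma_r]^\top$. The left factor has rank $r$ for generic $\alpha_i$ because $r\le\ell+1$; the right factor has rank $r$ for generic $(\beta_i,\gamma_i)$ because $r\le mn<(m+1)(n+1)$ and the Segre variety $\Sigma\subset\PP^{(m+1)(n+1)-1}$ is nondegenerate, so any $r$ of its general points are linearly independent. Hence $\A_{(1)}$ has rank exactly $r$ (it is in any case a sum of $r$ rank-one matrices), and since $\rank\A_{(1)}\ge r$ is a Zariski-open condition this gives the required dense open set.

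For (iii) I would work on the open set from (ii), further refined so that $\alpha_1,\dots,\alpha_r$ are linearly independent. Since $\A_{(1)}f=\sum_i f(\z_i)\,\alpha_i$ for $f\in S_{(1,1)}$, we get $\ker\A_{(1)}=\{f\in S_{(1,1)}\mid f(\z_i)=0,\ i=1,\dots,r\}$, so certainly $\{\z_1,\dots,\z_r\}\subseteq V_X(I)$. For the reverse inclusion, let $\z=(x,y)\in V_X(I)$. Every $f\in\ker\A_{(1)}$ vanishes at $\z$, so $(x\otimes y)^\top$ lies in the span of the rows $(\beta_i\otimes\gamma_i)^\top$ of $\A_{(1)}$; equivalently $x\otimes y=\sum_i c_i\,\beta_i\otimes\gamma_i$ for some $c_i$, with $c_{i_0}\ne 0$ for at least one $i_0$. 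If $\z\notin\{\z_1,\dots,\z_r\}$, I would solve this relation for $\beta_{i_0}\otimes\gamma_{i_0}$ and substitute into $\A=\sum_i\alpha_i\otimes\beta_i\otimes\gamma_i$; using that the coefficient vectors $\alpha_i-c_{i_0}^{-1}c_i\,\alpha_{i_0}$ ($i\ne i_0$) are nonzero since the $\alpha_i$ are in general position, this exhibits $\A$ as a sum of $r$ nonzero rank-one tensors one of which, namely $c_{i_0}^{-1}\alpha_{i_0}\otimes x\otimes y$, is not among $\alpha_1\otimes\beta_1\otimes\gamma_1,\dots,\alpha_r\otimes\beta_r\otimes\gamma_r$ because its $\PP^m\times\PP^n$-part is $\z\notin\{\z_i\}$. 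As $\A$ has rank $r$, this is a second rank-$r$ CPD, contradicting the $r$-identifiability supplied by (i); hence $V_X(I)=\{\z_1,\dots,\z_r\}$ as a set.

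It remains to prove that $V_X(I)$ is reduced at each $\z_i$, which is the step requiring the most care. I would pass to the Segre embedding $X\hookrightarrow\PP^N$, $N=(m+1)(n+1)-1$: $(1,1)$-forms become hyperplanes, the $\z_i$ become smooth points $v_i$ of the Segre variety $\Sigma$, and $V_X(I)$ becomes $\Sigma\cap L$ with $L=\langle v_1,\dots,v_r\rangle\cong\PP^{r-1}$ (the $v_i$ being independent as $r\le N+1$). Since $\Sigma$ is smooth at $v_i$, the scheme $\Sigma\cap L$ is reduced there as soon as $\T_{v_i}\Sigma\cap L=\{v_i\}$, where $\T_{v_i}\Sigma$ denotes the embedded tangent $\PP^{m+n}$: the linear forms cutting out $L$ then span $\m_{v_i}/\m_{v_i}^2$ of $\OO_{\Sigma,v_i}$, whence $\OO_{\Sigma\cap L,v_i}=\C$ by Nakayama. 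Finally, $\T_{v_i}\Sigma\cap L=\{v_i\}$ holds for generic $v_i$ because $\langle\T_{v_i}\Sigma,v_1,\dots,\widehat{v_i},\dots,v_r\rangle$ then has the expected dimension $(m+n)+(r-1)$: one adds the $r-1$ generic points $v_j$ ($j\ne i$) to $\T_{v_i}\Sigma$ one at a time, each raising the dimension by $1$ because $\Sigma$ is nondegenerate, and $(m+n)+(r-1)\le N$ because $r\le mn+1$, so the span never fills $\PP^N$ prematurely. Taking $U$ to be the intersection of the dense open sets above — together with the loci where the $\alpha_i$ are independent and where the preceding dimension count holds for every $i$ — completes the argument. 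The principal obstacle is this last reducedness step; the set-theoretic part of (iii) and all of (ii) are elementary linear algebra, and (i) is quoted from the literature.
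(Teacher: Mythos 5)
Your proof is correct, but it takes a genuinely different route from the paper's, which disposes of the lemma in three lines by citation: items (i)--(ii) are quoted from the proof of Proposition 8.1 in Bocci--Chiantini--Ottaviani, and item (iii) is precisely the scheme-theoretic Trisecant Lemma (Russo, Proposition 1.4.3) applied to the Segre variety. You instead reprove everything except generic $r$-identifiability. Your argument for (ii) is the standard one. For (iii) you split the claim in two: the set-theoretic inclusion $V_X(I)\subseteq\{\z_1,\dots,\z_r\}$ you deduce from $r$-identifiability by manufacturing a second rank-$r$ CPD out of any extra point of $\Sigma\cap L$, and reducedness you obtain from the transversality condition $\T_{v_i}\Sigma\cap L=\{v_i\}$, verified by a span-dimension count that is exactly where the bound $r\le mn=N-(m+n)$ enters. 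Both halves are sound (the Grassmann formula converts your dimension count into the tangency statement, and Nakayama finishes the local computation), so in effect you have reproved the special case of the Trisecant Lemma that the paper invokes. Your route buys two things: it makes the role of the numerical hypothesis transparent, and your set-theoretic step in fact shows that (i)+(ii) already force $V_X(I)=\{\z_1,\dots,\z_r\}$ as sets (since rank $r$ of the flattening forces both factor matrices in \cref{eqn_flattening} to have rank $r$), which bears on the question raised after \cref{lem:zerodim} of whether (i)+(ii) imply (iii) --- only multiplicity one remains open there. What the citation route buys is brevity and a version of (iii) valid for general point configurations independently of identifiability. The one soft spot is your gloss on (i): generic $r$-identifiability in this range is not a matter of avoiding a short classified list of exceptional formats (no such complete classification exists for general third-order formats) but is the content of the cited unbalanced-case theorem; since you, like the paper, take (i) from the literature, this does not affect correctness.
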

\begin{proof}
Items (i)-(ii) hold on a dense open subset $U_1$ by \cite[Proof of Proposition 8.1]{BCO2013}. Item (iii) holds on a dense open subset $U_2$ by the scheme-theoretic version of the Trisecant Lemma \cite[Proposition 1.4.3]{russo2016geometry}. Our subset $U$ is $U_1 \cap U_2$. 
\end{proof}

Point (iii) in \cref{lem:zerodim} is a technicality we will need in \cref{sec:normalforms} in order to invoke the results from \cite[Section 5.5]{telen2020thesis}. For the reader who is familiar with algebraic geometry, we included a consequence of \cref{lem:zerodim} relating $I$ to the vanishing ideal of $V_X(I)$ in \cref{cor:satisvan}. 

We were not able to construct an example for which (i)+(ii) are satisfied, but (iii) is not. This raises the question whether (i)+(ii) implies (iii), and if not, how our method extends to counterexamples. 

\subsection{The high-level algorithm} \label{sec_sub_highlevel}
We conclude the section by presenting a high-level pseudo-algorithm for computing the tensor rank decomposition \eqref{eq:rankdecomp} of $\A$. This is presented as \cref{alg:pseudoalg}. Its steps summarize the discussion up to this point. 

\begin{algorithm} 
\caption{Compute the tensor rank decomposition of $\A \in \C^{\ell+1}  \otimes  \C^{m+1} \otimes\C^{n+1}$}
\begin{algorithmic}[1]
\REQUIRE $\A$ satisfies \cref{assum:vanishingideal}.
\STATE $\A_{(1)} \leftarrow$ the $(\ell+1) \times (m+1)(n+1)$ $1$-flattening of $\A$ 
\STATE $\{f_1,\ldots,f_s\} \leftarrow$ a $\C$-basis for $\ker \A_{(1)}$ \label{line_Cbasisker}
\STATE $I \leftarrow$ the ideal $\ideal{f_1,\ldots,f_s} \subset S$
\STATE $\{ (\beta_i,\gamma_i) \}_{i=1, \ldots,r} \leftarrow$ homogeneous coordinates for $\{ \z_i \}_{i=1,\ldots,r} = V_X(I) \subset X$ \label{line:solvepolsys}
\STATE $ (\alpha_1, \ldots, \alpha_r) \leftarrow$ solve the linear system defined by \cref{eqn_flattening}
\RETURN $\{ (\alpha_i,\beta_i,\gamma_i) \}_{i=1, \ldots,r}$
\end{algorithmic}
 \label{alg:pseudoalg}
\end{algorithm}

\begin{remark}
In a practical implementation of \cref{alg:pseudoalg}, the necessary conditions from \cref{lem:zerodim} would be handled as follows. The condition (i) that $\mathcal{A}$ is $r$-identifiable would be treated as a non-checked precondition. After obtaining the algorithm's output, the user should verify that $\mathcal{A}\approx\sum_{i=1}^r \alpha_i\otimes\beta_i\otimes\gamma_i$. If this holds, then specific $r$-identifiability can be checked using a posteriori certifications such as those in \cite{COV2017,Kruskal1977,DdL2013b,domanov2017canonical,SB2000}.
In \cref{alg:pseudoalg}, condition (ii) is checked in line \ref{line_Cbasisker} and condition (iii) in line \ref{line:solvepolsys}, see \cref{sec_sub_hnfalgorithm}.
\end{remark}

The main focus of this paper is dealing with step \ref{line:solvepolsys} of \cref{alg:pseudoalg}. We will employ a state-of-the-art method for solving polynomial equations on $X$, based on \emph{homogeneous normal forms}. This strategy is described in the next section.

\section{From polynomial equations to eigenvalues} \label{sec:normalforms}
In this section, we employ tools from computational algebraic geometry for computing points in a product of projective spaces in step \ref{line:solvepolsys} of \cref{alg:pseudoalg}. 
The next subsection summarizes the relevant results in the present setting. We show in \cref{sec_sub_mainthm} that the solutions can be obtained from an eigenvalue problem defined by \emph{pre-normal forms}. How to compute the latter is explained in \cref{sec_sub_prenormalform}. Thereafter, in \cref{sec_sub_pba}, we demonstrate that so-called \textit{pencil-based algorithms} for decomposing tensors of very low rank are closely related to our \algoname{} algorithm. The full algorithm for performing step \ref{line:solvepolsys} of \cref{alg:pseudoalg} is presented in \cref{sec_sub_hnfalgorithm}. Finally, we conclude this section with some examples.

\subsection{The eigenvalue theorem} \label{sec_sub_mainthm}
Our algorithm is built on a multi-homogeneous version of the \emph{eigenvalue theorem} (\cref{thm:MHNF} below), which allows us to find solutions of systems of polynomial equations via eigenvalue computations. Behind this is the theory of \emph{homogeneous normal forms}. In our context, these are $\N^2$-graded versions of \emph{truncated normal forms}, introduced in \cite{telen2018solving}, and special cases of the more general toric homogeneous normal forms used by \cite{telen2019numerical,bender2020toric} and formally introduced in \cite{telen2020thesis}. For our purpose, it suffices to work with slightly simpler objects, called \emph {pre-normal forms}, so we use homogeneous normal forms only implicitly. 
For full proofs and more details we refer to \cite[Section 5.5.4]{telen2020thesis}.

Consider the space $X = \PP^m \times \PP^n$ and its associated ring $S$, which is the $\N^2$-graded polynomial ring from \eqref{eq:grading}.
Let $I \subset S$ be a homogeneous ideal. The ideal $I$ inherits the grading on $S$: 
\[ 
I = \bigoplus_{(d,e) \in \N^2} I_{(d,e)}, \qquad \text{where } \qquad I_{(d,e)} = I \cap S_{(d,e)}
\]
is a $\C$-subspace of $S_{(d,e)}$, and the quotient ring 
\[
S/I = \bigoplus_{(d,e) \in \N^2} (S/I)_{(d,e)}= \bigoplus_{{(d,e)} \in \N^2}  S_{(d,e)}/ I_{(d,e)}
\]
(these are quotients of vector spaces) similarly inherits this grading. 
Our objective is to compute a set of homogeneous coordinates of the points in $V_X(I)$ by using linear algebra computations. To accomplish this, it is necessary to work in graded pieces of $S$, $I$, and $S/I$. Let $M$ denote either of the latter.
The \emph{(multi-graded) Hilbert function} $\HF_M : \N^2 \rightarrow \N$ is given by 
\[
\HF_M(d,e) = \dim_\C M_{(d,e)}
\]
and keeps track of the dimension of the vector space $M_{(d,e)}$. Note that for a homogeneous ideal $I \subset S$ and $(d,e) \in \N^2$, we have 
$
\HF_{S/I}(d,e) = \HF_S(d,e) - \HF_I(d,e)$.

\begin{example} \label{ex:hfS}
The Hilbert function of the ring $S$ is given explicitly by 
\[
\HF_S(d,e) = \dim_\C S_{(d,e)} = \begin{pmatrix}
m + d \\ d
\end{pmatrix} \begin{pmatrix}
n + e \\ e
\end{pmatrix}  \quad \text{with} \quad (d,e) \in \N^2 . \qedhere\]
\end{example}

Before stating the main result of this section, \cref{thm:MHNF}, we present three auxiliary lemmas.  
\Cref{lem:h0,lem:hf} are well-known in commutative algebra. For lack of a precise reference and completeness, we included short proofs in \cref{app:proofs}.
\begin{lemma} \label{lem:h0}
Let $I \subset S$ be a homogeneous ideal such that $V_X(I)$ consists of $r < \infty$ points $\{\z_1, \ldots, \z_r\}$. For each $(d',e') \in \N^2$, there exists a homogeneous polynomial $h_0 \in S_{(d',e')}$ such that $V_X(h_0) \cap V_X(I) = \emptyset$. Equivalently, we can find $h_0 \in S_{(d',e')}$ such that $h_0(\z_i) \neq 0,\; i = 1, \ldots, r$. 
\end{lemma}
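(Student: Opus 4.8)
The plan is to prove \cref{lem:h0} by a standard general-position / avoidance argument in the bigraded coordinate ring, exploiting that we only need to avoid finitely many points.

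First I would reduce the claim to the existence of a single homogeneous polynomial of the prescribed bidegree $(d',e')$ that is nonzero at each of the finitely many points $\z_1,\dots,\z_r$. The two formulations in the statement are equivalent since, by the discussion around \cref{eq:actiononS}, a homogeneous $h_0 \in S_{(d',e')}$ has a well-defined vanishing locus on $X$, and $V_X(h_0)\cap V_X(I)=\emptyset$ is literally the condition $h_0(\z_i)\neq 0$ for all $i$. So it suffices to exhibit such an $h_0$.

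Next I would handle a single point, then combine. Fix $\z_i = (x^{(i)}, y^{(i)}) \in \PP^m\times\PP^n$. Since $x^{(i)}\neq 0$ in $\C^{m+1}$, some coordinate, say $x^{(i)}_{k_i}$, is nonzero; similarly some $y^{(i)}_{l_i}\neq 0$. Then the monomial $x_{k_i}^{d'} y_{l_i}^{e'} \in S_{(d',e')}$ does not vanish at $\z_i$. More useful is a linear-forms version: each point $\z_i$ imposes the single linear condition ``vanishing at $\z_i$'' on the finite-dimensional vector space $S_{(d',e')}$, and this condition is nontrivial (the monomial just constructed violates it), so the vanishing locus $H_i := \{ h \in S_{(d',e')} : h(\z_i) = 0\}$ is a proper linear subspace. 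A finite union of proper linear subspaces of a vector space over the infinite field $\C$ cannot be the whole space, hence there exists $h_0 \in S_{(d',e')}$ with $h_0 \notin \bigcup_{i=1}^r H_i$, i.e.\ $h_0(\z_i)\neq 0$ for all $i=1,\dots,r$. (Equivalently, one can take a generic $\C$-linear combination of the monomials $x_{k_i}^{d'}y_{l_i}^{e'}$, or argue that the map $S_{(d',e')} \to \C^r$, $h \mapsto (h(\z_1),\dots,h(\z_r))$, has image not contained in any coordinate hyperplane and therefore hits the open dense set where all coordinates are nonzero.)

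I do not anticipate a serious obstacle here; the only mild points to be careful about are that $S_{(d',e')}$ is a genuine finite-dimensional $\C$-vector space (immediate from \cref{ex:hfS}, and in any case $(d',e')\in\N^2$ so both binomial coefficients are finite and the space is spanned by finitely many monomials), that the evaluation ``$h(\z_i)$'' is well-defined only up to the scalar $\lambda^{d'}\mu^{e'}$ coming from the choice of homogeneous coordinates for $\z_i$ but its being zero or nonzero is coordinate-independent by \eqref{eq:actiononS}, and that $\C$ is infinite so the finite-union-of-proper-subspaces argument applies. Note the hypothesis $V_X(I)=\{\z_1,\dots,\z_r\}$ with $r<\infty$ is used only to guarantee finiteness of the set of points to avoid; the polynomial $h_0$ produced need have nothing to do with $I$ itself.
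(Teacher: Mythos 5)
Your argument is correct and is essentially the same as the paper's: the paper's proof also observes that the polynomials of bidegree $(d',e')$ vanishing at $\z_i$ form a hyperplane $H_i$ in the finite-dimensional $\C$-vector space $S_{(d',e')}$ and takes $h_0$ in the complement of $H_1\cup\cdots\cup H_r$. Your proposal is slightly more detailed in that it explicitly verifies the evaluation functional at each $\z_i$ is nonzero (via the monomial $x_{k_i}^{d'}y_{l_i}^{e'}$), a point the paper leaves implicit.
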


\begin{lemma} \label{lem:hf}
Let $I \subset S$ be a homogeneous ideal such that $V_X(I)$ consists of $r < \infty$ points, each with multiplicity one. There exists $(d,e) \in \N^2$ with $(d,e) \neq (1,1)$ and $(d-1,e-1) \geq (0,0)$ (entry-wise) such that $\HF_{S/I}(d,e) = r$. 
\end{lemma}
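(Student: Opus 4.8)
The plan is to show that the multi-graded Hilbert function of $S/I$ stabilizes to the value $r$ once both degree parameters are large enough, and then to pick a bidegree $(d,e)$ in that stable range with $(d,e) \neq (1,1)$ and $d,e \geq 1$. First I would recall the standard fact that for a $0$-dimensional subscheme $Z = \{\z_1,\dots,\z_r\} \subset X = \PP^m \times \PP^n$, each point appearing with multiplicity one, the homogeneous coordinate ring is asymptotically governed by the product of $r$ copies of the residue fields. Concretely, I would consider the evaluation map $\operatorname{ev}_{(d,e)} \colon S_{(d,e)} \to \C^r$ sending $f \mapsto (f(\z_1)/h_0(\z_1), \dots, f(\z_r)/h_0(\z_r))$ for a fixed $h_0 \in S_{(d,e)}$ not vanishing at any $\z_i$ — such an $h_0$ exists by \cref{lem:h0}. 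The kernel of $\operatorname{ev}_{(d,e)}$ is exactly $I_{(d,e)}$ when $I$ is the (saturated) vanishing ideal; in our setting $I$ need not be saturated, but I would instead argue with the saturation $I^{\mathrm{sat}}$ (whose vanishing locus is still $Z$) and observe that $I_{(d,e)} = I^{\mathrm{sat}}_{(d,e)}$ for $(d,e)$ large, so $\HF_{S/I}(d,e) = \dim_\C \operatorname{im} \operatorname{ev}_{(d,e)}$ there.

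The two things to establish are: (a) $\operatorname{ev}_{(d,e)}$ is surjective for all sufficiently large $(d,e)$, giving $\HF_{S/I}(d,e) \geq r$; and (b) $\HF_{S/I}(d,e) \leq r$, which is immediate once we know $\ker \operatorname{ev}_{(d,e)} \supseteq I_{(d,e)}$ with equality in the stable range. For (a), surjectivity follows by a separation argument: for each $i$ I can find (by \cref{lem:h0} applied to the reduced subscheme $Z \setminus \{\z_i\}$, or directly by choosing linear forms on each $\PP$-factor separating the points) a homogeneous polynomial $g_i$ of some bidegree $(d_i, e_i)$ with $g_i(\z_i) \neq 0$ and $g_i(\z_j) = 0$ for $j \neq i$; multiplying each $g_i$ by a suitable monomial of complementary bidegree (using that $X$ has no base-point issues — a monomial of any bidegree $(d,e)$ with $d,e$ large can be chosen nonzero at a given point) lifts all the $g_i$ to a common bidegree, and their images span $\C^r$. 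This shows $\HF_{S/I}(d,e) = r$ for all $(d,e) \geq (d_0, e_0)$ for some threshold $(d_0,e_0)$ with $d_0, e_0 \geq 1$ (we may always enlarge the threshold so that both coordinates are at least $1$).

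Finally, from the existence of \emph{a} bidegree $(d_0,e_0)$ with $d_0,e_0\ge 1$ and $\HF_{S/I}(d_0,e_0)=r$, I would conclude: if $(d_0,e_0)\ne(1,1)$ we are done since $(d_0-1,e_0-1)\ge(0,0)$; if $(d_0,e_0)=(1,1)$ then stability gives $\HF_{S/I}(2,2)=r$ as well (since $(2,2)\ge(1,1)=(d_0,e_0)$), and $(2,2)\ne(1,1)$ with $(1,1)\ge(0,0)$, so $(d,e)=(2,2)$ works. Either way we obtain the claimed $(d,e)$.

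I expect the main obstacle to be the interplay between $I$ and its saturation: one must be careful that $\HF_{S/I}(d,e) = r$ rather than something larger coming from the non-saturated part of $I$ at small degrees — but this is precisely why the statement only asserts existence of \emph{some} large enough $(d,e)$, where $I$ and $I^{\mathrm{sat}}$ agree by the definition of saturation (equivalently, by multi-graded Castelnuovo–Mumford regularity). A secondary technical point is making the "lift monomials to a common bidegree" step precise on a product of projective spaces, but this is routine since the multiplicative structure of $S$ lets us multiply by $x_k^a y_l^b$ freely.
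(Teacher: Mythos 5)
Your proposal is correct and follows the same skeleton as the paper's proof: pass to the saturation $J=(I:K^\infty)$, show that $\HF_{S/J}$ equals $r$ in all sufficiently large bidegrees, show that $I$ and $J$ agree in all sufficiently large bidegrees, and then pick $(d,e)\neq(1,1)$ with $(d,e)\geq(1,1)$ in the common stable range. The difference is in how the two halves are justified. For the stabilization of $\HF_{S/J}$ the paper simply cites the multigraded regularity results of Maclagan and Smith, whereas you give a self-contained interpolation argument: identify $J_{(d,e)}$ with the kernel of the evaluation map $S_{(d,e)}\to\C^r$ (using multiplicity one, so that the saturation is the vanishing ideal) and prove surjectivity by constructing separating polynomials $g_i$ and lifting them to a common bidegree by multiplying with monomials nonvanishing at $\z_i$. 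This is more elementary and buys independence from the regularity literature. Conversely, for the agreement $I_{(d,e)}=J_{(d,e)}$ in large bidegrees you only assert that it holds ``by the definition of saturation''; this is the one thin spot, since the definition alone does not give it in the multigraded setting. The paper supplies the missing two lines: choose $k$ with $K^kJ\subset I$, note $K^k=\ideal{S_{(k,k)}}$, take a finite homogeneous generating set $g_1,\dots,g_m$ of $J$, and observe that for $(d,e)\geq(k,k)+\deg(g_i)$ for all $i$ every element of $J_{(d,e)}$ lies in $S_{(k,k)}\cdot S\cdot\{g_1,\ldots,g_m\}\subset I$, because $S_{(c,f)}\cdot S_{(c',f')}=S_{(c+c',f+f')}$. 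With that step filled in, your argument is complete.
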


\begin{assumption} \label{assum:2}
Henceforth, we assume the following:
\begin{enumerate}
\item Let $I = \ideal{\ker \A_{(1)}} = \ideal{f_1,\ldots,f_s}$ be as in \cref{subsec:polfromflat}, where $f_1, \ldots, f_s \in S_{(1,1)}$ form a basis for $\ker \A_{(1)}$. We assume that $\A$ satisfies \cref{assum:vanishingideal}, so that $V_X(I)$ consists of the $r$ points $\z_1, \ldots, \z_r$ with multiplicity one by \cref{lem:zerodim}.
\item The tuple $(d,e) \in \N^2$ is such that $(d,e) \geq (1,1)$ and $\HF_{S/I}(d,e) = r$. Note that this is satisfied for $(d,e) = (1,1)$ by construction of $I$.\footnote{Often we will require $(d,e) \neq (1,1)$ and $(d-1,e-1)\geq (0,0)$. This makes sense by \cref{lem:hf}. The problem of how to find such a tuple $(d,e)$ will be the topic of \cref{sec:regorder3}.}
\item We write $(d',e') = (d - 1, e-1)$, and $h_0 \in S_{(d',e')}$ is such that $h_0(\z_i) \neq 0, i = 1, \ldots, r$. This makes sense by \cref{lem:h0}.
\end{enumerate}
\end{assumption}

\begin{lemma} \label{lem:towardsmhnf}
Let $I, (d,e)$ and $h_0$ be as in \cref{assum:2}. If a $\C$-linear map $N: S_{(d,e)} \rightarrow \C^r$ satisfies 
\begin{equation} \label{eq:MHNFcondition}
 \rank(N) = r \quad\text{and}\quad \ker N = I_{(d,e)}, 
 \end{equation}
then the induced linear map $N_{h_0} : S_{(1,1)} \rightarrow \C^r, f \mapsto N(h_0 f)$ has rank $r$. 
\end{lemma}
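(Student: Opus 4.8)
The plan is to show that $N_{h_0}$ is surjective onto $\C^r$, which for a map into $\C^r$ is equivalent to having rank $r$. First I would set up the natural "evaluation" picture. Since $V_X(I) = \{\z_1,\ldots,\z_r\}$ with multiplicity one (Assumption 1.2 via Lemma 1.1), for any bidegree $(a,b)\ge(1,1)$ with $\HF_{S/I}(a,b)=r$ the evaluation map $\mathrm{ev}_{(a,b)}\colon S_{(a,b)}\to\C^r$, $g\mapsto (g(\z_1),\ldots,g(\z_r))$ (using a fixed choice of homogeneous coordinates for each $\z_i$), has kernel exactly $I_{(a,b)}$ — indeed $\ker\mathrm{ev}_{(a,b)}$ is the degree-$(a,b)$ piece of the (multi)saturated/vanishing ideal of the points, which contains $I_{(a,b)}$, and equality of dimensions forces equality. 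In particular for $(a,b)=(1,1)$ this gives $\ker\mathrm{ev}_{(1,1)}=I_{(1,1)}$ and $\mathrm{rank}\,\mathrm{ev}_{(1,1)}=r$ (this is essentially the content of Assumption 1.2, item 1, and Lemma 1.2's setup); and for $(a,b)=(d,e)$ it gives $\ker\mathrm{ev}_{(d,e)}=I_{(d,e)}$ with $\mathrm{rank}=r$. The hypothesis (1.10) says $N$ has rank $r$ with $\ker N=I_{(d,e)}=\ker\mathrm{ev}_{(d,e)}$, so $N$ and $\mathrm{ev}_{(d,e)}$ differ by an invertible linear map $A\in\mathrm{GL}_r(\C)$: $N = A\circ\mathrm{ev}_{(d,e)}$.

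Next I would compute $N_{h_0}$ through this identification. For $f\in S_{(1,1)}$ we have, since multiplication is pointwise on values and $h_0,f$ are homogeneous so their evaluation at a fixed representative is well defined,
\[
N_{h_0}(f) = N(h_0 f) = A\big((h_0 f)(\z_1),\ldots,(h_0 f)(\z_r)\big) = A\,\mathrm{diag}\!\big(h_0(\z_1),\ldots,h_0(\z_r)\big)\,\mathrm{ev}_{(1,1)}(f).
\]
By Assumption 1.2, item 3, $h_0(\z_i)\ne 0$ for all $i$, so $D:=\mathrm{diag}(h_0(\z_1),\ldots,h_0(\z_r))$ is invertible. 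Hence $N_{h_0} = (A D)\circ\mathrm{ev}_{(1,1)}$ with $AD\in\mathrm{GL}_r(\C)$, and since $\mathrm{ev}_{(1,1)}$ is surjective (rank $r$), so is $N_{h_0}$; thus $\mathrm{rank}(N_{h_0})=r$.

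The one genuine point requiring care — the main obstacle — is the claim $\ker\mathrm{ev}_{(d,e)}=I_{(d,e)}$ (and likewise at $(1,1)$), i.e.\ that $I$ "has enough equations in bidegree $(d,e)$" to cut out the points scheme-theoretically in that graded piece. The containment $I_{(d,e)}\subseteq\ker\mathrm{ev}_{(d,e)}$ is immediate; the reverse uses that both are subspaces of $S_{(d,e)}$, that $\dim\ker\mathrm{ev}_{(d,e)} = \HF_S(d,e)-\mathrm{rank}\,\mathrm{ev}_{(d,e)}$, and that $\mathrm{rank}\,\mathrm{ev}_{(d,e)}=r$ because the $r$ points with multiplicity one impose independent conditions in every bidegree $\ge(1,1)$ that already has $\HF_{S/I}=r$ — this is exactly where multiplicity one and the hypothesis $\HF_{S/I}(d,e)=r$ enter, and it is the fact underpinning the appeal to \cite[Section 5.5]{telen2020thesis} and Corollary \ref{cor:satisvan}. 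Combined with $\HF_{S/I}(d,e)=\HF_S(d,e)-\HF_I(d,e)=r$, we get $\dim\ker\mathrm{ev}_{(d,e)}=\HF_S(d,e)-r=\HF_I(d,e)=\dim I_{(d,e)}$, forcing equality. Once this is in hand, the rest is the short linear-algebra computation above.
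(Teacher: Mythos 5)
Your argument is correct, but it is a genuinely more self-contained route than the paper's, whose entire proof consists of two citations to the thesis \cite{telen2020thesis}: Proposition 5.5.7 there converts $\HF_{S/I}(d,e)=r$ into the statement that $(1,1)$ and $(d',e')$ form a regularity pair, and Lemma 5.5.3 then delivers surjectivity of $N_{h_0}$. What you have done is unpack the content of those black boxes: any rank-$r$ map with kernel $I_{(d,e)}$ is an isomorphism composed with evaluation at the $\z_i$, multiplication by $h_0$ becomes the invertible matrix $\diag(h_0(\z_1),\ldots,h_0(\z_r))$, and surjectivity of $\mathrm{ev}_{(1,1)}$ finishes the job; this buys a proof that does not lean on the external reference. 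The one spot where your write-up is thinner than it should be is the claim $\rank \mathrm{ev}_{(d,e)}=r$, i.e.\ that the functionals $g\mapsto g(\z_i)$ stay independent on $S_{(d,e)}$: ``multiplicity one'' does not by itself give this, and your phrasing borders on asserting what is to be shown. The clean justification is in two steps. First, independence holds on $S_{(1,1)}$ because the second factor in \cref{eqn_flattening} has rank $r$ (this is exactly what \cref{assum:vanishingideal} and \cref{lem:zerodim} provide). Second, it propagates to every $(d,e)\geq(1,1)$ by the same $h_0$-trick you use later: if $\sum_i c_i\, g(\z_i)=0$ for all $g\in S_{(d,e)}$, then testing on $g=h_0f$ gives $\sum_i c_i h_0(\z_i) f(\z_i)=0$ for all $f\in S_{(1,1)}$, whence every $c_i h_0(\z_i)=0$ and so every $c_i=0$. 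With that inserted, your dimension count $\dim\ker\mathrm{ev}_{(d,e)}=\HF_S(d,e)-r=\dim I_{(d,e)}$, together with the containment $I_{(d,e)}\subseteq\ker\mathrm{ev}_{(d,e)}$, closes the argument.
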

\begin{proof}
Note that $\HF_{S/I}(1,1) = r$ by construction. By \cite[Proposition 5.5.7]{telen2020thesis}, the fact that $\HF_{S/I}(d,e) = r$ implies that $(1,1)$ and $(d',e')$ form a so-called \emph{regularity pair}. Surjectivity of the map $N_{h_0}$ then follows directly from \cite[Lemma 5.5.3]{telen2020thesis}.
\end{proof}

Linear maps $N: S_{(d,e)} \rightarrow \C^r$ satisfying \eqref{eq:MHNFcondition} will play an important role in translating our polynomial root finding problem into an eigenvalue problem. We will therefore give them a name. 
\begin{definition}[Pre-normal forms]
A $\C$-linear map $N: S_{(d,e)} \rightarrow \C^r$ satisfying \eqref{eq:MHNFcondition} is called a \emph{pre-normal form} on $S_{(d,e)}$.
\end{definition}
If $r = \ell+1$, \cref{assum:vanishingideal} implies that $\A_{(1)}: S_{(1,1)} \rightarrow \C^r$ is a pre-normal form on $S_{(1,1)}$. 

We use the terminology \emph{pre-}normal form because, following \cite[Definition 5.5.6]{telen2020thesis}, the term \emph{normal form} is reserved for linear maps satisfying an extra technical condition.
Normal forms are closely related to a classical result in computational algebraic geometry called the \emph{eigenvalue theorem} \cite[\S\,2.4]{cox2006using}, which is used for computing isolated solutions to polynomial systems. The pre-normal forms introduced above will be useful for formulating a multi-homogeneous version of this theorem, namely \cref{thm:MHNF} below. Before stating it, we need to fix some additional notation.

In \cref{lem:towardsmhnf}, $N_{h_0}$ is the composition of $N$ with the linear map that represents multiplication by $h_0 \in S_{(d',e')}$. More generally, for $g \in S_{(d',e')}$ we define $N_g : S_{(1,1)} \rightarrow S_{(d,e)}$ as $N_g(f) = N(gf)$. 
In \cref{thm:MHNF}, we will restrict the map $N_{h_0}$ to an $r$-dimensional subspace $B \subset S_{(1,1)}$ such that the resulting map is invertible. We denote this restriction by $(N_{h_0})_{|B}$. In practice, it suffices to select $r$ columns of the matrix of $N_{h_0}$ so that the resulting $r \times r$ submatrix $(N_{h_0})_{|B}$ is invertible. 

In what follows, we will write $w_i ^\top = (\beta_i \otimes \gamma_i)^\top_{|B} \in B^\vee$ for the linear functional representing evaluation at $\z_i = (\beta_i, \gamma_i)$. This is the restriction of the functional $(\beta_i \otimes \gamma_i)^\top \in S_{(1,1)}^\vee$ to the vector space $B$. Concretely, we set 
\begin{equation} \label{eq:evdef}
w_i^\top(b) = (\beta_i \otimes \gamma_i)^\top_{|B} (b) = b(\beta_i,\gamma_i), \quad \text{with} \quad b(x,y) \in B \subset S_{(1,1)}.
\end{equation}
After fixing a basis for $B^\vee$, $w_i^\top$ can be represented as a row vector.

If $g, h \in S_{(d',e')} \setminus \{0\}$ are two homogeneous polynomials of the same degree, then the fraction $g/h$ is a well-defined function on $X \setminus V_X(h)$. Indeed, the evaluation of this function does not depend on the choice of homogeneous coordinates. Therefore, we may write $(g/h)(\z)$ for the evaluation of this rational function at $\z \in X \setminus V_X(h)$.

\begin{theorem}[Eigenvalue theorem] \label{thm:MHNF}
Let $I$, $(d,e)$ and $h_0$ be as in \cref{assum:2} and let $N$ be a pre-normal form. Let $B \subset S_{(1,1)}$ be any $r$-dimensional subspace such that the restriction $(N_{h_0})_{|B} : B \rightarrow \C^r$ is invertible. For any $g \in S_{(d',e')}$ we have
\begin{equation} \label{eq:EVrelation}
w_i^\top \circ M_{g/h_0} = \frac{g}{h_0}(\z_i) \cdot w_i^\top, \quad i = 1, \ldots, r, 
\end{equation}
where $M_{g/h_0}: B \rightarrow B$ is the composition $(N_{h_0})_{|B}^{-1} \circ (N_g)_{|B}$.
\end{theorem}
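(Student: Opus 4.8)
The key idea is that the pre-normal form $N$ behaves, on the relevant graded pieces, like the evaluation map at the points $\z_1,\ldots,\z_r$. Concretely, I would first argue that since $\ker N = I_{(d,e)}$ and $\rank N = r$, the map $N$ factors through an isomorphism $\bar N : S_{(d,e)}/I_{(d,e)} \xrightarrow{\sim} \C^r$, and $\HF_{S/I}(d,e) = r$ means $V_X(I) = \{\z_1,\ldots,\z_r\}$ has multiplicity one, so the evaluation map $\mathrm{ev}_{(d,e)} : S_{(d,e)} \to \C^r,\ f \mapsto (f(\z_1),\ldots,f(\z_r))$ (for a fixed choice of homogeneous coordinates) also has kernel exactly $I_{(d,e)}$ and rank $r$. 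Hence there is an invertible diagonal-free change of basis $T \in \mathrm{GL}_r(\C)$ with $\mathrm{ev}_{(d,e)} = T \circ N$; moreover, because $h_0(\z_i)\neq 0$, the same holds on $S_{(1,1)}$ with the \emph{same} $T$ after rescaling: $\mathrm{ev}_{(1,1)}(f) = D^{-1} T\, N_{h_0}(f)$ where $D = \diag(h_0(\z_1),\ldots,h_0(\z_r))$. The cleanest route, though, is to avoid introducing $T$ and instead track everything through the functionals $w_i^\top$ directly, using that $N_{h_0}$ is surjective by \cref{lem:towardsmhnf}, hence $(N_{h_0})_{|B}$ is a genuine isomorphism onto $\C^r$.

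Next I would establish the multiplicativity relation for the evaluation functionals. For $b \in B \subset S_{(1,1)}$ and $g \in S_{(d',e')}$, the product $gb$ lies in $S_{(d,e)}$, and by definition $N_g(b) = N(gb)$. The crucial pointwise identity is that for each $i$, the composite functional "evaluate $N_g(b)$ against the $i$-th coordinate" equals $g(\z_i)\, b(\z_i)$; this is exactly the statement that $N$ agrees with $\mathrm{ev}_{(d,e)}$ up to the fixed invertible $T$, combined with $(gb)(\z_i) = g(\z_i) b(\z_i)$, which holds because evaluation is a ring homomorphism on $S$ localized away from the coordinate hyperplanes through $\z_i$. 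I would phrase this as: let $u_i^\top$ denote the $i$-th row of $(N_{h_0})_{|B}^{-1}$-adjusted coordinate functional — more precisely, since $(N_{h_0})_{|B}$ is invertible, there are unique functionals $u_i^\top \in B^\vee$ with $u_i^\top \circ (N_{h_0})_{|B}^{-1} = $ ($i$-th coordinate projection on $\C^r$), and the surjectivity plus the multiplicity-one structure of $V_X(I)$ forces $u_i^\top$ to be a nonzero scalar multiple of $w_i^\top = (\beta_i\otimes\gamma_i)^\top_{|B}$. Actually, the scalar is pinned down: applying the coordinate projection to $N_{h_0}(b)$ gives, on one hand, $u_i^\top(b)$, and on the other hand (by the $\mathrm{ev}$-comparison) $c_i\, h_0(\z_i)\, b(\z_i)$ for some fixed $c_i\neq 0$; choosing the normalization of the coordinates consistently makes $u_i^\top = w_i^\top$, or at worst $w_i^\top$ up to a global nonzero scalar that cancels in \eqref{eq:EVrelation}.

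Then the theorem follows by a direct computation. For $b \in B$, write $M_{g/h_0}(b) = (N_{h_0})_{|B}^{-1}(N_g(b)) \in B$. Apply $w_i^\top$: by the characterization of the rows of $(N_{h_0})_{|B}^{-1}$ above, $w_i^\top\big((N_{h_0})_{|B}^{-1}(v)\big)$ equals the $i$-th coordinate of $v \in \C^r$ (up to the global scalar), so $w_i^\top(M_{g/h_0}(b))$ equals the $i$-th coordinate of $N_g(b) = N(gb)$, which by the $\mathrm{ev}$-comparison equals $\tfrac{1}{h_0(\z_i)}\cdot h_0(\z_i)^{-1}\cdots$ — let me state it cleanly: the $i$-th coordinate of $N(gb)$ is a fixed nonzero multiple $\lambda_i$ of $(gb)(\z_i) = g(\z_i) b(\z_i)$, while the $i$-th coordinate of $N(h_0 b) = N_{h_0}(b)$ is $\lambda_i\, h_0(\z_i)\, b(\z_i) = \lambda_i\, h_0(\z_i)\, w_i^\top(b)$; since $\lambda_i\, h_0(\z_i) \neq 0$ and $w_i^\top(b)$ is the $i$-th coordinate functional reconstructed from $(N_{h_0})_{|B}^{-1}$, we get $w_i^\top(M_{g/h_0}(b)) = \frac{g(\z_i) b(\z_i)}{h_0(\z_i)} = \frac{g}{h_0}(\z_i)\, w_i^\top(b)$. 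Since $b \in B$ was arbitrary this is exactly \eqref{eq:EVrelation}. Note the $r$ functionals $w_1^\top,\ldots,w_r^\top$ are linearly independent because $(N_{h_0})_{|B}$ is invertible and they form (up to scalars) the dual basis to its inverse's columns; this also confirms $M_{g/h_0}$ is well defined as a map $B \to B$.

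\textbf{Main obstacle.} The delicate point is bookkeeping the scalars: evaluation "at $\z_i$" is only defined up to the choice of homogeneous coordinates for $\z_i$, and the pre-normal form $N$ is only pinned down by its kernel and rank, not canonically identified with $\mathrm{ev}$. The real content to nail down is that, whatever representative one picks, the composition $(N_{h_0})_{|B}^{-1}\circ (N_g)_{|B}$ is \emph{intrinsic} (independent of the coordinate choices and of the comparison isomorphism $T$), because in the final eigenvalue relation the offending scalars appear on both sides and cancel — the eigenvalue $\frac{g}{h_0}(\z_i)$ is a genuine rational function value, hence coordinate-independent, and the eigenvector $w_i^\top$ is determined only up to scalar anyway. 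Making this cancellation rigorous, rather than hand-wavy, is where I would spend the care; the underlying facts (that $\ker N = I_{(d,e)}$ forces $N$ to agree with evaluation up to $\mathrm{GL}_r$, and that $h_0(\z_i)\neq 0$ makes $N_{h_0}$ "the same" comparison on $S_{(1,1)}$) are immediate from \cref{lem:towardsmhnf} and the multiplicity-one hypothesis in \cref{assum:2}.
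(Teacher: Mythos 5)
Your overall strategy is sound and, unlike the paper's proof (which is a bare citation to \cite[Theorem 5.5.3, Propositions 5.5.4 and 5.5.5]{telen2020thesis}), it is self-contained: you compare the pre-normal form $N$ with the evaluation map $\eval_{(d,e)}: f \mapsto (f(\z_1),\ldots,f(\z_r))$, observe that both are surjective with kernel $I_{(d,e)}$, hence $\eval_{(d,e)} = T \circ N$ for some $T \in \mathrm{GL}_r(\C)$, and then exploit multiplicativity of evaluation. Two points deserve care. First, the identity $\ker \eval_{(d,e)} = I_{(d,e)}$ is not purely a consequence of "multiplicity one": you need $\eval_{(d,e)}$ to be surjective, since a priori you only have $I_{(d,e)} \subseteq \ker\eval_{(d,e)}$ together with $\dim I_{(d,e)} = \HF_S(d,e) - r$. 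Surjectivity does hold — the $w_i$ are linearly independent in $S_{(1,1)}^\vee$ by \cref{assum:2}, so one can pick $f_i \in S_{(1,1)}$ with $f_i(\z_j) = \delta_{ij}$ and multiply by an $h \in S_{(d',e')}$ nonvanishing at all $\z_j$ (\cref{lem:h0}) to separate the points in degree $(d,e)$ — but this step should be said, because without it $T$ need not exist.

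Second, and more seriously, your intermediate claim that $u_i^\top = \pi_i \circ (N_{h_0})_{|B}$ is a scalar multiple of $w_i^\top$ (equivalently, that "the $i$-th coordinate of $N(gb)$ is a fixed nonzero multiple $\lambda_i$ of $(gb)(\z_i)$") is false for a general pre-normal form: a pre-normal form is only determined up to left multiplication by $\mathrm{GL}_r(\C)$, so its individual coordinate functionals are generic linear combinations $\sum_j (T^{-1})_{ij}\,\eval_{\z_j}$, not single evaluations; your claim amounts to assuming $T$ is diagonal. The written computation leans on this false claim, so as stated the proof has a gap. The repair is short and uses exactly the cancellation you gesture at in your "obstacle" paragraph: since the \emph{same} $T$ intertwines $N_g$ and $N_{h_0}$ with the corresponding evaluation maps, one has $\pi_i \circ T \circ (N_{h_0})_{|B} = h_0(\z_i)\, w_i^\top$ as functionals on $B$, hence $w_i^\top \circ (N_{h_0})_{|B}^{-1} = h_0(\z_i)^{-1}\, \pi_i \circ T$, and therefore
\begin{equation*}
w_i^\top \circ M_{g/h_0} \;=\; \tfrac{1}{h_0(\z_i)}\,\pi_i \circ T \circ (N_g)_{|B} \;=\; \tfrac{1}{h_0(\z_i)}\, g(\z_i)\, w_i^\top \;=\; \tfrac{g}{h_0}(\z_i)\, w_i^\top ,
\end{equation*}
with no assumption on the shape of $T$. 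With these two fixes the argument is complete, and the coordinate-rescaling issues you flag are indeed harmless for the reason you give: $g/h_0$ is a genuine rational function on $X$ and $w_i^\top$ is an eigenvector, determined only up to scale.
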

\begin{proof}
This follows from \cite[Theorem 5.5.3, Propositions 5.5.4 and 5.5.5]{telen2020thesis} and the fact that $(1,1)$ and $(d',e')$ form a regularity pair (see the proof of \cref{lem:towardsmhnf}).
\end{proof}

After fixing a basis for $B$ and representing $w_i$ in the dual basis for $B^\vee$, \eqref{eq:EVrelation} is a standard matrix eigenproblem: $w_i^\top M_{g/h_0} = \lambda_i w_i^\top$. That is, $(\lambda_i,w_i)$ is a left eigenpair of the $r \times r$ matrix $M_{g/h_0}$. Note that \cref{thm:MHNF} implies that all maps of the form $M_{g/h_0}$ share a set of eigenvectors. 

We now sketch one way of using \cref{thm:MHNF} to retrieve the coordinates of $\z_i = (\beta_i, \gamma_i)$ from eigenvalues, assuming a pre-normal form $N: S_{(d,e)} \rightarrow \C^r$ is given. The problem of computing a pre-normal form is addressed in the next subsection. We assume $d \geq 2$.\footnote{If $d = 1$ and $e \geq 2$, the roles of $d$ and $e$ can be swapped so that the approach still works.}  Let $h \in S_{(d'-1,e')}$ and $h_0 \in S_{(d',e')}$ be homogeneous polynomials that do not vanish at any of the points $\z_i$. These can be chosen generically. Set $g_j = x_j h \in S_{(d',e')}, j = 0, \ldots, m$. Choose $B \subset S_{(1,1)}$ of dimension $r$ such that $(N_{h_0})_{|B}$ is invertible and compute the matrices $M_j = M_{g_j/h_0} =  (N_{h_0})_{|B}^{-1} \circ (N_{g_j})_{|B}$. By \cref{thm:MHNF}, the eigenvalues of $M_j$ are given by $\lambda_{ji} = (g_j/h_0)(\z_i)$. Writing $\beta_{ij}$ for the $j$-th coordinate of $\beta_i$, we have 
\[ 
(\lambda_{0i} : \cdots : \lambda_{mi}) = \left ( \frac{\beta_{i0} h(\z_i)}{h_0(\z_i)} : \cdots : \frac{\beta_{im} h(\z_i)}{h_0(\z_i)} \right ) = ( \beta_{i0} : \cdots : \beta_{im}) = \beta_i.
\]
Note that if $(d,e) = (2,1)$, we can take $h = 1$.

Subsequently, we compute $\gamma_i$ by solving the linear system of equations 
\begin{equation} \label{eq:linsysgamma}
f_1(\beta_i,y) = \cdots = f_s(\beta_i,y) = 0.
\end{equation}

The foregoing approach requires that $(d,e) \neq (1,1)$. Otherwise $S_{(d',e')}=S_{(0,0)} = \C$, and we can only evaluate constant functions using \cref{thm:MHNF}.

\subsection{Computing pre-normal forms} \label{sec_sub_prenormalform}
As illustrated in the previous subsection, once we have computed a pre-normal form, the points $\z_i$ can be recovered using basic linear algebra computations.
A natural next issue to address is \emph{how to compute a pre-normal form}. 

Our starting point is a basis $f_1, \ldots, f_s \in S_{(1,1)}$ of $\ker \A_{(1)}$, generating our ideal $I = \ideal{\ker \A_{(1)}} = \ideal{f_1,\ldots,f_s}$. For any tuple $(d,e) \in \N^2$ such that $(d,e) \geq (1,1)$, the degree-$(d,e)$ part $I_{(d,e)}$ of $I$ is the $\C$-vector space spanned by 
\begin{equation} \label{eq:alphabasis}
\{ x^{a'} y^{b'} f_i ~|~ (|a'|, |b'|) = (d',e'),~ i = 1, \ldots, s \} ~ \subset ~ S_{(d,e)},
\end{equation}
where $(d',e') = (d-1,e-1)$. If $d = 0$ or $e = 0$, we have $I_{(d,e)} = \{0\}$.
In analogy with \eqref{eq:exmacmtx}, we construct a matrix whose rows are indexed by the monomial basis elements of $S_{(d,e)}$ (i.e., the monomials $\{ x^ay^b ~|~ |a| = d, |b|=e \}$), and whose columns are the polynomials \eqref{eq:alphabasis} expanded in this basis. We denote this matrix by 
\begin{equation} \label{eq:resmap}
R_I(d,e) \in \C^{\HF_S(d,e)~\times~ s\, \HF_S(d',e')}.
\end{equation}
Such matrices represent \emph{graded resultant maps} in the terminology of \cite[Section 5.5.4]{telen2020thesis}. They are multihomogeneous versions of the classical \emph{Macaulay matrices} \cite{macaulay1916algebraic}. We present an explicit example below in \cref{ex:running3}.

Observe that the Hilbert function $\HF_I(d,e)$ is given by the rank of $R_I(d,e)$ and $\HF_{S/I}(d,e)$ by its corank. This follows immediately from the observation that the columns of $R_I(d,e)$ span $I_{(d,e)}$. A \emph{left nullspace matrix} $N$ of $R_I(d,e)$ represents a map $S_{(d,e)} \longrightarrow S_{(d,e)}/I_{(d,e)} \simeq \C^{\HF_{S/I}(d,e)}$. This has the following consequence. 
\begin{proposition} \label{prop:checkhf}
If $(d,e) \in \N^2$ is such that $\HF_{S/I}(d,e) = r$, then any left nullspace matrix $N$ of $R_I(d,e)$ represents a pre-normal form. 
\end{proposition}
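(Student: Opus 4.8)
The plan is to verify the two defining conditions of a pre-normal form from \eqref{eq:MHNFcondition}: namely that $\rank(N) = r$ and $\ker N = I_{(d,e)}$. The key observation, already noted just before the statement, is that the columns of $R_I(d,e)$ span $I_{(d,e)}$ inside $S_{(d,e)}$, since those columns are exactly the monomial-multiples $x^{a'}y^{b'}f_i$ of the generators $f_1,\dots,f_s$, which span $I_{(d,e)}$ by \eqref{eq:alphabasis}. Hence the column space of $R_I(d,e)$, viewed as a subspace of $S_{(d,e)} \cong \C^{\HF_S(d,e)}$ via the monomial basis, is precisely $I_{(d,e)}$.

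First I would recall what ``left nullspace matrix'' means: $N$ is a matrix whose rows form a basis of the left nullspace of $R_I(d,e)$, i.e.\ $N \cdot R_I(d,e) = 0$ and the rows of $N$ span $\{v^\top : v^\top R_I(d,e) = 0\}$. Such an $N$ has exactly $\HF_S(d,e) - \rank R_I(d,e)$ rows, which equals the corank of $R_I(d,e)$, i.e.\ $\HF_{S/I}(d,e)$. Under the hypothesis $\HF_{S/I}(d,e) = r$, this gives $N \in \C^{r \times \HF_S(d,e)}$, and since its rows are linearly independent, $\rank(N) = r$. That settles the first condition.

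For the second condition, I would argue that $\ker N$, as a subspace of $S_{(d,e)}$, equals the column space of $R_I(d,e)$. This is the standard linear-algebra fact that for any matrix $A$, the kernel of a (full-rank) left-nullspace matrix of $A$ is exactly the column space of $A$: indeed $\ker N \supseteq \operatorname{col}(A)$ because $NA = 0$, and both spaces have the same dimension $\rank(A)$ (the column space has dimension $\rank(A)$, and $\ker N$ has dimension $\HF_S(d,e) - \rank(N) = \HF_S(d,e) - \operatorname{corank}(A) = \rank(A)$). Combined with the identification $\operatorname{col}(R_I(d,e)) = I_{(d,e)}$ from the paragraph above, this yields $\ker N = I_{(d,e)}$. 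Therefore $N$ represents a map $S_{(d,e)} \to \C^r$ satisfying both conditions in \eqref{eq:MHNFcondition}, so it is a pre-normal form.

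The argument is essentially a dimension-count plus the bookkeeping that the monomial-multiples of the $f_i$ span $I_{(d,e)}$; there is no genuine obstacle. The only point requiring a little care is to be explicit that the monomial basis of $S_{(d,e)}$ furnishes the coordinate identification $S_{(d,e)} \cong \C^{\HF_S(d,e)}$ under which ``columns span $I_{(d,e)}$'' and ``kernel of $N$ equals $I_{(d,e)}$'' are literally the same statement, so that the linear-algebra identity transfers to the claimed ring-theoretic one. One should also note that the hypothesis $\HF_{S/I}(d,e)=r$ is what forces the codomain to be $\C^r$ rather than $\C^k$ for some other $k$; without it $N$ would still be a ``nullspace matrix'' but not a pre-normal form in the sense of the definition.
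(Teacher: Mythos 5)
Your proof is correct and follows essentially the same route as the paper, which treats the proposition as an immediate consequence of the observation that the columns of $R_I(d,e)$ span $I_{(d,e)}$, so that a left nullspace matrix represents the quotient map $S_{(d,e)} \to S_{(d,e)}/I_{(d,e)} \simeq \C^r$. Your explicit verification of the two conditions in \eqref{eq:MHNFcondition} via the rank--corank dimension count is exactly the intended argument.
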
 

We conclude that a pre-normal form $N$ can be computed, for instance, from a full singular value decomposition (SVD) of $R_I(d,e)$, where $\HF_{S/I}(d,e) = r$.
This solves the problem of computing a pre-normal form, assuming that we know a degree $(d,e) \in \N^2$ for which $\HF_{S/I}(d,e) = r$. The problem of finding such degrees is addressed in \cref{sec:regorder3}.

\subsection{Relation to pencil-based algorithms} \label{sec_sub_pba}
To obtain the homogeneous coordinates for $\z_1, \ldots, \z_r$ as eigenvalues of the matrices $M_{g/h_0}$, we usually have to work with pre-normal forms on $S_{(d,e)}$, where $(d,e) \neq (1,1)$ and $(d',e') \geq (0,0)$. An exception is the case where $r \leq m+1 \leq \ell + 1$. For these tensors of very low rank, a pre-normal form $N: S_{(1,1)} \rightarrow \C^r$ will suffice under the mild condition that 
\begin{equation} \label{eq:linindepgamma}
[ \beta_1 ~ \cdots ~ \beta_r] ~ \in \C^{(m+1) \times r} \quad \text{has rank $r$.}
\end{equation}
The underlying reason is that vanishing at $\{\z_1,\ldots, \z_r\}$ gives $r$ linearly independent conditions on $S_{(1,0)}$. The proof of the following theorem is another consequence of the theory of homogeneous normal forms and is deferred to \cref{app:proofs}.

\begin{theorem}[Eigenvalue theorem for low ranks] \label{thm:MHNFlowrank}
Let $I = \ideal{\ker \A_{(1)}}$ where $\A$ has rank $r \leq m+1 \leq n+1$ and satisfies both \cref{assum:vanishingideal} and \eqref{eq:linindepgamma}. Let $h_0 \in S_{(0,1)}$ be such that $h_0(\z_i) \neq 0$ for $i = 1\ldots, r$ and let $N : S_{(1,1)} \rightarrow \C^r$ be a pre-normal form on $S_{(1,1)}$. For $g \in S_{(0,1)}$ we define 
$$ \widetilde{N_{g}} : S_{(1,0)} \rightarrow \C^r \quad \text{given by } \quad   \widetilde{N_{g}}(f) = N (gf).$$
We have that $\widetilde{N_{h_0}}$ has rank $r$. For any $r$-dimensional subspace $B \subset S_{(1,0)}$ such that the restriction $(\widetilde{N_{h_0}})_{|B} : B \rightarrow \C^r$ is invertible, the eigenvalues of $ M_{y_j/h_0} = (\widetilde{N_{h_0}})_{|B}^{-1} \circ (\widetilde{N_{y_j}})_{|B}$ are $\{\gamma_{ij}/h_0(\z_i)\}_{i = 1, \ldots, r}$.
\end{theorem}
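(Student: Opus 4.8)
The plan is to mimic the proof of \cref{thm:MHNF}: identify the pre-normal form $N$ with the evaluation map on $S_{(1,1)}$ up to an invertible change of coordinates on $\C^r$, and then read off the spectrum of $M_{y_j/h_0}$ by conjugating a diagonal matrix. First I would introduce the two evaluation maps $\epsilon_{(1,1)}\colon S_{(1,1)}\to\C^r$, $f\mapsto(f(\z_1),\dots,f(\z_r))$, and $\epsilon_{(1,0)}\colon S_{(1,0)}\to\C^r$, $f\mapsto(f(\beta_1),\dots,f(\beta_r))$. Since a form in $S_{(1,0)}$ is linear in the $x$-variables only, its value at $\z_i=(\beta_i,\gamma_i)$ depends only on $\beta_i$, so $\epsilon_{(1,0)}$ is well defined on homogeneous coordinates and one has the basic identity $\epsilon_{(1,1)}(gf)=D_g\,\epsilon_{(1,0)}(f)$ for $f\in S_{(1,0)}$, $g\in S_{(0,1)}$, where $D_g=\diag(g(\z_1),\dots,g(\z_r))$. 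In the monomial basis $x_0,\dots,x_m$ the matrix of $\epsilon_{(1,0)}$ is $[\beta_1\ \cdots\ \beta_r]^{\top}$, so hypothesis \eqref{eq:linindepgamma} says exactly that $\epsilon_{(1,0)}$ is surjective, and $D_{h_0}$ is invertible because $h_0(\z_i)\neq0$.

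The crux — and the one place where \eqref{eq:linindepgamma} really does the work — is showing $\widetilde{N_{h_0}}$ has full rank $r$. Taking $g=h_0$ in the identity above, the composite $\epsilon_{(1,1)}\circ(h_0\cdot)\colon S_{(1,0)}\to\C^r$ equals $D_{h_0}\,\epsilon_{(1,0)}$ and is therefore onto; hence $\epsilon_{(1,1)}$ is onto, so $\dim\ker\epsilon_{(1,1)}=\HF_S(1,1)-r=\HF_I(1,1)=\dim I_{(1,1)}$, using $\HF_{S/I}(1,1)=r$ from \cref{assum:2}. Since $I_{(1,1)}\subseteq\ker\epsilon_{(1,1)}$ (every element of $I$ vanishes on $V_X(I)=\{\z_1,\dots,\z_r\}$) and the two spaces have equal dimension, $\ker\epsilon_{(1,1)}=I_{(1,1)}=\ker N$; as $N$ and $\epsilon_{(1,1)}$ are both surjections onto $\C^r$ with this common kernel, $N=Q\circ\epsilon_{(1,1)}$ for some $Q\in\mathrm{GL}_r(\C)$. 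Substituting, $\widetilde{N_g}=N\circ(g\cdot)=Q\,D_g\,\epsilon_{(1,0)}$ as maps $S_{(1,0)}\to\C^r$, for every $g\in S_{(0,1)}$; in particular $\widetilde{N_{h_0}}=Q\,D_{h_0}\,\epsilon_{(1,0)}$ is the surjection $\epsilon_{(1,0)}$ followed by invertible maps, hence has rank $r$.

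It then remains to compute eigenvalues. Given any $r$-dimensional $B\subset S_{(1,0)}$ with $(\widetilde{N_{h_0}})_{|B}$ invertible, the identity $(\widetilde{N_{h_0}})_{|B}=Q\,D_{h_0}\,(\epsilon_{(1,0)})_{|B}$ forces $(\epsilon_{(1,0)})_{|B}\colon B\to\C^r$ to be invertible as well, and then
\[
 M_{y_j/h_0}=(\widetilde{N_{h_0}})_{|B}^{-1}\circ(\widetilde{N_{y_j}})_{|B}
 =(\epsilon_{(1,0)})_{|B}^{-1}\,D_{h_0}^{-1}D_{y_j}\,(\epsilon_{(1,0)})_{|B},
\]
which is similar to the diagonal matrix $D_{h_0}^{-1}D_{y_j}=\diag\big(y_j(\z_1)/h_0(\z_1),\dots,y_j(\z_r)/h_0(\z_r)\big)$. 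As $y_j(\z_i)=\gamma_{ij}$, the eigenvalues of $M_{y_j/h_0}$ are exactly $\{\gamma_{ij}/h_0(\z_i)\}_{i=1,\dots,r}$, the rows of $(\epsilon_{(1,0)})_{|B}$ — the functionals $b\mapsto b(\beta_i)$ restricted to $B$ — being the corresponding left eigenvectors, in keeping with \cref{thm:MHNF}. I expect the only subtle step to be verifying $\ker\epsilon_{(1,1)}=I_{(1,1)}$, i.e.\ that $\{\z_1,\dots,\z_r\}$ imposes independent conditions on $S_{(1,1)}$; as sketched above this drops out of \eqref{eq:linindepgamma} via the degree bump by $h_0$, but it could alternatively be extracted from the regularity-pair formalism of \cite[\S\,5.5]{telen2020thesis} exactly as in \cref{lem:towardsmhnf}.
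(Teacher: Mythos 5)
Your argument is correct, and it takes a genuinely more self-contained route than the paper. The paper's own proof is one line: it observes that $((0,1),(1,0))$ is a regularity pair for the saturation $J=(I:K^\infty)$ and then cites \cite[Theorem 5.5.3, Propositions 5.5.4 and 5.5.5]{telen2020thesis}. What you do explicitly---using \eqref{eq:linindepgamma} to get surjectivity of $\epsilon_{(1,0)}$, bumping the degree by $h_0$ to get surjectivity of $\epsilon_{(1,1)}$, concluding $\ker\epsilon_{(1,1)}=I_{(1,1)}=\ker N$ by a dimension count, and hence $N=Q\circ\epsilon_{(1,1)}$ with $Q$ invertible---is exactly the content of that regularity-pair condition, and your factorization $\widetilde{N_g}=Q\,D_g\,\epsilon_{(1,0)}$ then reduces $M_{y_j/h_0}$ to a conjugate of $D_{h_0}^{-1}D_{y_j}$ in the same way the paper's informal pencil-based computation in \cref{sec_sub_pba} does. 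The trade-off: the citation keeps the proof short and situates the result in the general homogeneous normal form framework, while your version is verifiable without the thesis and isolates precisely where \eqref{eq:linindepgamma} and $h_0(\z_i)\neq 0$ enter. Two points you should state rather than leave implicit if you write this up: the maps $\epsilon_{(1,0)}$ and $\epsilon_{(1,1)}$ and the matrices $D_g$ depend on a fixed choice of homogeneous coordinates for the $\z_i$ (harmless, since rescaling only conjugates everything by diagonal matrices and leaves the spectra claim in projective form), and the inclusion $I_{(1,1)}\subseteq\ker\epsilon_{(1,1)}$ needed for your dimension count rests on \cref{lem:zerodim}(iii) (equivalently, on the factorization \cref{eqn_flattening} together with the linear independence of the $\alpha_i$), which is part of the genericity in \cref{assum:vanishingideal}.
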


This theorem is exploited to compute $\z_i = (\beta_i, \gamma_i)$ efficiently as follows. 
If $\A$ satisfies \cref{assum:vanishingideal} and \eqref{eq:linindepgamma}, we take a basis of the $r$-dimensional row span of $\A_{(1)}$ in \cref{thm:MHNFlowrank} as our pre-normal form. This can be obtained from a compact SVD of $\A_{(1)}$. Once the $\gamma_i$ are computed from the eigenvalues of the $M_{y_j/h_0}$, the $\beta_i$ can be obtained as in \eqref{eq:linsysgamma}. Alternatively, one can use the eigenvectors of these commuting matrices $M_{y_j/h_0}$ for $j=1,\ldots,r$ \cite[Theorem 5.5.3]{telen2020thesis}.

\Cref{thm:MHNFlowrank} is intimately related to what \cite{BBV2019} called \textit{pencil-based algorithms} for solving \cref{eqn_cpd} when the rank satisfies $r \le m+1 \le \ell+1$, such as those by \cite{LRA1993,Lorber1985,SK1990}. 
Recall that pencil-based algorithms assume that $\A \in \C^{r \times r \times (n+1)}$ is a rank-$r$ tensor.\footnote{The decomposition problem for a rank-$r$ tensor in $\C^{(\ell+1)\times(m+1)\times(n+1)}$ with $r \le m+1 \le \ell+1$ can always be reduced to this so-called \emph{concise} case \cite{Landsberg2012,BCS1997} by computing an orthogonal Tucker decomposition \cite{Tucker1966} followed by a rank-$r$ decomposition of the core tensor.} 
In addition, they assume that the $\alpha_i$ form a linearly independent set, and likewise for the $\beta_i$'s. Then, we have that the \textit{tensor contraction} of $\A$ with $h_0^\top \in (\C^{n+1})^\vee$, i.e.,
\begin{align} \label{eqn_multimult}
 h_0^\top \cdot_3 \mathcal{A} = \sum_{i=1}^r (\alpha_i \otimes \beta_i) \cdot (h_0^\top \gamma_i) =  A D_{h_0} B^\top,
\end{align}
is an invertible $r \times r$ matrix insofar as $h_0^\top \gamma_i \ne 0$. Herein, $A \in \C^{r \times r}$ (respectively $B \in \C^{r \times r}$) has the $\alpha_i$'s (respectively $\beta_i$'s) as columns, and $D_{h_0} = \diag(h_0^\top \gamma_1, \ldots, h_0^\top \gamma_r)$. Let $\widetilde{N_{h_0}} = h_0^\top \cdot_3 \A$ and $\widetilde{N_{g}} = g^\top \cdot_3 \A$ for $h_0, g \in (\C^{n+1})^\vee$. Then, we have 
\[
 M_{g/h_0} = \widetilde{N_{h_0}}^{-1} \widetilde{N_{g}} = B^{-\top} D_{h_0}^{-1} D_{g} B^\top,
\]
so that the points $\beta_i$ can be recovered uniquely from the matrix of \emph{eigenvectors} $B^\top$, provided that $h_0^\top \gamma_i \ne 0$ for all $i=1,\ldots,r$. The $\alpha_i$'s and $\gamma_i$'s can then be recovered from the $2$-flattening; see \cite{LRA1993,Lorber1985,SK1990,BBV2019,Lathauwer2006} for more details.
With the foregoing suggestive notation, it is easy to see that the matrix of $\widetilde{N_{h_0}} : f \mapsto N(fh_0)$ with respect to the standard bases is precisely \cref{eqn_multimult}. Indeed, note that since
we can take $N = \A_{(1)}$, we have $f h_0 \simeq f \otimes h_0$ and so $N(f h_0) 
 = \A_{(1)}(f \otimes h_0)$.

Pencil-based algorithms may thus be interpreted as a special case of the proposed \algoname{} algorithm based on homogeneous normal forms when $r \le m+1 \le \ell+1$. Note that because of the numerical instabilities analyzed by \cite{BBV2019} caused by extracting $\beta_i$ from the \emph{eigenvectors}, we prefer to extract the $\z_i = (\beta_i,\gamma_i)$ in a different way. We compute $\beta_i$ from the \emph{eigenvalues} of $M_{x_i/h_0}$ and the corresponding $\gamma_i$ from the linear system \cref{eq:linsysgamma}.

\subsection{The algorithm} \label{sec_sub_hnfalgorithm}
The discussion so far is distilled into \cref{alg:mhnfalg}. This algorithm implements step \ref{line:solvepolsys} of \cref{alg:pseudoalg}. Note that we dropped the tilde on top of the $N_\star$'s in lines \ref{line_start_pba}--\ref{line_end_pba} to streamline the presentation.

\begin{algorithm}[h!]
\small
\caption{Compute $V_X(I)$ for $I = \ideal{\ker \A_{(1)}}= \ideal{f_1,\ldots,f_s}$}
 \label{alg:mhnfalg}
\begin{algorithmic}[1]
\REQUIRE $r$ is the rank of $\A \in \C^{(\ell+1) \times (m+1) \times(n+1)}$ with $\ell \ge m \ge n$.
\IF {$r \leq m+1$ and \eqref{eq:linindepgamma}}
\STATE $N \gets $ an $r \times (m+1)(n+1)$ matrix representing the row space of $\A_{(1)}$, whose columns are indexed by the symbols $\d_{k,l}$ \label{line_start_pba}
\FOR{$k = 0, \ldots, m$}
\STATE $N_k \gets$ the submatrix of $N$ with columns indexed by $\{ \d_{k,j} \mid j = 0, \ldots, n \}$
\ENDFOR
\STATE ${N_{h_0}} \gets$ $c_0 N_0 + \cdots + c_m N_m$, a random $\C$-linear combination of the $N_k$
\STATE $h \gets 1$ \label{line_end_pba}
\ELSE
\STATE $(d,e) \gets$ a tuple in $\N^2$ such that $(d,e) \neq (1,1)$, $(d',e') \geq (0,0)$ and $\HF_{S/I}(d,e) = r$ \label{line_alg_degree}
\STATE Construct the resultant matrix $R_I(d,e)$
\STATE $N \gets $ an $r \times \HF_S(d,e)$ matrix representing the left nullspace of $R_I(d,e)$, whose columns are indexed by the symbols $\d_{a,b}$ with $|a| = d$, $|b| =e$. \label{line_null_space}
\FOR{$(a',b')$ such that $|a'| = d'$, $|b'| = e'$}
\STATE $N_{a',b'} \gets$ submatrix of $N$ with columns indexed by $\{ \d_{a,b} \mid a-a' \geq 0,\, b-b' \geq 0\}$
\ENDFOR
\STATE $N_{h_0} \gets \sum_{|a'|=d',|b'|=e'} c_{a',b'} N_{a',b'}$, a random $\C$-linear combination of the $N_{a',b'}$
\STATE $h \gets \sum_{|a''|=d'-1,|b'|=e'} \hat{c}_{a'',b'} x^{a''} y^{b'}$, a random element of $S_{(d'-1,e')}$ 
\FOR{$k = 0, \ldots, m$}
\STATE $N_k \gets \sum_{|a''|=d'-1, |b'|=e'} \hat{c}_{a'',b'} N_{a''+e_k,b'}$ 
\ENDFOR 
\ENDIF
\STATE $Q,R,p \gets$ QR factorization of $N_{h_0}$ with optimal column pivoting \label{line_qr}
\STATE $(N_{h_0})_{|B} \gets R[:,1,\ldots,r]$ \label{line_qr2}
\FOR{$k = 0, \ldots, m$} \label{line_mm_start}
\STATE $M_{(hx_k)/h_0} \gets (N_{h_0})_{|B}^{-1}  Q^H N_k[:,p(1,\ldots,r)]$
\ENDFOR \label{line_mm_end}
\STATE $(\beta_1, \ldots, \beta_r) \gets $ simultaneous diagonalization of  $M_{(h x_0)/h_0}, \ldots, M_{(h x_m)/h_0}$
\label{step:betasfound}
\FOR{$i = 1, \ldots, r$} 
\STATE $\gamma_i \gets$ solve $f_1(\beta_i,y) = \cdots = f_s(\beta_i,y) = 0$ for $y$ \label{line_systemker}
\STATE $(\beta_i, \gamma_i) \gets $ refine $(\beta_i, \gamma_i)$ using Newton iteration \label{line_newton}
\ENDFOR
\RETURN $(\beta_1,\gamma_1), \ldots, (\beta_r,\gamma_r)$
\end{algorithmic}
\end{algorithm}
 
The first phase of the algorithm, up to line \ref{line_qr}, constructs the pre-normal form $N$ and chooses an $N_{h_0}$. This phase depends on whether we can invoke the more efficient \cref{thm:MHNFlowrank} ($r \le m+1$) or we need the full power of \cref{thm:MHNF}. In the former case, we can take $N = \A_{(1)}$, while in the latter case we need to take $N$ equal to the left null space of $R_I(d,e)$. How we choose the degree $(d,e)$ in line \ref{line_alg_degree} is explained in \cref{sec:regorder3}.
The matrix $R_I(d,e) \in \C^{\HF_S(d,e) \times s \HF_S(d',e')}$ can be constructed efficiently column-by-column without polynomial multiplication. Indeed, by \cref{eq:alphabasis} it suffices to copy the coefficients of $f_i$ relative to the monomial basis of $S_{(1,1)}$ into the correct rows; see also \cref{ex:running3} below. The left null space $N$ can be extracted from the last $r$ columns of the $U$-factor in the SVD $R_I(d,e)=USV^H$, where $\cdot^H$ denotes the conjugate transpose. In our implementation, the matrix $N_{h_0} \in \C^{r \times \HF_S(1,1)}$ is chosen by sampling the coefficients of $h_0 \in S_{(d',e')}$ identically and independently distributed from a Gaussian distribution. With probability $1$, $h_0$ satisfies $h_0(\gamma_i) \ne 0$ for all $i$; hence, this is a valid choice of $h_0$.

The next phase of the algorithm, in lines \ref{line_qr}--\ref{line_qr2}, chooses a basis $B$. 
Although in theory \cref{thm:MHNF} enables us to choose any $B$ such that $(N_{h_0})_{|B}^{-1}$ is invertible, \cite{telen2018solving} showed that for reasons of numerical stability it is crucial to choose $B$ such that $(N_{h_0})_{|B}$ is \emph{well-conditioned}. In practice, such a subspace $B$ can be found using a QR decomposition with optimal column pivoting or by using the SVD \cite[Chapter 4]{telen2020thesis}. We stated the QR approach in \cref{alg:mhnfalg}.

The multiplication matrices are constructed straightforwardly as the formula suggests in lines \ref{line_mm_start} to \ref{line_mm_end}. Note that the upper triangular matrix $(N_{h_0})_{|B}$ does not need to be inverted explicitly, rather the system can be solved by backsubstitution.

In line \ref{step:betasfound}, the matrices $M_{(h x_k)/h_0}$ need to be simultaneously diagonalized, as we have that $M_{(h x_k)/h_0} = V^{-1} \operatorname{diag}(\beta_k) V$. We compute $V$ from a random linear combination of the $M_{(h x_k)/h_0}$'s, and then extract $\beta_k$ as the diagonal entries from the (approximately) diagonalized matrix $V^{-1} M_{(h x_k)/h_0} V$.

The system in line \ref{line_systemker} is solved efficiently by noting that the coefficients of $f_j$ can be arranged in a matrix $F_j$ of size $(m+1) \times (n+1)$, so that $f_j(x,y) = x^\top F_j y$. Hence, \eqref{eq:linsysgamma} boils down to computing the kernel of $Ay = 0$ where the rows of $A \in \C^{s \times (n+1)}$ are the row vectors $\beta_i^\top F_j$. Note that by \cref{assum:vanishingideal}, $\ker A$ is spanned by $\gamma_i$. In line \ref{line_newton} the obtained solution $(\beta_i,\gamma_i)$ is refined using standard Newton iterations. Here the pseudo-inverse of the Jacobian matrix of $f_1, \ldots, f_s$ is used. This matrix has full rank if and only if $(\beta_i, \gamma_i)$ has multiplicity one. This can be used to check condition (iii) in \cref{lem:zerodim}.
\color{black}

\begin{table}[t]
\footnotesize
\setlength{\tabcolsep}{4pt}
\centering
\begin{tabular}{c|ccccc}
\backslashbox[1mm]{\small $i$}{\small $j$}
  & 0 & 1 & 2  & 3 & $\hdots$ \\ \hline
0 & 1 & 3 & 6 & 10 & $\hdots$ \\
1 & 3 & 4 & 4 & 4 & $\hdots$ \\
2 & 6 & 4 & 4 & 4 &$ \hdots$ \\
3 & 10 & 4 & 4 & 4 & $\hdots$ \\
$\vdots$ & $\vdots$ & $\vdots$ & $\vdots$ & $\vdots$ & $\ddots$
\end{tabular}
\qquad \qquad
\begin{tabular}{c|ccccccc}
\backslashbox[0.2mm]{\small $i$}{ \small $j$}
  & 0 & 1 & 2  & 3 & 4 & 5 & $\hdots$ \\ \hline
0 &1 & 3 & 6 & 10 & 15 & 21 & $\hdots$ \\
1 & 7 & \underline{12} & 15 & 16 & 15 & 12 & $\hdots$ \\
2 & 28 & 21 & 15 & 12 & 12 & 12 &$ \hdots$ \\
3 & 84 & \underline{12} & 12 & 12 & 12 & 12& $\hdots$ \\
$\vdots$ & $\vdots$ & $\vdots$ & $\vdots$ & $\vdots$ & $\vdots$ & $\vdots$ & $\ddots$
\end{tabular}
\caption{Hilbert functions $\HF_{S/I}(i,j)$ from \cref{ex:running3} (left) and \cref{ex:bigleap} (right) for small values of $i,j$. }
\label{tab:hfex4}
\end{table}

\begin{remark}
As pointed out to us by an anonymous referee and Bernard Mourrain, it is possible to replace the left nullspace computation in line 11 by a smaller linear system of equations. This interpretation corresponds to the \emph{flat extension} of the quasi-Hankel operators in \cite{bernardi2013general}. The pre-normal form $N$ can be chosen such that $N_{h_0}$ is given by $\A_{(1)}$. This determines $N$ on the $\HF_S(1,1)$-dimensional subspace $h_0 \cdot S_{(1,1)}$. For $(a',b')$ such that $|a'| = d'$ and $|b'| = e'$, the restriction $N_{|x^{a'}y^{b'} \cdot S_{(1,1)}}$ should satisfy $N_{|x^{a'}y^{b'} \cdot S_{(1,1)}}(x^{a'}y^{b'} \cdot f_i) = 0$, $i = 1, \ldots, s$. These linear conditions determine the pre-normal form $N$ on the remaining $(\HF_S(d,e) - \HF_S(1,1))$-dimensional vector space $\HF_S(d,e)/ (h_0 \cdot S_{(1,1)})$. 

This observation allows to perform the main linear algebra computations on a matrix of size $(\HF_S(d,e) - \HF_S(1,1)) \times s\, \HF_S(d',e')$, which is smaller than the size of $R_I(d,e)$. Note that the number of rows is reduced by a factor of $1- \HF_S(1,1)/\HF_S(d,e) \approx 1 - \frac{1}{m^{d-1} n^{e-1}}$. Note that the resulting pre-normal form $N$ does not represent an orthogonal projection along $I_{(d,e)}$, and we expect that this may impact the numerical accuracy. The implementation and further investigation of this procedure are beyond the scope of this paper.
\end{remark}

\subsection{Some examples} We now present two illustrative examples. The first one shows how to use the techniques explained above on the tensor $\A$ in \cref{ex:running1}.
\begin{example}[\Cref{ex:running1}, continued] \label{ex:running3}
The Hilbert function of $S/I$, where $I$ is generated by the five $f_i$'s from \cref{ex:running1}, is shown in \cref{tab:hfex4} for small degrees. From $\HF_{S/I}((1,1) + (d',e')) = r = 4$ for $(d',e') \in \N^2$ we see that every degree $(d,e) \geq (1,1)$ leads to a pre-normal form.
Using $(d',e') = (1,0)$, we obtain the pre-normal form $N$ as the cokernel of $R_I(2,1)\in\R^{18 \times 15}$, whose transpose is
\footnotesize
\[ \makeatletter\setlength\BA@colsep{4.5pt}\makeatother
\begin{blockarray}{ccccccccccccccccccc}
& \rotbra{x_0^2y_0} & \rotbra{x_0^2y_1}&\rotbra{x_0^2y_2}&\rotbra{x_0x_1y_0}&\rotbra{x_0x_1y_1}&\rotbra{x_0x_1y_2}&\rotbra{x_0x_2y_0}&\rotbra{x_0x_2y_1}&\rotbra{x_0x_2y_2}&\rotbra{x_1^2y_0}&\rotbra{x_1^2y_1}&\rotbra{x_1^2y_2}&\rotbra{x_1x_2y_0}&\rotbra{x_1x_2y_1}&\rotbra{x_1x_2y_2}&\rotbra{x_2^2y_0}&\rotbra{x_2^2y_1}&\rotbra{x_2^2y_2} \\
\begin{block}{c[cccccccccccccccccc]}
x_0f_1 & & & & \matminus1 & 1 & & & & & & & & & & & & & \\
x_1f_1 & & & & & & & & & & \matminus1 & 1 & & & & & & & \\
x_2f_1 & & & & & & & & & & & & & \matminus1 & 1 & & & & \\
x_0f_2 & & & \matminus1 & \matminus1 & & 1 & & & & & & & & & & & & \\
x_1f_2 & & & & & & \matminus1 & & & & \matminus1 & & 1 & & & & & & \\
x_2f_2 & & & & & & & & & \matminus1 & & & & \matminus1 & & 1 & & & \\
x_0f_3 & \matminus2 & & & & & & 1 & & & & & & & & & & & \\
x_1f_3 & & & & \matminus2 & & & & & & & & & 1 & & & & & \\
x_2f_3 & & & & & & & \matminus2 & & & & & & & & & 1 & & \\
x_0f_4 & & \matminus1 & & & & & & 1 & & & & & & & & & & \\
x_1f_4 & & & & & \matminus1 & & & & & & & & & 1 & & & & \\
x_2f_4 & & & & & & & & \matminus1 & & & & & & & & & 1 & \\
x_0f_5 & & & \matminus2 & & & & & & 1 & & & & & & & & & \\
x_1f_5 & & & & & & \matminus 2 & & & & & & & & & 1 & & & \\
x_2f_5 & & & & & & & & & \matminus2 & & & & & & & & & 1\\
\end{block}
\end{blockarray}.
\]
\normalsize
The missing entries represent zeros. The row indexed by $x_2f_3$ has entry $-2$ in the column indexed by $x_0x_2y_0$ and $1$ in the column indexed by $x_2^2y_0$. This comes from $x_2f_3 = -2x_0x_2y_0 + x_2^2y_0$.
The cokernel of $R_I(2,1)$ can be obtained, for instance, from the full SVD. We set  $h_0 = x_0 + x_1 + x_2$ and use the subspace $B$ spanned by $\B= \{x_0y_0, x_0y_1, x_0y_2, x_1y_0\}$. The numerical approximations of the eigenvalues of $M_{x_0/h_0}, M_{x_1/h_0}$ and $M_{x_2/h_0}$, found using Julia, are the rows of
\begin{center}
\begin{minipage}{0.6\textwidth}
\begin{verbatim}
 -1.03745e-16  0.25   0.333333      0.5
  1.0          0.25  -2.48091e-16  -3.16351e-16
 -1.64372e-16  0.5    0.666667      0.5
\end{verbatim}
\end{minipage}
\end{center}
These approximate the evaluations of $x_i/h_0$ at $\z_4, \z_3,\z_1,\z_2$ (in that order, from left to right). Consequently, the columns in the display above are homogeneous coordinates for $\beta_4, \beta_3,\beta_1,\beta_2$. The $\gamma_i$'s can then be obtained by solving the linear system \eqref{eq:linsysgamma} of 5 equations in 3 unknowns. The left eigenvectors of the matrices $M_{x_j/h_0}$ are the columns of 
\begin{center}
\begin{minipage}{0.7\textwidth}
\begin{verbatim}
  1.10585e-17   1.58104e-15   1.0          -9.8273e-16
  1.69823e-17  -2.38698e-15   2.02579e-15  -1.0
 -1.42128e-16  -1.0           1.1188e-15   -3.681e-16
  1.0           3.81158e-17  -6.61522e-16   6.55477e-16
\end{verbatim}
\end{minipage}
\end{center}
corresponding to evaluation (up to scale) of $\B$ at $\z_4,\z_3,\z_1,\z_2$.
\end{example}

The ideal $I$ in the previous example has the property that $\HF_{S/I}(1+d',1+e') = r$ for all $(d',e') \in \N^2$. Our next example shows that this is not the case in general. 

\begin{example}[A format for which $\HF_{S/I}(2,1) \neq r$] \label{ex:bigleap}
In \cref{ex:running3} we could take any $(d',e') \in \N^2$ to compute a pre-normal form. However, it may be necessary to take bigger leaps in $\N^2$ such that $\HF_{S/I}((1,1) + (d', e')) = r$. As a concrete example, consider a rank-12 tensor $\A \in \C^{12} \otimes \C^7 \otimes \C^{3}$ with the decomposition 
\[ 
\A = \sum_{i=1}^{12} \alpha_i \otimes \beta_i \otimes \gamma_i,
\]
where $\beta_1, \ldots, \beta_{12}$ are the columns of a generic $7 \times 12$ matrix, $\gamma_1, \ldots, \gamma_{12}$ are the columns of a generic $3 \times 12$ matrix and $\alpha_1, \ldots, \alpha_{12}$ are the columns of any invertible $12 \times 12$ matrix. The Hilbert function of $S/I$ where $I = \ideal{\ker \A_{(1)}}$ is shown, for small degrees, in \cref{tab:hfex4}.
By \cref{prop:checkhf}, a possible choice for $(d',e')$ is $(2,0)$; this is underlined in the left part of \cref{tab:hfex4} . Other examples are $(2,1), (2,2), (1,2), (0,4)$. Some noteworthy non-examples are $(1,0), (1,1), (0,1)$. In \cref{sec:regorder3}, we investigate choices of $(d',e')$ of the form $(d',0)$. Our results will explain why, in this example, $d' = 1$ does not work, but $d' = 2$ does.   
\end{example}

\section{Regularity} \label{sec:regorder3}
One key step of the proposed \algoname{} algorithm has not been investigated. As explained in the previous section (see also line \ref{line_alg_degree} of \cref{alg:mhnfalg}), we should determine a correct degree $(d,e)$. This choice has a major impact on the computational complexity of the proposed algorithm. Indeed, it determines the dimensions of the graded resultant matrix $R_I(d,e)$ from \cref{eq:resmap} whose left nullspace is required. 
The goal of this section is determining which degree $(d,e)$ is needed for \cref{thm:MHNF} to apply. From this we can then deduce our algorithm's computational complexity. 

As before, let $\A$ be a tensor as in \cref{eq:rankdecomp} that satisfies \cref{assum:vanishingideal}, and let the $\N^2$-graded ring from \eqref{eq:grading} be denoted by $S = \C[x_0, \ldots, x_m, y_0, \ldots, y_n]$. We assume that $r>m+1$, for otherwise \cref{thm:MHNFlowrank} applies and no choice of $(d,e)$ is required.

To compute $\beta_i$ and $\gamma_i$ via \cref{thm:MHNF}, we need to compute a pre-normal form on $S_{(d,e)}$ for $(d,e) \neq (1,1)$ and $(d',e') \geq (0,0)$. \Cref{prop:checkhf} tells us that we must find such a tuple $(d,e)$ for which additionally $\HF_{S/I}(d,e) = r$, where $I$ is the ideal $\ideal{ \ker \A_{(1)}}$. Motivated by this, we make the following definition. 

\begin{definition} \label{def:reg}
For a homogeneous ideal $J \subset S$ such that $J = \ideal{J_{(1,1)}}$ and $V_X(J)$ consists of $r$ points with multiplicity one, we define the \emph{regularity} of $J$ to be the set
$$ \Reg(J) = \{(d,e) \in \N^2 ~|~ (d,e) \geq (1,1) \text{ and } \HF_{S/J}(d,e) = r\}.$$
\end{definition}

Hence, our task is to find a tuple $(d,e) \in \Reg(I) \setminus \{(1,1)\}$. Recall that such a tuple exists by \cref{lem:hf}. 
In this section, for given $\ell, m, n$ and $r$ satisfying \eqref{eqn_rank_condition}, we conjecture an explicit formula for $d$ and $e$ so that $(d,1), (1,e) \in \Reg(I) \setminus \{(1,1)\}$ for generic tensors of this format and rank. We prove it in many practical cases.

Because the results in this section are of independent interest for solving structured, overdetermined systems of polynomial equations, we formulate them in a slightly more general context. The first statement of this section, \cref{prop:deto11}, is concerned with homogeneous ideals $J$ of $S$ that are generated by elements of degree $(1,1)$. After that, we specialize to a particular type of such $(1,1)$-generated ideals. More precisely, to a tuple $Z = (\z_1, \ldots, \z_r) \in X^r$ we associate an ideal $J(Z)$ which is generated by elements of degree $(1,1)$, and we investigate its Hilbert function (\cref{cor:descIperp}). In our tensor setting, we will have $J(Z) = I = \ideal{\ker \A_{(1)}}$. After pointing out in \cref{lem:Jdefpoints} that for $r \leq mn$, most configurations $Z = (\z_1, \ldots, \z_r) \in X^r$ lead to an ideal $J(Z)$ such that $V_X(J(Z)) = \{ \z_1, \ldots, \z_r \}$, where each of the $\z_i$ occurs with multiplicity one, we use \cref{cor:descIperp} to characterize $\Reg(J(Z))$ in \cref{thm:boundr,thm:conjcases,conj:reg}.

For $a = (a_0, \ldots, a_m) \in \N^{m+1}$, $b = (b_0, \ldots, b_n) \in \N^{n+1}$ we write $\d_{a,b} : S \rightarrow S$ for the differential operator
\[
\d_{a,b} = \frac{1}{a_0! \cdots a_{m}! b_0! \cdots b_{n}!} \frac{\d^{|a| + |b|}}{\d x_0^{a_0} \cdots \d x_{m}^{a_{m}} \d y_0^{b_0} \cdots \d y_{n}^{b_{n}}},
\]
such that the basis of $S_{(d,e)}^\vee$ dual to $\{x^a y^b ~|~ |a| = d, |b| = e \}$ is given by $ \{ \d_{a,b} ~|~ |a| = d, |b| = e \}$. We write $e_k$ for the standard basis vector $(0, \ldots, 1, \ldots, 0)$ with a $1$ in the $(k+1)$-st position, such that
\[
\d_{e_k,e_l} = \frac{\d^2}{\d x_k \d y_l}, \qquad 0 \leq k \leq m,~ 0 \leq l \leq n.
\]
Note that this differs from the shorter notation $\d_{kl}$ used in previous sections to have a general notation for derivatives of arbitrary order.
For $(d,e) \in \N^2$ and $J \subset S$ a homogeneous ideal, the linear space $J_{(d,e)}^\perp \subset S_{(d,e)}^\vee$ is defined as 
\[
J_{(d,e)}^\perp = \{v \in S_{(d,e)}^\vee ~|~ v(f) = 0 \text{ for all } f \in I \}.
\]
It follows from basic linear algebra that $J_{(d,e)}^\perp \simeq (S/J)_{(d,e)}$, such that $\HF_{S/J}(d,e) = \dim_\C J_{(d,e)}^\perp$. As before, we denote $(d',e') = (d-1,e-1)$. In order to simplify the notation in this section, we will use the abbreviations 
\[ 
\sumab = \sum_{\substack{|a| = d \\ |b| = e}}, \qquad \sumapbp = \sum_{\substack{|a'| = d' \\ |b'| = e'}}, \qquad \sumkl = \sum_{\substack{0 \leq k \leq m \\ 0 \leq l \leq n}}.
\]
We focus on ideals $J$ that are generated by elements of degree $(1,1)$. In this case, a functional belongs to $J_{(d,e)}^\perp$ if and only if it induces functionals in $J_{(1,1)}^\perp$.

\begin{proposition} \label{prop:deto11}
Let $J \subset S$ be a homogeneous ideal such that $J = \ideal{J_{(1,1)}}$. An element $v =  \sumab c_{a,b} \d_{a,b} \in S_{(d,e)}^\vee$ is contained in $J_{(d,e)}^\perp$ if and only if 
\[
\sumkl c_{a'+e_k, b' + e_l} \d_{e_k,e_l} \in J_{(1,1)}^\perp \quad  \text{ for all } (a',b') \text{ such that } |a'| = d', |b'| = e'.
\]
\end{proposition}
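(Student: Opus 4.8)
The plan is to prove the two directions separately, exploiting the fact that $J$ is generated in degree $(1,1)$ so that its degree-$(d,e)$ part is spanned by the explicit products $x^{a'}y^{b'} f$ with $|a'|=d'$, $|b'|=e'$ and $f \in J_{(1,1)}$, exactly as in \cref{eq:alphabasis}. The key computational identity I would establish first is the ``Leibniz-type'' formula relating the pairing of $v = \sumab c_{a,b}\,\d_{a,b}$ against $x^{a'}y^{b'}f$ to the pairing of an induced functional against $f$. Concretely, if $f = \sumkl \lambda_{k,l}\, x_k y_l \in S_{(1,1)}$, then $x^{a'}y^{b'}f = \sumkl \lambda_{k,l}\, x^{a'+e_k} y^{b'+e_l}$, and since $\{\d_{a,b}\}$ is the dual basis to $\{x^a y^b\}$, one gets
\[
v(x^{a'}y^{b'}f) = \sumkl \lambda_{k,l}\, c_{a'+e_k,\,b'+e_l} = \Bigl( \sumkl c_{a'+e_k,\,b'+e_l}\,\d_{e_k,e_l} \Bigr)(f).
\]
Denote the inner functional by $v_{a',b'} := \sumkl c_{a'+e_k,\,b'+e_l}\,\d_{e_k,e_l} \in S_{(1,1)}^\vee$. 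This display is the whole engine of the proof.

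For the ``if'' direction: suppose $v_{a',b'} \in J_{(1,1)}^\perp$ for every admissible $(a',b')$. An arbitrary element of $J_{(d,e)}$ is a $\C$-linear combination of the generators $x^{a'}y^{b'}f$ with $f \in J_{(1,1)}$ (this is where I invoke that $J=\ideal{J_{(1,1)}}$, so $J_{(d,e)}$ is spanned by \cref{eq:alphabasis} — taking $f_i$ to range over a basis of $J_{(1,1)}$). By the identity above, $v(x^{a'}y^{b'}f) = v_{a',b'}(f) = 0$ since $f \in J_{(1,1)}$. Summing over the linear combination gives $v(g) = 0$ for all $g \in J_{(d,e)}$, i.e. $v \in J_{(d,e)}^\perp$. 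For the ``only if'' direction: suppose $v \in J_{(d,e)}^\perp$. Fix an admissible pair $(a',b')$ and an arbitrary $f \in J_{(1,1)}$. Then $x^{a'}y^{b'}f \in J_{(d,e)}$, so $0 = v(x^{a'}y^{b'}f) = v_{a',b'}(f)$ by the identity. As $f$ ranges over all of $J_{(1,1)}$, this says precisely $v_{a',b'} \in J_{(1,1)}^\perp$, and $(a',b')$ was arbitrary.

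I expect essentially no serious obstacle here: the statement is a bookkeeping lemma and the only thing requiring care is the monomial/dual-basis combinatorics in the displayed identity — in particular making sure the substitution $x^{a'}y^{b'}\cdot x_k y_l = x^{a'+e_k}y^{b'+e_l}$ is tracked correctly and that $\d_{a,b}$ really is the dual basis element (which is built into the normalization $1/(a_0!\cdots b_n!)$ in the definition of $\d_{a,b}$). One cosmetic subtlety worth a sentence in the writeup: it suffices to check the condition on the spanning set $J_{(1,1)}$ rather than a basis, but since we are free to pick $f_i$ to be a basis of $J_{(1,1)}$ the two formulations coincide, matching the phrasing ``for all $(a',b')$'' in the statement. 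I would present the identity as a displayed equation, then the two implications each in two or three lines.
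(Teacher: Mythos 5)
Your proof is correct and follows essentially the same route as the paper: both reduce membership in $J_{(d,e)}^\perp$ to pairing $v$ against the spanning set $\{x^{a'}y^{b'}f\}$ of $J_{(d,e)}$ and identify $v(x^{a'}y^{b'}f)$ with $v_{a',b'}(f)$. The only cosmetic difference is that the paper tests against a general $h \in S_{(d',e')}$ and extracts the same identity via Leibniz' rule, whereas you specialize $h$ to monomials from the outset, which is equivalent since $\d_{a',b'}(x^{a''}y^{b''}) = \delta_{a'a''}\delta_{b'b''}$.
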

\begin{proof}
Since $J = \ideal{J_{(1,1)}}$, an element $v = \sumab c_{a,b} \d_{a,b} \in S_{(d,e)}^\vee$ is contained in $J_{(d,e)}^\perp$ if and only if $v(hf) = 0$ for all $f \in J_{(1,1)}$ and all $h \in S_{(d',e')}$. Using Leibniz' rule, we find 
\[
0 =  v(hf) = \sumab c_{a,b} \d_{a,b} (hf) = \sumab c_{a,b} \sumkl \d_{a-e_k,b-e_l}(h) ~ \d_{e_k,e_l}(f),
\]
with the convention that $\d_{a,b} = 0$ whenever $\min (a) < 0$ or $\min (b) < 0$. 
Regrouping the terms in this expression gives 
\[ 
0 = \sumapbp \d_{a',b'}(h) \sumkl c_{a'+e_k, b' + e_l} \d_{e_k,e_l}(f) \quad \text{ for all $h \in S_{(d',e')}, f \in J_{(1,1)}$}.
\]
This proves the statement.
\end{proof}

In our tensor rank decomposition setting, we are mainly interested in investigating the Hilbert function for $(1,1)$-generated ideals defined by point configurations in $X = \PP^m \times \PP^n$. To that end, fix $r$ points $Z = (\z_1, \ldots, \z_r) \in X^r$ and let $\z_i = (\beta_i, \gamma_i)$. We denote $w_i = (\beta_i \otimes \gamma_i)^\top \in S_{(1,1)}^\vee$ such that $w_i(f) = f(\beta_i,\gamma_i)$ for $f \in S_{(1,1)}$.\footnote{The notation is similar to \cref{sec:tensortopol}. Here we omit the restriction to the subspace $B$ (or, equivalently, we take $B = S_{(1,1)}$). We also drop the transpose on $w_i$ as we will think of them as column vectors instead of row vectors in this section.} In coordinates, the $w_i$ are
$ w_i = \sumkl \beta_{ik} \gamma_{il} \d_{e_k,e_l}.$
To the point configuration $Z$ we associate an ideal $J(Z) \subset S$ by setting
\begin{equation*} \label{eq:defJ}
J(Z)_{(1,1)}^\perp = \Span_\C( w_1, \ldots, w_r) \quad \text{and} \quad J(Z) = \ideal{ J(Z)_{(1,1)} }.
\end{equation*}
Note that the ideal $I = \ideal{\ker{\A_{(1)}}}$ from previous sections arises in this way.\footnote{For the reader who is familiar with algebraic geometry, we note that this is our motivation for associating the ideal $J(Z)$ to $Z$, instead of the usual vanishing ideal of the points in $Z$. These are different ideals, as $J(Z)$ is usually not saturated with respect to the irrelevant ideal of $S$.} We denote $W \subset X^r$ for the Zariski-open subset in which $w_1, \ldots, w_r$ are $\C$-linearly independent. If $r \leq \HF_S(1,1)$, $W \subset X^r$ is non-empty, and therefore dense in the Euclidean topology.\footnote{This follows from the fact that the Segre variety is not contained in a hyperplane.}  
We have the following consequences of \cref{prop:deto11}.
\begin{corollary} \label{cor:descIperp}
Let $Z = (\z_1, \ldots, \z_r) \in X^r$ and let $w_1, \ldots, w_r$ and $J(Z)$ be as above. For the maps $\iota: S_{(d,e)}^\vee \hookrightarrow S_{(1,1)}^\vee \otimes S_{(d',e')}^\vee$ and $\M: (S_{(d',e')}^\vee)^r \rightarrow S_{(1,1)}^\vee \otimes S_{(d',e')}^\vee$ given by
\[
\iota \left ( \sumab c_{a,b} \d_{a,b} \right ) = \sumapbp \sumkl c_{a'+e_k, b' + e_l} \d_{e_k, e_l} \otimes \d_{a',b'}, \quad \M(v_1, \ldots, v_r) = \sum_{i=1}^r w_i \otimes v_i,
\]
we have
$ \iota \left ( J(Z)_{(d,e)}^\perp \right ) = \im \iota\, \cap \,  \im \M.$ Moreover, if $Z \in W$, then we have that $\HF_{S/J(Z)}(d,e) = \dim_\C \left( \M^{-1}(\im \iota) \right),$
where 
\[
\M^{-1}(\im \iota) =  \left \{ (v_1,\ldots,v_r) \in  (S_{(d',e')}^\vee)^r ~|~ \M(v_1,\ldots,v_r) \in \im \iota \right \}.
\]
\end{corollary}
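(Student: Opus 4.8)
The plan is to translate the characterization of $J(Z)_{(d,e)}^\perp$ from \cref{prop:deto11} into the language of the two maps $\iota$ and $\M$. First I would record two basic facts about $\iota$. It is injective (a functional $v = \sumab c_{a,b}\d_{a,b}$ is zero iff all its coefficients $c_{a,b}$ vanish, and each $c_{a,b}$ with $|a|=d,|b|=e$ appears as some $c_{a'+e_k,b'+e_l}$ in the expansion of $\iota(v)$, since one can write $(a,b) = (a'+e_k,b'+e_l)$ whenever $a_k\geq 1$ and $b_l\geq 1$, which holds for at least one pair $(k,l)$ as $|a|=d\geq 1$ and $|b|=e\geq 1$). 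Its image $\im\iota$ consists precisely of those tensors $\sum_{k,l}\sum_{a',b'} d_{k,l,a',b'}\, \d_{e_k,e_l}\otimes\d_{a',b'}$ whose coefficients satisfy the ``consistency'' relations that make $d_{k,l,a',b'}$ depend only on $(a'+e_k, b'+e_l)$; this is the standard ``apolarity/catalecticant'' compatibility, and it is what lets us identify $\iota(v)$ with a genuine element of $S_{(d,e)}^\vee$.

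Next I would unwind \cref{prop:deto11}. That proposition says $v\in J(Z)_{(d,e)}^\perp$ iff for every $(a',b')$ with $|a'|=d',|b'|=e'$ the ``slice'' $\sum_{k,l} c_{a'+e_k,b'+e_l}\d_{e_k,e_l}$ lies in $J(Z)_{(1,1)}^\perp = \Span_\C(w_1,\dots,w_r)$. Equivalently, there exist scalars $\lambda_i^{(a',b')}$, $i=1,\dots,r$, such that $\sum_{k,l} c_{a'+e_k,b'+e_l}\d_{e_k,e_l} = \sum_{i=1}^r \lambda_i^{(a',b')} w_i$ for every $(a',b')$. Packaging the $\lambda$'s into functionals $v_i := \sum_{a',b'} \lambda_i^{(a',b')}\d_{a',b'} \in S_{(d',e')}^\vee$, the $r$ equations (one per $(a',b')$) say exactly that $\iota(v) = \sum_{a',b'}\left(\sum_{k,l}c_{a'+e_k,b'+e_l}\d_{e_k,e_l}\right)\otimes\d_{a',b'} = \sum_{i=1}^r w_i\otimes v_i = \M(v_1,\dots,v_r)$. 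Hence $v\in J(Z)_{(d,e)}^\perp$ iff $\iota(v) \in \im\M$; since also trivially $\iota(v)\in\im\iota$, this gives the inclusion $\iota(J(Z)_{(d,e)}^\perp) \subseteq \im\iota\cap\im\M$. Conversely, any element of $\im\iota\cap\im\M$ is of the form $\iota(v)$ for a unique $v\in S_{(d,e)}^\vee$ (injectivity of $\iota$) and, being in $\im\M$, witnesses membership $v\in J(Z)_{(d,e)}^\perp$ via the displayed slice equations read backwards. This proves $\iota(J(Z)_{(d,e)}^\perp) = \im\iota\cap\im\M$.

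For the moreover part, assume $Z\in W$, so $w_1,\dots,w_r$ are $\C$-linearly independent. I claim $\M$ is then injective: if $\sum_{i=1}^r w_i\otimes v_i = 0$ in $S_{(1,1)}^\vee\otimes S_{(d',e')}^\vee$, then linear independence of the $w_i$ forces every $v_i = 0$ (extend $w_1,\dots,w_r$ to a basis of $S_{(1,1)}^\vee$; the tensor's components in this basis are exactly the $v_i$). Therefore $\M$ restricts to a linear isomorphism $\M^{-1}(\im\iota)\xrightarrow{\sim}\im\M\cap\im\iota = \iota(J(Z)_{(d,e)}^\perp)$, and since $\iota$ is injective this space has dimension $\dim_\C J(Z)_{(d,e)}^\perp = \HF_{S/J(Z)}(d,e)$, where the last equality is the already-noted identification $J_{(d,e)}^\perp\simeq (S/J)_{(d,e)}$. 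Chaining the two isomorphisms gives $\HF_{S/J(Z)}(d,e) = \dim_\C\M^{-1}(\im\iota)$, as desired.

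The main obstacle I anticipate is bookkeeping rather than conceptual: being careful with the ``boundary'' convention $\d_{a,b}=0$ when some coordinate is negative, so that the reindexing $(a,b)\leftrightarrow(a'+e_k,b'+e_l)$ in the definition of $\iota$ exactly matches the Leibniz expansion used in the proof of \cref{prop:deto11}, and checking that a fixed monomial $x^ay^b$ of degree $(d,e)$ can always be split as $x^{a'}y^{b'}\cdot x_k y_l$ in at least one way (which is why $\iota$ picks up every coefficient and is injective). None of this requires $r\leq\HF_S(1,1)$ except in the ``moreover'' clause, where $Z\in W$ — i.e. linear independence of the $w_i$, which by the stated footnote is a nonempty (hence dense) condition exactly when $r\leq\HF_S(1,1)$ — is what makes $\M$ injective and turns the description of $\iota(J(Z)_{(d,e)}^\perp)$ into a clean dimension count.
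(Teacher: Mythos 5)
Your proposal is correct and follows essentially the same route as the paper: unwind \cref{prop:deto11} slice by slice, package the coefficients $\lambda_i^{(a',b')}$ into the functionals $v_i$ so that the membership condition becomes exactly $\iota(v)=\M(v_1,\ldots,v_r)$, and then use injectivity of $\iota$ (and, for the dimension count, of $\M$ when $Z\in W$) to identify $\M^{-1}(\im\iota)$ with $J(Z)_{(d,e)}^\perp$. The only difference is that you spell out the injectivity of $\iota$ and of $\M$ in more detail than the paper does, which is harmless.
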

\begin{proof}
By \cref{prop:deto11}, $ v = \sumab c_{a,b} \d_{a,b}$ is an element of $J(Z)_{(d,e)}^\perp$ if and only if for all $a',b'$ such that $|a'| = d', |b'| = e'$, there exist $v_{i,a',b'} \in \C$ such that 
\begin{equation} \label{eq:lincomb}
\sumkl c_{a'+e_k, b' + e_l} \d_{e_k,e_l} = \sum_{i=1}^r v_{i,a',b'} w_i.
\end{equation}
Writing $v_i = \sumapbp v_{i,a',b'} \d_{a',b'} \in S_{(d',e')}^\vee$ and writing \eqref{eq:lincomb} in matrix format, we see that \eqref{eq:lincomb} is equivalent to the following equality in $S_{(1,1)}^\vee \otimes S_{(d',e')}^\vee$:
\begin{equation} \label{eq:mtxeq}
 \sumapbp \sumkl c_{a'+e_k, b' + e_l} \d_{e_k, e_l} \otimes \d_{a',b'} = \sum_{i=1}^r w_i \otimes v_i.
\end{equation}

Linear independence of $w_1, \ldots, w_r$ implies that $\M$ is injective. We have that injectivity of $\iota$ and $\M$ implies along with $\HF_{S/J(Z)}(d,e) = \dim_\C  J(Z)_{(d,e)}^\perp$ that
\[
\dim_\C  J(Z)_{(d,e)}^\perp = \dim_\C \iota \left ( J(Z)_{(d,e)}^\perp \right ) = \dim_\C ( \im \iota \, \cap \, \im \M ) = \dim_\C \left( \M^{-1}(\im \iota) \right).
\]
This concludes the proof.
\end{proof}

\begin{corollary} \label{cor:keratleastr}
Let $Z = (\z_1, \ldots, \z_r) \in W \subset X^r$ and $(d, e) \geq (1,1)$. Then, $\HF_{S/J(Z)}(d,e) \geq r$. 
\end{corollary}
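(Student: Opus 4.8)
The plan is to apply the previous corollary and argue by exhibiting an explicit $r$-dimensional subspace of $\M^{-1}(\im \iota)$. By \cref{cor:descIperp}, since $Z \in W$, we have $\HF_{S/J(Z)}(d,e) = \dim_\C \M^{-1}(\im \iota)$, so it suffices to produce $r$ linearly independent tuples $(v_1^{(j)}, \ldots, v_r^{(j)}) \in (S_{(d',e')}^\vee)^r$, $j = 1, \ldots, r$, whose images under $\M$ land in $\im \iota$. The natural candidates come from the points themselves: for each index $j$, consider the tuple that is concentrated on the $j$-th slot, built from the evaluation functional at $\z_j$ raised to the appropriate degree.

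Concretely, first I would recall that for $\z_j = (\beta_j, \gamma_j)$ the functional $w_j = \sumkl \beta_{jk}\gamma_{jl}\,\d_{e_k,e_l} \in S_{(1,1)}^\vee$ is (a scalar multiple of) the composition of the evaluation-at-$\z_j$ map on $S_{(1,1)}$. The key point is that evaluation at a single point $\z_j$ is multiplicative, so on $S_{(d,e)}$ the functional $\widetilde w_j \in S_{(d,e)}^\vee$ defined by $\widetilde w_j(f) = f(\beta_j,\gamma_j)$ (for a fixed choice of homogeneous coordinates) satisfies, for every $h \in S_{(d',e')}$ and every $f \in S_{(1,1)}$, the identity $\widetilde w_j(hf) = h(\z_j)\, f(\z_j)$. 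Unwinding this through the map $\iota$, I would check that $\iota(\widetilde w_j) = w_j \otimes \bar w_j$, where $\bar w_j \in S_{(d',e')}^\vee$ is the degree-$(d',e')$ evaluation functional at $\z_j$; this is exactly the Leibniz/regrouping computation used in the proof of \cref{prop:deto11}, specialized to a rank-one functional. Hence the tuple $(v_1^{(j)}, \ldots, v_r^{(j)})$ with $v_i^{(j)} = \delta_{ij}\,\bar w_j$ satisfies $\M(v_1^{(j)}, \ldots, v_r^{(j)}) = w_j \otimes \bar w_j = \iota(\widetilde w_j) \in \im \iota$, so each such tuple lies in $\M^{-1}(\im \iota)$.

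It then remains to check that these $r$ tuples are $\C$-linearly independent. Because tuple $j$ is supported only in the $j$-th coordinate, a linear relation $\sum_j \lambda_j (v_1^{(j)},\ldots,v_r^{(j)}) = 0$ forces $\lambda_j \bar w_j = 0$ for each $j$; since $\bar w_j \neq 0$ (the evaluation functional at a point is nonzero on $S_{(d',e')}$ whenever $(d',e') \geq (0,0)$, as one can test against a monomial not vanishing at $\z_j$), all $\lambda_j = 0$. Therefore $\dim_\C \M^{-1}(\im \iota) \geq r$, which by \cref{cor:descIperp} gives $\HF_{S/J(Z)}(d,e) \geq r$, as claimed.

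I do not expect a serious obstacle here; the only thing to be careful about is the bookkeeping in identifying $\iota(\widetilde w_j)$ with $w_j \otimes \bar w_j$ — i.e., making sure the combinatorial factors in the definition of $\d_{a,b}$ and the multinomial expansion of $(\beta_j \otimes \gamma_j)$-type functionals match up — but this is precisely the computation already carried out in \cref{prop:deto11}, now applied to the special rank-one functionals $w_i = \widetilde w_i$, so it should go through verbatim. An alternative, essentially equivalent route would be to note directly that $J(Z)_{(1,1)} \subseteq$ (the degree-$(1,1)$ part of the vanishing ideal of the points), hence $V_X(J(Z)) \supseteq \{\z_1,\ldots,\z_r\}$, and then invoke the standard fact that $\HF_{S/J}$ is bounded below by the number of points in $V_X(J)$ when these impose independent conditions; but the explicit subspace argument above is cleaner and self-contained.
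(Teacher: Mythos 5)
Your proposal is correct and follows essentially the same route as the paper: the paper's proof also takes the evaluation functionals $w_i'' \in S_{(d,e)}^\vee$ and $w_i' \in S_{(d',e')}^\vee$ at $\z_i$, observes that $\iota(w_i'') = w_i \otimes w_i'$, and concludes that the $r$ tuples $\M(0,\ldots,w_i',\ldots,0)$ lie in $\im\iota$ and are linearly independent. Your identification $\iota(\widetilde w_j) = w_j \otimes \bar w_j$ and the slot-support argument for independence are exactly the steps the paper carries out.
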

\begin{proof}
The statement follows from \cite[Lemma 5.5.7]{telen2020thesis}. Nevertheless, we give an instructive proof.
Let $w_i' \in S_{(d',e')}^\vee, w_i'' \in S_{(d,e)}^\vee$ be given by $w_i'(f) = f(\beta_i,\gamma_i), w_i''(g) = g(\beta_i,\gamma_i)$ for $f \in S_{(d',e')}, g \in S_{(d,e)}$. Then $\iota ( w_i'') = w_i \otimes w_i'$ and thus  
\[
\M(w_1', 0, \ldots, 0),~ \M(0, w_2', \ldots, 0), ~\ldots, ~\M(0,0,\ldots, w_r')
\]
are all contained in $\im \iota$. Therefore $\M^{-1}(\im \iota)$ contains at least $r$ linearly independent elements, so by \cref{cor:descIperp} we have $\HF_{S/J(Z)}(d,e) \geq r$. 
\end{proof}

The space $S_{(1,1)}^\vee \otimes S_{(d',e')}^\vee$ is identified with the space of matrices of size $\HF_S(1,1) \times \HF_S(d',e')$, where the rows are indexed by $\d_{e_k,e_l}$ for $0 \leq k \leq m$, $0 \leq l \leq n$ and columns are indexed by $\d_{a',b'}$ where $(a',b') \in \N^{m+1} \times \N^{n+1}$ with $|a'| = d', |b'| = e'$. For such a matrix to be contained in $\im \iota$, a collection of partial symmetry conditions needs to be satisfied. For instance, if $(a'+e_k, b'+e_l) = (a'' + e_{k'}, b'' + e_{l'})$, then the entry in the row indexed by $\d_{e_k,e_l}$ and column indexed by $\d_{a',b'}$ should be equal to the entry in the row indexed by $\d_{e_{k'},e_{l'}}$ and column indexed by $\d_{a'', b''}$ (see \cref{ex:reg1}). Matrices in $\im \iota$ are called \emph{catalecticant matrices} \cite[Definition 1.3]{iarrobino1999power}.

We can use \cref{cor:descIperp} to compute the Hilbert function of $S/J(Z)$ via a rank computation of a matrix whose entries are monomials evaluated at the points $(\beta_i,\gamma_i)$. 
This is important for our proof of \cref{thm:conjcases}. It is most easily explained by means of an example.
\begin{example} \label{ex:reg1}
Let $(m,n) = (3,2)$, $(d,e) = (2,1)$ and $r = 6$. We consider the ideal $J(Z)$ defined by the tuple $Z = (\z_1, \ldots, \z_6) \in X^6 = (\PP^3 \times \PP^2)^6$ in the ring $S =\C[x_0,x_1,x_2,x_3,y_0,y_1,y_2]$, where $\z_i = (\beta_i, \gamma_i) = ((\beta_{i0}: \beta_{i1}: \beta_{i2} : \beta_{i3}), (\gamma_{i0}: \gamma_{i1}: \gamma_{i2}))$.  As explained above, we can identify $S_{(1,1)}^\vee \otimes S_{(1,0)}^\vee$ with $12 \times 4$ matrices.
The image of $(v_1, \ldots, v_6) \in (S_{(1,0)}^\vee)^6$, with $v_i = \sum_{q = 0}^3 v_{iq} \d_{e_q,0}$, under $\M$ is 
\begin{equation} \label{eq:mtxM} \small
\makeatletter\setlength\BA@colsep{4.5pt}\makeatother
\begin{bmatrix} \beta_1 \otimes \gamma_1 & \cdots & \beta_6\otimes\gamma_6 \end{bmatrix} \begin{bmatrix}v_1^\top \\ \vdots \\ v_6^\top \end{bmatrix} = 
\begin{blockarray}{cccc}
& \cdots & w_i & \cdots \\
\begin{block}{c[ccc]}
\vdots &  & \vdots \\
\d_{e_k,e_l}& \cdots & \beta_{ik}\gamma_{il} & \cdots \\
\vdots & & \vdots \\
\end{block}
\end{blockarray}  ~
\begin{blockarray}{ccc}
\cdots & \d_{e_j,0} & \cdots \\
\begin{block}{[ccc]}
 & \vdots & \\
\cdots & v_{ij} & \cdots\\
& \vdots & \\
\end{block}
\end{blockarray}.
\end{equation}
On the other hand, the image of $ \sum_{|a| = 2, |b| = 1} c_{a,b} \d_{a,b}$ under $\iota$ is the matrix
\begin{equation} \label{eq:exampleiota} \small
  \iota \left ( \sum_{|a| = 2, |b| = 1} c_{a,b} \d_{a,b} \right ) = 
\begin{blockarray}{ccccc}
& \d_{e_0,0} & \d_{e_1,0} & \d_{e_2,0} & \d_{e_3,0}\\
\begin{block}{c[cccc]}
\d_{e_0,e_0}& c_{2e_0,e_0} & \blue{c_{e_0+e_1,e_0}} & \green{c_{e_0+e_2,e_0}} & \brown{c_{e_0+e_3,e_0}} \\
\d_{e_0,e_1} &  c_{2e_0,e_1} & \blue{c_{e_0+e_1,e_1}} & \green{c_{e_0+e_2,e_1}} & \brown{c_{e_0+e_3,e_1}} \\
\d_{e_0,e_2} &  c_{2e_0,e_2} & \blue{c_{e_0+e_1,e_2}} & \green{c_{e_0+e_2,e_2}} & \brown{c_{e_0+e_3,e_2}} \\
\d_{e_1,e_0} &  \blue{c_{e_0+e_1,e_0}} &  c_{2e_1,e_0} &  \red{c_{e_1+e_2,e_0}} &  \magenta{c_{e_1+e_3,e_0}} \\
\d_{e_1,e_1} &  \blue{c_{e_0+e_1,e_1}} &  c_{2e_1,e_1} &  \red{c_{e_1+e_2,e_1}} &  \magenta{c_{e_1+e_3,e_1}} \\
\d_{e_1,e_2} & \blue{c_{e_0+e_1,e_2}} &  c_{2e_1,e_2} &  \red{c_{e_1+e_2,e_2}} &  \magenta{c_{e_1+e_3,e_2}} \\
\d_{e_2,e_0} & \green{c_{e_0+e_2,e_0}}&  \red{c_{e_1+e_2,e_0}} &  c_{2e_2,e_0} &  \orange{c_{e_2+e_3,e_0}} \\
\d_{e_2,e_1} & \green{c_{e_0+e_2,e_1}}&  \red{c_{e_1+e_2,e_1}} &  c_{2e_2,e_1} &  \orange{c_{e_2+e_3,e_1}} \\
\d_{e_2,e_2} & \green{c_{e_0+e_2,e_2}}&  \red{c_{e_1+e_2,e_2}} &  c_{2e_2,e_2} &  \orange{c_{e_2+e_3,e_2}} \\
\d_{e_3,e_0} & \brown{c_{e_0+e_3,e_0}} &  \magenta{c_{e_1+e_3,e_0}} &  \orange{c_{e_2+e_3,e_0}} &  c_{2e_3,e_0} \\
\d_{e_3,e_1} & \brown{c_{e_0+e_3,e_1}} &  \magenta{c_{e_1+e_3,e_1}} &  \orange{c_{e_2+e_3,e_1}} &  c_{2e_3,e_1} \\
\d_{e_3,e_2} & \brown{c_{e_0+e_3,e_2}} &  \magenta{c_{e_1+e_3,e_2}} &  \orange{c_{e_2+e_3,e_2}} &  c_{2e_3,e_2} \\
\end{block}
\end{blockarray}.
\end{equation}
In order for $\M(v_1,\ldots,v_6)$, i.e., \eqref{eq:mtxM}, to be contained in $\im \iota$, it must be such that the $(\d_{e_1,e_0},\d_{e_0,0})$-entry is equal to the $(\d_{e_0,e_0},\d_{e_1,0})$-entry. This gives a linear condition on the $v_{iq}$. There are 18 such conditions. Let $v_{:,q} = (v_{1q},v_{2q},v_{3q},v_{4q},v_{5q},v_{6q})^\top$ be the column of the second matrix in \eqref{eq:mtxM} indexed by $\d_{e_q,0}$ and let 
$\Gamma = [\gamma_{ij}]$ be the matrix that has the homogeneous coordinates $\gamma_{ij}$, $j=0,\ldots,2$, of $\gamma_i$, $i=1,\ldots,6$, as columns. We also let $H_q = \diag(\beta_{1q}, \ldots, \beta_{6q})$.
The 18 symmetry conditions are 
\[
\underbrace{\begin{bmatrix}
\blue{\Gamma H_1} &  \blue{- \Gamma H_0}\\
\green{\Gamma H_2} & &  \green{-\Gamma H_0}\\
\brown{\Gamma H_3} & & & \brown{-\Gamma H_0}\\
& \red{\Gamma H_2} & \red{-\Gamma H_1}\\
& \magenta{\Gamma H_3} & & \magenta{-\Gamma H_1} \\
& & \orange{\Gamma H_3} & \orange{-\Gamma H_2}
\end{bmatrix}}_{A(Z)} \begin{bmatrix}
v_{:,0}\\v_{:,1}\\v_{:,2}\\v_{:,3}
\end{bmatrix} = 0,
\]
where the colors in the block rows of the coefficient matrix $A(Z)$ correspond to the entries in \eqref{eq:exampleiota} on which they impose relations. 
In other words, the kernel of $A(Z)$ is the vector space $\{(v_1,\ldots,v_6) ~|~ \M(v_1,\ldots,v_6) \in \im \iota \}$ from \cref{cor:descIperp}. Hence, $\HF_{S/J(Z)}(2,1)$ is the corank of $A(Z)$. It is at least $6$, since 
\[
A(Z) \begin{bmatrix}
H_0 & H_1 & H_2 & H_3
\end{bmatrix}^\top= 0.
\]
These null vectors correspond to the $w_i'$ in the proof of \cref{cor:keratleastr}. For generic $Z \in X^r$, the corank of $A(Z)$ is exactly 6, so $\HF_{S/J(Z)}(2,1) = 6$.
\end{example}

From the discussion in \cref{ex:reg1}, we would like to conclude that, generically, $(2,1) \in \Reg(J(Z))$. For this to make sense, i.e., to apply \cref{def:reg}, we need to show that for most configurations $Z$, $J(Z)$ defines $r$ points with multiplicity one. By an argument analogous to \cref{lem:zerodim}, this happens for small enough ranks.

\begin{lemma} \label{lem:Jdefpoints}
Suppose that $r \leq mn$ (this is \cref{eqn_rank_condition} for $\ell = \infty$). There is a Zariski open, dense subset $U \subset X^r$ such that for all $Z = (\z_1, \ldots, \z_r) \in U$, $w_1, \ldots, w_r$ are $\C$-linearly independent and $V_X(J(Z)) = \{\z_1,\ldots,\z_r\}$ consists of $r$ points with multiplicity one. 
\end{lemma}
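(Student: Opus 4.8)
The plan is to reduce the statement to \cref{lem:zerodim} via the correspondence between rank-$r$ tensors and the ideals $J(Z)$. Set $\ell := \max\{m,\,r-1\}$, so that $\ell \ge m \ge n > 0$ and, using the hypothesis $r \le mn$, also $r \le \min\{\ell+1,\,mn\}$. Applying \cref{lem:zerodim} to the format $\C^{\ell+1}\otimes\C^{m+1}\otimes\C^{n+1}$ and rank $r$ furnishes a Zariski-open subset $\widetilde{U}$ of the ambient space whose intersection $U' := \widetilde{U}\cap\mathcal{S}_r$ is dense in $\mathcal{S}_r$ and along which conditions (i)--(iii) of that lemma hold. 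I would then work with the parametrization
\[
\mu :\ (\C^{\ell+1})^r \times X^r \ \longrightarrow\ \C^{\ell+1}\otimes\C^{m+1}\otimes\C^{n+1},\qquad \big((\alpha_i)_i,\,(\z_i)_i\big)\ \longmapsto\ \sum_{i=1}^r \alpha_i\otimes\beta_i\otimes\gamma_i,
\]
where $\z_i=(\beta_i,\gamma_i)$, together with the Zariski-open set $\Omega \subseteq (\C^{\ell+1})^r\times X^r$ on which $\mu(\alpha,Z)\in\widetilde{U}$, the $\alpha_i$ are linearly independent, and $Z$ lies in the open locus $W$ where $w_1,\dots,w_r$ are linearly independent.

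First I would verify that $\Omega$ is nonempty, hence dense in the irreducible variety $(\C^{\ell+1})^r\times X^r$: since $\mathcal{S}_r\subseteq\im\mu$ and $\widetilde{U}\cap\mathcal{S}_r\ne\emptyset$, the open set $\mu^{-1}(\widetilde{U})$ is nonempty, and the two further conditions cutting out $\Omega$ are nonempty and open as well — the first because $\ell+1\ge r$, the second because $W\ne\emptyset$ (as $r<(m+1)(n+1)=\HF_S(1,1)$ and the Segre variety is nondegenerate). The crux is then a purely linear-algebraic observation: for $(\alpha,Z)\in\Omega$ and $\A:=\mu(\alpha,Z)$, the factorization in \cref{eqn_flattening} exhibits $\A_{(1)}$ as a product of a matrix of full column rank $r$ (the $\alpha_i$) and one of full row rank $r$ (the $\beta_i\otimes\gamma_i$, using $Z\in W$), so $\rank\A_{(1)}=r$ and therefore $\rank\A=r$; hence $\A\in\mathcal{S}_r\cap\widetilde{U}=U'$ and \cref{lem:zerodim} applies to it. Moreover, reading $\A_{(1)}$ as the map $f\mapsto\sum_i f(\z_i)\alpha_i$ on $S_{(1,1)}$ (as in \cref{subsec:polfromflat}) and using the linear independence of the $\alpha_i$ shows $\ker\A_{(1)}=\{f\in S_{(1,1)}\mid f(\z_i)=0,\ i=1,\dots,r\}=J(Z)_{(1,1)}$, so that $\ideal{\ker\A_{(1)}}=J(Z)$.

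To finish, I would invoke \cref{lem:zerodim}(iii) for $\A\in U'$: it states precisely that $V_X(J(Z))=V_X(\ideal{\ker\A_{(1)}})=\{\z_1,\dots,\z_r\}$, a set of $r$ points of multiplicity one (here $r$-identifiability identifies these $\z_i$ with the second and third factors of $\A$'s unique CPD, i.e., with the ones fixed above). Since $J(Z)$ depends only on $Z$, this holds for every $Z$ in the image $\pi_2(\Omega)\subseteq X^r$ under the projection onto the $X^r$ factor; by Chevalley's theorem $\pi_2(\Omega)$ is constructible, and it is dense because $\Omega$ is dense and $\pi_2$ is surjective, so it contains a nonempty Zariski-open subset $U\subseteq X^r$. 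As $U\subseteq\pi_2(\Omega)\subseteq W$, the functionals $w_1,\dots,w_r$ are automatically linearly independent for every $Z\in U$ as well, which is the remaining assertion.

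The one genuinely delicate point I anticipate is ensuring that the tensor built from a point configuration actually lands in the open set $\mathcal{S}_r$ to which \cref{lem:zerodim} applies (tensor rank being ill-behaved under limits); the full-rank-flattening argument above is exactly what settles this, and everything substantive — the scheme-theoretic Trisecant Lemma — has already been packaged into \cref{lem:zerodim}. A slightly longer alternative would be to bypass the tensor picture and run the two inputs behind \cref{lem:zerodim} (namely \cite[Proof of Proposition 8.1]{BCO2013} and the scheme-theoretic Trisecant Lemma \cite[Proposition 1.4.3]{russo2016geometry}) directly on the family $\{J(Z)\}_{Z\in X^r}$.
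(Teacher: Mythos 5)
Your proposal is correct, but it takes a genuinely different route from the paper. The paper proves \cref{lem:Jdefpoints} directly on point configurations: it invokes \cite[Theorem 2.5]{chiantini2006concept} to get a dense open set of $Z$ for which $\Span_\C(w_1,\ldots,w_r)$ meets the Segre variety only in the $w_i$, and then reruns the Trisecant-Lemma argument from the proof of \cref{lem:zerodim}. You instead treat \cref{lem:zerodim} as a black box and reduce to it by manufacturing an auxiliary unbalanced tensor: setting $\ell=\max\{m,r-1\}$, attaching linearly independent $\alpha_i$, and checking via the flattening factorization \cref{eqn_flattening} that the resulting tensor has rank exactly $r$ and satisfies $\ideal{\ker\A_{(1)}}=J(Z)$, so that a generic $Z$ is hit by a generic $\A\in\mathcal{S}_r$. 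The reduction is sound (there is no circularity, since \cref{lem:zerodim} rests only on \cite{BCO2013} and the Trisecant Lemma), and your identification of the full-rank flattening as the crux --- guaranteeing the constructed tensor actually lands in $\mathcal{S}_r$ --- is exactly right. What your approach buys is economy: no new citation and no repetition of the scheme-theoretic argument. What the paper's approach buys is directness: it never leaves $X^r$, so it avoids the bookkeeping your route requires (the Chevalley/openness-of-projection step, and the fact that $\mu$ as written is not well defined on $(\C^{\ell+1})^r\times X^r$ because $\alpha_i\otimes\beta_i\otimes\gamma_i$ depends on the chosen homogeneous coordinates of $\z_i$ --- one should work on the affine cone over $X^r$ and push the open set $\Omega$ down through the quotient, which is harmless since $J(Z)$ and the conclusion are scale-invariant, but it is a wrinkle your write-up glosses over).
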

\begin{proof}
As before, let $\z_i = (\beta_i, \gamma_i), i = 1, \ldots, r$ and $w_i = (\beta_i \otimes \gamma_i)^\top \in S_{(1,1)}^\vee$. By \cite[Theorem 2.5]{chiantini2006concept}, there is an open dense subset $U' \subset X^r$ such that for $Z = (\z_1, \ldots, \z_r) \in U'$, $\Span_\C(w_1,\ldots,w_r)$ contains no points of the form $(\beta \otimes \gamma)^\top$, other than the $w_i$. We set $U = U' \cap W$, which is open and dense in $X^r$. The rest of the proof is identical to that of \cref{lem:zerodim}.
\end{proof}

Our next goal is to show that, in order to prove that $(d,e) \in \Reg(J(Z))$ for almost all $Z \in X^r$, it suffices to find one particular instance $Z^* \in W$ for which $\HF_{S/J(Z^*)}(d,e) = r$. 
\begin{proposition} \label{prop:HFusc}
Suppose $r \leq \HF_S(1,1)$ such that $W \neq \emptyset$. For fixed $(d,e) \geq (1,1)$, the Hilbert function $\HF_{S/J(Z)}(d,e)$, as a function of $Z$, is upper semicontinous on $W$. That is, for any $r^* \in \N$, 
$$ V_{r^*} = \{ Z \in W ~|~ \HF_{S/J(Z)}(d,e) > r^* \}$$
is Zariski closed in $W$. Consequently, either $V_{r^*} = W$ or $V_{r^*} \subsetneq W$ is a strict closed subvariety.
\end{proposition}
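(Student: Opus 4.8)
The plan is to reduce the statement to the standard fact that the rank of a matrix whose entries are regular functions of a parameter is lower semicontinuous in that parameter (equivalently, its corank is upper semicontinuous), by invoking the description of the Hilbert function in \cref{cor:descIperp}. The first step is to make the matrix that is implicit in \cref{cor:descIperp}, and displayed as $A(Z)$ in \cref{ex:reg1}, explicit in general. Choose homogeneous coordinates $\z_i = (\beta_i,\gamma_i) \in \C^{m+1}\times\C^{n+1}$ for $i=1,\ldots,r$ and enumerate the coordinates $v_{i,a',b'}$ of a tuple $(v_1,\ldots,v_r) \in (S_{(d',e')}^\vee)^r$. By \cref{cor:descIperp}, for $Z \in W$ we have $\HF_{S/J(Z)}(d,e) = \dim_\C\{(v_1,\ldots,v_r) : \M(v_1,\ldots,v_r)\in\im\iota\}$, and the condition $\M(v_1,\ldots,v_r)\in\im\iota$ is precisely the system of ``partial symmetry'' equations that equate the entry of $\M(v_1,\ldots,v_r)$ in position $(\d_{e_k,e_l},\d_{a',b'})$ with the entry in position $(\d_{e_{k'},e_{l'}},\d_{a'',b''})$ whenever $(a'+e_k,b'+e_l)=(a''+e_{k'},b''+e_{l'})$. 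Since that entry equals $\sum_{i=1}^r \beta_{ik}\gamma_{il}\,v_{i,a',b'}$, these are homogeneous linear equations in the $v_{i,a',b'}$ with coefficients $\pm\beta_{ik}\gamma_{il}$ (or $0$); they assemble into a matrix $A(Z)$ of fixed size $\rho\times c$ with $c=r\,\HF_S(d',e')$, and $\HF_{S/J(Z)}(d,e)=c-\rank A(Z)$ for every $Z\in W$.

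Next I would record the homogeneity of $A(Z)$ in $Z$: every column of $A(Z)$ corresponds to a single unknown $v_{i,a',b'}$ and hence involves only the point $\z_i$, and each of its entries is either $0$ or $\pm\beta_{ik}\gamma_{il}$, i.e.\ bilinear of multidegree $(1,1)$ in $\z_i$. Therefore, for any fixed choice of $c-r^*$ rows and $c-r^*$ columns, the corresponding minor of $A(Z)$ is multihomogeneous in $(\z_1,\ldots,\z_r)$, of multidegree $(n_i,n_i)$ in $\z_i$, where $n_i$ is the number of selected columns associated with the index $i$. Hence the vanishing of each such minor is a well-defined condition on $X^r=(\PP^m\times\PP^n)^r$ and defines a Zariski-closed subset of $X^r$. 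Since $\HF_{S/J(Z)}(d,e)>r^*$ is equivalent to $\rank A(Z)\le c-r^*-1$, which holds if and only if all $(c-r^*)\times(c-r^*)$ minors of $A(Z)$ vanish, the set $V_{r^*}$ equals the intersection of $W$ with this Zariski-closed minor locus, and is therefore Zariski closed in $W$. The restriction to $W$ is essential, since the identity $\HF_{S/J(Z)}(d,e)=\dim_\C\ker A(Z)$ relies on the linear independence of $w_1,\ldots,w_r$, i.e.\ on $Z\in W$, as in the proof of \cref{cor:descIperp}.

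Finally, for the dichotomy I would use that $\PP^m\times\PP^n$ is irreducible, hence so is the finite product $X^r$, hence so is the nonempty Zariski-open subset $W\subset X^r$; and a Zariski-closed subset of an irreducible variety is either the whole variety or a proper closed subset (necessarily of strictly smaller dimension). Applying this to $V_{r^*}\subset W$ gives the asserted alternative. The main obstacle is the bookkeeping in the middle step: one must confirm that the entries of $A(Z)$ depend polynomially, and with the stated multidegrees, on the homogeneous coordinates of $Z$, so that the minor conditions descend from the affine cone over $X^r$ to genuine equations on $X^r$. Given that, the rest is the textbook semicontinuity of matrix rank together with the irreducibility of $X^r$.
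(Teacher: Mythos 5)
Your proof is correct and follows essentially the same route as the paper's: both reduce the statement to the Zariski-closedness of a bounded-rank condition on a matrix whose entries are monomials in the coordinates of $Z$, i.e., to the vanishing of fixed-size minors. The paper phrases this via a matrix presenting $J(Z)_{(d,e)}=S_{(d',e')}\cdot J(Z)_{(1,1)}$, whereas you use the dual matrix $A(Z)$ of \cref{cor:descIperp} and \cref{ex:reg1}; this is only a cosmetic difference, and your extra checks (multihomogeneity of the minors so that the conditions descend to $X^r$, and irreducibility of $W$ for the final dichotomy) are accurate.
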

\begin{proof}
We have $\HF_{S/J(Z)}(d,e) = \HF_S(d,e) - \HF_{J(Z)}(d,e)$, and $J(Z)_{(d,e)} = S_{(d',e')} \cdot J(Z)_{(1,1)}$. Therefore, the condition $\HF_{S/J(Z)}(d,e) > r^*$ is equivalent to $\dim_\C (S_{(d',e')} \cdot J(Z)_{(1,1)}) < \HF_S(d,e) - r^*$, which can be written as the vanishing of the $(\HF_S(d,e) - r^*)$-minors of a matrix, whose entries are monomials in the coordinates of $Z$.
\end{proof}

\begin{corollary} \label{cor:pick1point}
Suppose that $r \leq mn$ and $(d,e) \geq (1,1)$. Let $U \subset X^r$ be the dense open subset from \cref{lem:Jdefpoints}. If for some element $Z^* \in W$ we have $\HF_{S/J(Z^*)}(d,e) = r$, then there is a Zariski open, dense subset $U^\circ$ of $U$ such that for all $Z \in U^\circ$, we have $(d,e) \in \Reg(J(Z))$.
\end{corollary}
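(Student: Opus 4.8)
The plan is to combine the upper semicontinuity of $\HF_{S/J(Z)}(d,e)$ from \cref{prop:HFusc} with the universal lower bound from \cref{cor:keratleastr}. First I would recall that by \cref{cor:keratleastr}, $\HF_{S/J(Z)}(d,e) \geq r$ for every $Z \in W$, since $(d,e) \geq (1,1)$. Combined with the hypothesis that $\HF_{S/J(Z^*)}(d,e) = r$ for the particular point $Z^* \in W$, this tells us that $r$ is the minimal value attained by the function $Z \mapsto \HF_{S/J(Z)}(d,e)$ on $W$, and it is attained at least at $Z^*$.

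Next I would apply \cref{prop:HFusc} with $r^* = r$. The set $V_r = \{ Z \in W ~|~ \HF_{S/J(Z)}(d,e) > r \}$ is Zariski closed in $W$. Because $Z^* \in W \setminus V_r$, the set $V_r$ is a proper closed subset, so $W \setminus V_r$ is a nonempty Zariski open subset of $W$, hence dense (both in $W$ and, since $W$ is dense open in the irreducible variety $X^r$, in $X^r$). On $W \setminus V_r$, the lower bound $\HF_{S/J(Z)}(d,e) \geq r$ together with $\HF_{S/J(Z)}(d,e) \leq r$ forces $\HF_{S/J(Z)}(d,e) = r$ exactly.

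Finally, I would intersect with the good locus. By \cref{lem:Jdefpoints}, on the dense open subset $U \subset X^r$ the points $w_1, \ldots, w_r$ are linearly independent and $V_X(J(Z))$ consists of $r$ points each of multiplicity one, so \cref{def:reg} applies to $J(Z)$ for every $Z \in U$. Set $U^\circ = U \cap (W \setminus V_r)$. Since $U$, $W$, and $W \setminus V_r$ are all dense open in the irreducible variety $X^r$, their intersection $U^\circ$ is again dense and open. For every $Z \in U^\circ$ we have $Z \geq (1,1)$ trivially, $J(Z) = \ideal{J(Z)_{(1,1)}}$ by construction, $V_X(J(Z))$ is $r$ reduced points (from $U$), and $\HF_{S/J(Z)}(d,e) = r$ (from $W \setminus V_r$); hence $(d,e) \in \Reg(J(Z))$. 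This completes the argument.

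I do not anticipate a serious obstacle here: every ingredient is already in place in the excerpt, and the proof is essentially a bookkeeping exercise about intersecting dense open sets. The only point requiring minor care is ensuring that $W \setminus V_r$ is genuinely nonempty — this is exactly where the hypothesis "$Z^* \in W$ with $\HF_{S/J(Z^*)}(d,e) = r$" is used, and it is the reason the statement is phrased with an explicit witness $Z^*$ rather than asserting the conclusion unconditionally.
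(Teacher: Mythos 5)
Your proposal is correct and follows essentially the same route as the paper: invoke \cref{cor:keratleastr} for the lower bound $\HF_{S/J(Z)}(d,e)\ge r$ on $W$, use \cref{prop:HFusc} with the witness $Z^*$ to conclude $V_r$ is a strict closed subvariety, and set $U^\circ = U\setminus V_r$ (which equals your $U\cap(W\setminus V_r)$ since $U\subset W$). The only cosmetic slip is writing ``$Z \geq (1,1)$'' where you mean $(d,e)\ge(1,1)$; otherwise the argument matches the paper's proof.
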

\begin{proof}
By \cref{cor:keratleastr}, we know $V_{r-1} = W$. Since $Z^* \in W \setminus V_r$, \cref{prop:HFusc} implies that $V_r \subset W$ is a strict subvariety. We set $U^\circ = U \setminus V_r$. Clearly, if $Z = (\z_1, \ldots, \z_r) \in U^\circ$, then $J(Z)$ is such that $V_X(J(Z)) = \{\z_1, \ldots, \z_r \}$, where these points occur with multiplicity one (this uses $U^\circ \subset U$), and $\HF_{S/J(Z)} = r$ (since $U^\circ \subset V_{r-1} \setminus V_r$). 
\end{proof}
This result will be particularly useful for proving \cref{thm:conjcases} below. First, we investigate which combinations of $m,n,d,e,r$ are possible to have $\HF_{S/J(Z)}(d,e) = r$ for generic points $Z$. 

For $(d,e) \geq (1,1)$ and $m,n \in \N_0$ we define
 \[
 \Rc(m,n,(d,e)) = \frac{\HF_S(1,1) \HF_S(d',e') - \HF_S(d,e)}{\HF_S(d',e')-1}.
 \]
If $(d,e) = (1,1)$, we set $\Rc(m,n,(d,e)) = \infty$. 
For given $m,n$ and $(d,e) \geq (1,1)$, the following result shows that $\Rc(m,n,(d,e))$ bounds the rank $r$ for which we could possibly have $\HF_{S/J(Z)}(d,e) = r$ for $Z \in W \subset X^r$.

\begin{theorem} \label{thm:boundr}
Let $(d,e) \geq (1,1)$ and $Z \in W \subset X^r = (\PP^m \times \PP^n)^r$ with 
\begin{equation} \label{eq:badregion}
\Rc(m,n,(d,e))  < r \leq mn.
\end{equation}
We have $\HF_{S/J(Z)}(d,e) > r$.  In particular, for $r$ in the range \eqref{eq:badregion} and $Z \in U$, where $U \subset W$ is the open subset from \cref{lem:Jdefpoints}, we have $(d,e) \notin \Reg(J(Z))$.
\end{theorem}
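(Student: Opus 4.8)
The plan is to reduce the statement to the description of $\HF_{S/J(Z)}(d,e)$ supplied by \cref{cor:descIperp} and then to a single dimension count. First I would dispose of the case $(d,e) = (1,1)$: there $\Rc(m,n,(d,e)) = \infty$ by convention, so the range \eqref{eq:badregion} is empty and nothing is to be proved. So assume $(d,e) \neq (1,1)$; together with $(d,e) \geq (1,1)$ this forces $d \geq 1$, $e \geq 1$, and $(d',e') \neq (0,0)$, whence $\HF_S(d',e') \geq 2$. This last inequality is the only positivity I will need, and it is exactly what makes $\Rc(m,n,(d,e))$ finite.

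Next I would fix an arbitrary $Z \in W$ and invoke \cref{cor:descIperp}, which gives $\HF_{S/J(Z)}(d,e) = \dim_\C \M^{-1}(\im \iota)$. Since $Z \in W$, the functionals $w_1, \dots, w_r$ are $\C$-linearly independent, so $\M$ is injective; hence $\M$ restricts to an isomorphism $\M^{-1}(\im \iota) \cong \im \M \cap \im \iota$, and $\dim_\C \im \M = r\,\HF_S(d',e')$. The map $\iota$ is injective as well: since $d,e \geq 1$, for each $(a,b)$ with $|a| = d$, $|b| = e$ one recovers the coefficient $c_{a,b}$ from $\iota$ of a functional as the coefficient of $\d_{e_k,e_l}\otimes\d_{a-e_k,\,b-e_l}$ for any $k$ with $a_k \geq 1$ and $l$ with $b_l \geq 1$; thus $\dim_\C \im \iota = \HF_S(d,e)$. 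Both $\im \M$ and $\im \iota$ are subspaces of $S_{(1,1)}^\vee \otimes S_{(d',e')}^\vee$, which has dimension $\HF_S(1,1)\,\HF_S(d',e')$.

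I would then apply the elementary inequality $\dim(U \cap V) \geq \dim U + \dim V - \dim(\text{ambient space})$ to get
\[
\HF_{S/J(Z)}(d,e) \;=\; \dim_\C(\im \M \cap \im \iota) \;\geq\; r\,\HF_S(d',e') + \HF_S(d,e) - \HF_S(1,1)\,\HF_S(d',e').
\]
Because $\HF_S(d',e') - 1 > 0$, clearing denominators in the definition of $\Rc$ shows that the hypothesis $r > \Rc(m,n,(d,e))$ is equivalent to the assertion that the right-hand side above exceeds $r$; hence $\HF_{S/J(Z)}(d,e) > r$ for every $Z \in W$. For the ``in particular'' clause, if $r$ lies in the range \eqref{eq:badregion} and $Z$ belongs to the dense open set $U$ of \cref{lem:Jdefpoints}, then $Z \in W$ (as $U \subseteq W$), so $\HF_{S/J(Z)}(d,e) > r$, in particular $\neq r$; moreover $V_X(J(Z))$ then consists of $r$ points of multiplicity one, so $\Reg(J(Z))$ is well-defined and $(d,e) \notin \Reg(J(Z))$ by \cref{def:reg}.

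I do not anticipate a genuine obstacle here: the substantive input is already proved in \cref{cor:descIperp}, and what remains is bookkeeping — the trivial case $(d,e) = (1,1)$, the injectivity of $\M$ and $\iota$, and verifying that $\Rc$ was defined precisely so that the dimension bound beats $r$. The one point needing a moment's care is the \emph{direction} of the inequality: we want a lower bound on the Hilbert function, and the intersection-dimension inequality indeed yields one, so the conclusion holds for all $Z \in W$ and no genericity within $W$ is required.
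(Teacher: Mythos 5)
Your proof is correct and amounts to the same dimension count as the paper's: the key lower bound you obtain from the intersection formula $\dim(\im\M\cap\im\iota)\geq\dim\im\M+\dim\im\iota-\HF_S(1,1)\HF_S(d',e')$ is exactly the paper's inequality \eqref{eq:ineqHFSJ}, which the paper derives directly on the primal side from $J(Z)_{(d,e)}=S_{(d',e')}\cdot J(Z)_{(1,1)}$ and $\HF_{J(Z)}(1,1)=\HF_S(1,1)-r$ rather than dually via \cref{cor:descIperp}. The remaining bookkeeping (the empty range when $(d,e)=(1,1)$, the positivity of $\HF_S(d',e')-1$, and solving the inequality to recover $\Rc(m,n,(d,e))$) matches the paper's argument.
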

\begin{proof}
Since $\HF_{S/J(Z)}(d,e) = \HF_S(d,e) - \HF_{J(Z)}(d,e) \geq \HF_S(d,e) - \HF_S(d',e') \HF_{J(Z)}(1,1)$
and $\HF_{J(Z)}(1,1) = \HF_S(1,1) - r$, we have 
\begin{equation} \label{eq:ineqHFSJ}
\HF_{S/J(Z)}(d,e) \geq \HF_S(d,e) + r \HF_S(d',e') - \HF_S(1,1) \HF_S(d',e').
\end{equation}
Solving $\HF_S(d,e) + r \HF_S(d',e') - \HF_S(1,1) \HF_S(d',e') > r$ yields the first inequality in \eqref{eq:badregion}.
\end{proof}

Note that if $(d,e) = (1,1)$, the range \eqref{eq:badregion} is empty. This agrees with the fact that $(1,1) \in \Reg(J(Z))$ for all $Z \in W$. From \cref{cor:keratleastr} we know that $\HF_{S/J(Z)}(d,e) \geq r$ for $(d,e) \geq (1,1)$ and $Z \in W$. Combining this with a dimension argument as in the proof of \cref{thm:boundr}, we see that 
\begin{equation} \label{eq:max}
 \HF_{S/J(Z)}(d,e) \geq \max \{ r~,~ \HF_S(d,e) + r \HF_S(d',e') - \HF_S(1,1)\HF_S(d',e') \}.
 \end{equation}
For $e' = 0$ or $d' = 0$, we observe experimentally that equality holds for generic configurations $Z = (\z_1, \ldots, \z_r) \in X^r$. In this case, it suffices to check for which $r$ the maximum equals $r$. This leads us to the following conjecture.

\begin{conjecture} \label{conj:reg}
Let $d \geq 1$ and let $m,n,r$ be such that 
\begin{equation} \label{eq:goodregion}
r \leq \min \left \{  \Rc(m,n,(d,1)) ~ , ~mn \right  \}.
\end{equation}
There is a Zariski open, dense subset $U^\circ \subset U \subset X^r = (\PP^m \times \PP^n)^r$, were $U$ is the open subset from \cref{lem:Jdefpoints}, such that for all $Z \in U^\circ$, $(d,1) \in \Reg(J(Z))$. 
\end{conjecture}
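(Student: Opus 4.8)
The plan is to reduce \cref{conj:reg} to a single explicit rank computation and then to produce the required configuration by a Horace-type degeneration. Since the conjecture only concerns degrees $(d,1)$, write $(d',e')=(d-1,0)$. By \cref{cor:keratleastr} we already know $\HF_{S/J(Z)}(d,1)\ge r$ for all $Z\in W$, and by \cref{prop:HFusc} the map $Z\mapsto\HF_{S/J(Z)}(d,1)$ is upper semicontinuous on $W$. Hence, exactly as in \cref{cor:pick1point}, it suffices to exhibit \emph{one} configuration $Z^{*}=(\z_1^{*},\dots,\z_r^{*})\in W\subset(\PP^m\times\PP^n)^r$ with $\HF_{S/J(Z^{*})}(d,1)=r$; the dense open subset $U^{\circ}\subset U$ then exists automatically, using $r\le mn$ and \cref{lem:Jdefpoints}. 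So the entire statement reduces to this existence problem, for each admissible quadruple $(m,n,d,r)$ with $r\le\min\{\Rc(m,n,(d,1)),mn\}$, the base case $d=1$ being trivial since $(1,1)\in\Reg(J(Z))$ for every $Z\in W$.

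\textbf{Reformulating the target.} Applying \cref{cor:descIperp} as in \cref{ex:reg1}, the condition $\HF_{S/J(Z^{*})}(d,1)=r$ is equivalent to either of: (a) the structured matrix $A(Z^{*})$ assembled from the blocks $\Gamma H_{k}$ as in \cref{ex:reg1} has $\operatorname{corank}$ exactly $r$; or (b) the multiplication map
\[
\mu_{Z^{*}}\colon\; S_{(d-1,0)}\otimes_{\C}J(Z^{*})_{(1,1)}\;\longrightarrow\;S_{(d,1)},\qquad g\otimes f\longmapsto gf,
\]
whose image is $J(Z^{*})_{(d,1)}$, has image of dimension exactly $\HF_S(d,1)-r$; by \cref{cor:keratleastr} this is the \emph{largest} value the image can take. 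The hypothesis $r\le\Rc(m,n,(d,1))$ is precisely the arithmetic condition under which this target value $\HF_S(d,1)-r$ does not exceed the trivial upper bound $\HF_S(d-1,0)\bigl(\HF_S(1,1)-r\bigr)$ for $\dim\im\mu_{Z^{*}}$ coming from $\dim J(Z^{*})_{(1,1)}=\HF_S(1,1)-r$; compare the proof of \cref{thm:boundr}. So the real content is a \emph{maximal rank} statement for $\mu_{Z^{*}}$, and the task is to certify it at one well-chosen $Z^{*}$.

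\textbf{Construction of $Z^{*}$.} I would build $Z^{*}$ by inducting on $n$ and peeling off the coordinate direction $e_n$ of $\PP^n$, in the style of the Horace method. Fix a split $r=r'+r''$: confine the $\gamma$-coordinates of $r'$ of the points to the hyperplane $\{y_n=0\}\cong\PP^{n-1}$, keeping everything else generic, and place the remaining $r''$ points along the new direction $e_n$. The subtlety is that using simple points at $e_n$ makes the $y_n$-slice of $J(Z^{*})_{(1,1)}$ the ideal of a positive-dimensional linear space, which inflates $\HF_{S/J(Z^{*})}(d,1)$ beyond $r$; so one replaces those $r''$ points by a suitable non-reduced scheme supported at $e_n$ (first-order neighbourhoods --- a differential Horace specialization). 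One then checks that $J(Z^{*})$ carries a filtration whose degree-$(d,1)$ graded pieces are governed, on the one hand, by a $J$-ideal for the smaller format $(m,n-1)$ in degree $(d,1)$ and, on the other, by a residual linear-algebra problem on $S_{(d,0)}$; feeding in the inductive hypothesis for $(m,n-1,d,r')$ and a direct count of the residual contribution, one chooses $(r',r'')$ so that the two pieces add up to exactly $r$ while staying inside the admissible ranges on both sides. For small fixed formats this induction can be bypassed: one simply verifies $\operatorname{corank}A(Z^{*})=r$ for an explicit rational (or finite-field) $Z^{*}$, which by \cref{cor:pick1point} proves the conjecture for that $(m,n,d,r)$; this is how several of the cases in \cref{thm:conjcases} are obtained.

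\textbf{The main obstacle.} The hard part is controlling the degeneration. A naive specialization to reduced coordinate points is \emph{too} degenerate: the limiting ideal $J(Z^{*})$ then fails to define a length-$r$ scheme and $\HF_{S/J(Z^{*})}(d,1)$ jumps strictly above $r$ --- this is the phenomenon already visible in \cref{ex:bigleap}. Passing to the differential, non-reduced version keeps the count correct but produces ``leftover'' terms in the filtration whose vanishing hinges on combinatorial inequalities relating $(r',r'',m,n,d)$ that cannot, in general, be met simultaneously for all formats; this is exactly why \cref{conj:reg} remains open in full generality and \cref{thm:conjcases} settles only the ranges where those inequalities can be verified. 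An alternative route I would also attempt is to prove the maximal rank of $\mu_{Z^{*}}$ directly, by Koszul-type cohomology vanishing for the non-saturated ideal $J(Z^{*})$ viewed inside a minimal bigraded free resolution; but there the obstruction merely migrates to a vanishing statement that is again only known in special cases.
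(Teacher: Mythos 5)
The statement you are addressing is \cref{conj:reg}, which the paper itself leaves open: there is no proof of it in the text, only the partial results of \cref{thm:conjcases} and the necessity direction \cref{thm:boundr}. Your proposal, read honestly, is a proof \emph{strategy} that stalls at exactly the same point the paper does, and you say so explicitly in your final paragraph; so there is no complete proof here to certify, but also no false claim.

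The parts of your argument that are actually carried out are correct and coincide with the paper's machinery. The reduction to exhibiting a single witness $Z^{*}\in W$ with $\HF_{S/J(Z^{*})}(d,1)=r$, via \cref{cor:keratleastr}, \cref{prop:HFusc} and \cref{cor:pick1point}, is precisely how the paper proves the cases in \cref{thm:conjcases}; your reformulation as $\operatorname{corank}A(Z^{*})=r$, equivalently maximal rank of the multiplication map $S_{(d-1,0)}\otimes J(Z^{*})_{(1,1)}\to S_{(d,1)}$, matches \cref{cor:descIperp} and \cref{ex:reg1}; and your arithmetic check that $r\le\Rc(m,n,(d,1))$ is exactly the condition making the target dimension $\HF_S(d,1)-r$ attainable agrees with the computation in the proof of \cref{thm:boundr}. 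Your observation that explicit finite-field witnesses settle individual formats is literally the method of items (3)--(5) of \cref{thm:conjcases}. Where you go beyond the paper is the proposed Horace-type degeneration with differential (non-reduced) specialization at $e_n$; the paper does not attempt anything of this kind (its only non-computational cases, $d=2$ with $m\in\{1,2\}$, are handled by a direct rank argument on the structured matrix $A'$ in \cref{app:proofs}). That degeneration is a plausible route, but as you concede, the filtration and the choice of $(r',r'')$ are not verified, so it does not upgrade the conjecture to a theorem. In short: your reduction is sound and identical to the paper's, your proposed continuation is genuinely different but incomplete, and the statement remains, both for you and for the authors, a conjecture.
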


\begin{theorem}\label{thm:conjcases}
\Cref{conj:reg} holds in the following cases: 
\begin{enumerate}
\item $m \in \N_0, ~n \in \N_0$ and $d = 1$,
\item $m \in \{1,2\}, ~ n \in \N_0$ and $d = 2$,
\item $2 \leq m+1, n+1 \leq 50$ and $d = 2$,
\item $2 \leq m+1, n+1 \leq 9$ and $3 \le d \le 5$,
\item $2 \leq m+1,n+1 \leq 6$ and $6 \le d \le 10$.
\end{enumerate}
\end{theorem}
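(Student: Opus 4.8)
\emph{Reduction to one configuration.} The plan is to reduce, in every case, to exhibiting a single favorable point set. By \cref{cor:pick1point}, for each admissible triple $(m,n,d)$ and each $r$ in the range \eqref{eq:goodregion} it suffices to produce one $Z^{*}\in W\subset X^r$ with $\HF_{S/J(Z^{*})}(d,1)=r$; then \cref{lem:Jdefpoints} supplies the open set $U$ and \cref{cor:pick1point} the dense open $U^{\circ}\subset U$ demanded by \cref{conj:reg}. Since $\HF_{S/J(Z)}(d,1)\ge r$ always holds by \cref{cor:keratleastr}, this is an \emph{upper} bound, and by \cref{cor:descIperp} it is equivalent to the maximality of the rank of the matrix of symmetry (catalecticant) conditions generalizing $A(Z)$ from \cref{ex:reg1} to degree $(d,1)$. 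Writing $\Gamma=[\gamma_1\ \cdots\ \gamma_r]$ and $H_k=\diag(\beta_{1k},\dots,\beta_{rk})$, this matrix factors as $A(Z)=(\Gamma\oplus\cdots\oplus\Gamma)\circ K(Z)$, where $K(Z)$ collects, over all pairs of distinct tokens $(a',k),(a'',k')$ with $|a'|=|a''|=d-1$ and $a'+e_k=a''+e_{k'}$, the blocks $v\mapsto H_k v_{:,a'}-H_{k'}v_{:,a''}$; thus $\operatorname{corank}A(Z)=\HF_{S/J(Z)}(d,1)$ for $Z\in W$, and the goal is to make this corank equal to $r$.

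\emph{Case (1), and the subrange $r\le n+1$.} Case (1) is immediate: when $d=1$ we have $(d,1)=(1,1)$ and $J(Z)_{(1,1)}^{\perp}=\Span_\C(w_1,\dots,w_r)$ by definition of $J(Z)$, so $\HF_{S/J(Z)}(1,1)=r$ for every $Z\in W$, while $\Rc(m,n,(1,1))=\infty$ makes \eqref{eq:goodregion} reduce to $r\le mn$; hence $U^{\circ}=U$ works. More generally, whenever $r\le n+1$ take $Z^{*}$ with $\gamma_i=e_i$ and $\beta_1,\dots,\beta_r$ generic points of $\PP^m$. Then $\Gamma$ is injective, so $\ker A(Z^{*})=\ker K(Z^{*})$, and a block-by-block inspection (using that all coordinates of each $\beta_i$ are nonzero) shows $\ker K(Z^{*})$ is spanned, one dimension per block, by the Veronese vectors $\bigl(\beta_i^{a'}\bigr)_{|a'|=d-1}$, so $\dim\ker A(Z^{*})=r$; the $w_i=\beta_i\otimes e_i$ are linearly independent, so $Z^{*}\in W$. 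For $m=1$ the bound in \eqref{eq:goodregion} is $r\le\min\{n+1,n\}=n$, so this already settles case (2) completely when $m=1$.

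\emph{Cases (3)--(5), and case (2) for bounded $n$.} For $(m,n)$ in the bounded ranges of (3)--(5) the assertion is a finite verification. For each admissible $r$ one writes down an explicit $Z^{*}$ with integer (equivalently, $\mathbb{F}_p$-) coordinates, forms $A(Z^{*})$ --- a sparse integer matrix, each column inheriting the few nonzeros of the $f_i$ --- and computes $\operatorname{rank}A(Z^{*})$ by exact Gaussian elimination. Since rank cannot increase under reduction modulo a prime $p$, a full-rank computation mod $p$ certifies $\operatorname{rank}A(Z^{*})=(\#\text{columns})-r$ over $\Q$, which together with $\operatorname{corank}A(Z)\ge r$ forces $\HF_{S/J(Z^{*})}(d,1)=r$; one also checks $\operatorname{rank}[w_1\ \cdots\ w_r]=r$ mod $p$ so that $Z^{*}\in W$. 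Since $d=2$ with $2\le m+1,n+1\le 50$ is precisely (3), and for $m\in\{1,2\}$, $d=2$, $n\le 49$ case (2) is contained in (3), the only part of case (2) not yet covered is $m=2$, $d=2$, $n\ge 50$.

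\emph{The remaining case: $m=2$, $d=2$, large $n$.} Here $r$ may exceed $n+1$ --- one has $\Rc(2,n,(2,1))=\tfrac32(n+1)$ --- so $\Gamma$ is no longer injective and the argument above breaks down; what must be shown is that $\operatorname{im}K(Z^{*})\cap(\ker\Gamma)^{\oplus 3}=0$, i.e.\ that $\im\iota$ and $\im\M$ meet in exactly the forced dimension $r$ inside $S_{(1,1)}^{\vee}\otimes S_{(1,0)}^{\vee}$. The plan is to use a semi-structured configuration: take $\gamma_1,\dots,\gamma_{n+1}$ a basis of $\C^{n+1}$, the remaining $\gamma_i$ on a rational normal curve, and the $\beta_i$ generic (or also along a rational curve in $\PP^2$), so that $\Gamma=[\,I\mid\Gamma'\,]$ with $\Gamma'$ a generalized Vandermonde block, and reduce the vanishing of the intersection to the nonvanishing of an explicit Vandermonde/Cauchy-type determinant; an alternative is an induction on $n$ in which a generic hyperplane $V_X(h)\cong\PP^2\times\PP^{n-1}$ with $h\in S_{(0,1)}$, together with a Horace-type residuation exact sequence, relates $\HF_{S/J(Z)}(2,1)$ on $\PP^2\times\PP^n$ to the same quantity for a smaller point set on $\PP^2\times\PP^{n-1}$, the base cases being the finite computations of (3). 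I expect this last case --- either pinning down the right structured $Z^{*}$ and evaluating the determinant, or making the residuation bookkeeping precise --- to be the main obstacle; everything else follows routinely once the reduction via \cref{cor:pick1point} and the factorization $A(Z)=(\Gamma\oplus\cdots\oplus\Gamma)\circ K(Z)$ are in place.
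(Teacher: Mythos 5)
Your overall strategy coincides with the paper's: reduce via \cref{cor:pick1point} to exhibiting one configuration $Z^*\in W$ with $\HF_{S/J(Z^*)}(d,1)=r$, identify this Hilbert function with the corank of the stacked catalecticant-condition matrix $A(Z)$ (which does factor through block copies of $\Gamma$ as you say), dispose of case (1) trivially, settle the bounded cases (3)--(5) by exact finite-field rank computations (the mod-$p$ certification logic you give is the same one the paper uses), and handle the subrange $r\le n+1$ --- hence all of case (2) with $m=1$ --- by making $\Gamma$ injective and decoupling the conditions point by point. All of that is correct.

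The genuine gap is exactly where you flag it: case (2) with $m=2$ and $n$ arbitrary, where $r$ can be as large as $\tfrac32(n+1)$ and $\Gamma$ has a $\kappa$-dimensional kernel with $\kappa=r-(n+1)$. You offer two plans (a structured $Z^*$ leading to a Vandermonde/Cauchy determinant, or a Horace-type residuation induction) but execute neither, so the theorem is not proved as stated. The paper's argument for this case is more elementary than either plan and is worth knowing: normalize $\beta_{i0}=1$ so that $H_0=\id$, take $\Gamma=[\,\id_{n+1}\mid\hat\Gamma\,]$ and $\Gamma^\perp=\bigl[\begin{smallmatrix}-\hat\Gamma\\ \id_\kappa\end{smallmatrix}\bigr]$, and observe that killing the blocks involving $H_0$ forces $v_q=\Gamma^\perp w_q$, so that triviality of $\ker A(Z)$ reduces to injectivity of $[\,\Gamma H_2\Gamma^\perp\mid -\Gamma H_1\Gamma^\perp\,]$, an $(n+1)\times 2\kappa$ matrix with $2\kappa\le n+1$ by \eqref{eq:goodregion}. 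Its entries are explicitly $(\beta_{jq}-\beta_{i+1,q})\gamma_{ji}$. Choosing the second coordinates $\beta_{\cdot,2}$ of a suitable subset of points to be equal zeroes out a $\kappa\times\kappa$ block, so that the top $2\kappa\times 2\kappa$ minor becomes block-triangular and factors as $\det(C_{11})\det(C_{22})$, a product of two polynomials in the $\gamma_{ji}$ that are visibly not identically zero; a generic choice of the remaining coordinates finishes the proof. If you want to complete your write-up without reproducing this, you would have to actually evaluate your proposed determinant or set up the residuation exact sequence and its dimension count --- neither is routine, and the paper itself remarks that this argument does not extend beyond $m=2$, which is why \cref{conj:reg} remains open in general.
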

\begin{proof}
(1) In this case, clearly $U^\circ = U$.

(2) Following \cref{ex:reg1}, we can compute $\HF_{S/J(Z)}(2,1)$ as the corank of the stacked matrix \(A(Z) = [\begin{smallmatrix}
G_1(Z)^\top & \cdots & G_m(Z)^\top
\end{smallmatrix}]^\top\), where 
\[
G_k(Z) = 
\begin{bmatrix}
0_{(n+1)\times (k-1)r} & \Gamma H_{k} & -\Gamma H_{k-1} \\
0_{(n+1)\times (k-1)r} & \Gamma H_{k+1} & & -\Gamma H_{k-1} \\
\vdots & \vdots & & & \ddots \\
0_{(n+1)\times (k-1)r} & \Gamma H_{m} & & & & -\Gamma H_{k-1} \\ 
\end{bmatrix},
\]
the matrix $\Gamma$ contains homogeneous coordinates of the points $\gamma_i$ in its columns, and $H_q$ is the diagonal matrix $\diag(\beta_{1q}, \ldots, \beta_{rq})$.

By \cref{cor:pick1point}, for each $r$ satisfying \eqref{eq:goodregion} we must find one $Z^* \in W$ such that $\HF_{S/J(Z^*)}(2,1) = \text{corank}(A(Z^*)) = r$. 
In fact, since $A(Z) ~ (H_0 ~ \cdots ~ H_m)^\top = 0$, we have $\text{corank}(A(Z)) \geq r$ for any $Z \in X^r$, and by upper semicontinuity of corank, if $\text{corank}(A(Z^*)) = r$ for some $Z^* \in X^r$, then $\text{corank}(A(Z)) = r$ for all $Z$ in a dense, Zariski open subset of $X^r$.
For $m = 1$, \eqref{eq:goodregion} entails $r \leq n +1$, which means that the matrix $\Gamma$ has more rows than columns. On a dense, Zariski open subset of $X^r$, $\Gamma$ has rank $r$ and, since $(\beta_{i0},\beta_{i1}) \neq (0,0)$, the matrix $A = [\Gamma H_1 ~ - \Gamma H_0]$ has rank at least $r$. The proof for $m=2$ is more technical. Since we could not generalize it for higher $m$, we have deferred it to \cref{app:proofs}.

(3)--(5) These cases consists of a computer-assisted proof.\footnote{The code and certificates can be obtained at \url{https://gitlab.kuleuven.be/u0072863/homogeneous-normal-form-cpd}.}

For proving (3), we generated random $Z^*$ and confirmed that $A(Z^*)$ has corank $r$ as follows. We generate random $\beta_i \in \mathbb{Z}^{m+1}$ and $\gamma_i \in \mathbb{Z}^{n+1}$. Then $A(Z)$ is a matrix over $\mathbb{Z}$. We can upper bound its corank by computing the corank, via a reduction to row echelon form using Gaussian elimination, over the finite field $\mathbb{Z}_p$ for some prime number $p$. We used $p=8191$. The corank of $A(Z^*)$ over the finite field $\mathbb{Z}_p$ is then an upper bound of the corank of $A(Z^*)$ over $\mathbb{Z}$ (and so also its algebraic closure). We implemented this approach in C++ and certified for all listed cases that there exists a $Z^*$ (with integer coefficients) such that $A(Z)$ has corank $r$.   

For proving (4) and (5), we apply \cref{cor:pick1point} to verify \cref{conj:reg} by computing the Hilbert function in Macaulay2 \cite{M2} for all listed cases as follows:
{\small\begin{verbatim}
F = Z/8191;
HF = (x,y) -> binomial(m+x,x)*binomial(n+y,y);
degs = apply(m+1,i->{1,0}) | apply(n+1,i->{0,1});
S = F[x_0..x_m,y_0..y_n, Degrees => degs];
X = matrix apply(m+1,i->apply(r,j->random(F)));
Y = matrix apply(n+1,i->apply(r,j->random(F)));
N = transpose matrix apply(r,i->(X_i**Y_i));
I = ideal (basis({1,1},S)*(gens ker N));
rk = rank N; -- check that this is r, so Z* is in W
hf = hilbertFunction({d,1},I); -- check that this equals r
\end{verbatim}}
where \texttt{m}, \texttt{n}, \texttt{d}, and \texttt{r} are respectively $m$, $n$, $d$, and $r$ from the theorem.
\end{proof}

Note that the role of $m$ and $n$ in \cref{conj:reg} can be interchanged, implying the analogous statement that for $e \geq 1$ and generic $Z \in W \subset X^r = (\PP^m \times \PP^n)^r$ with 
\(
r \leq \min \left \{ \Rc(m,n,(1,e)), mn \right \},
\)
we have $(1,e) \in \Reg(J(Z))$. 

Let $\A \in \C^{\ell + 1} \otimes \C^{m + 1} \otimes \C^{n+1}$ be a general tensor whose rank satisfies \eqref{eqn_rank_condition} and let $I = \ideal{ \ker \A_{(1)} } = J(Z)$, where $Z = ((\beta_i, \gamma_i))_{i=1, \ldots, r}$. A consequence of \cref{conj:reg} would be that $(d,1) \in \Reg(I)$, for any $d$ such that $\Rc(m,n,(d,1)) \geq r$ and $(1,e) \in \Reg(I)$ for any $e$ such that $\Rc(m,n,(1,e)) \geq r$. 

\begin{example}[\Cref{ex:bigleap}, continued]
In \cref{ex:bigleap}, we had $m = 6$ and $n = 2$. We have $\Rc(6,2,(2,1)) = 21/2 < r = 12$, which explains why $(2,1) \notin \Reg(I)$. Also, $\Rc(6,2,(3,1)) = 112/9 \geq 12$ implies, since $\A$ is generic, that $(3,1) \in \Reg(I)$. Similarly, the smallest $e > 1$ for which $\Rc(6,2,(1,e)) \geq 12$ is $e = 5$, so that $(1,5) \in \Reg(I)$, but $(1,e) \notin \Reg(I)$ for $1 < e < 5$. These computations are confirmed by \cref{tab:hfex4}.
\end{example}

One can check that
\begin{align}
\Rc(m,n,(d,1)) 
\label{eqn_Rc_expression} &= \frac{\binom{m+d-1}{d-1}}{\binom{m+d-1}{d-1} - 1} \left( (m+1)(n+1) - \frac{n+1}{d}(m+d) \right).
\end{align}
The first factor is always greater than $1$ and the second factor is at least $mn$ if $d \ge n+1$. If \cref{conj:reg} holds, then for all tensors $\A$ of rank $r$ in the range \eqref{eqn_rank_condition}, we have $(n+1,1) \in \Reg(I)$. In that case, \algoname{} can take $(d,e) = (n+1,1)$ to treat all identifiable rank-$r$ tensors in the \emph{unbalanced} regime. That is, when $\ell > mn$. For such shapes, \cite{BCO2013} proved that generic $r$-identifiability holds up to $r \le mn$.

\Cref{conj:reg} gives a way of finding degrees of the form $(d,1) \in \Reg(I) \setminus \{(1,1)\}$.
There might exist other tuples of the form $(d,e) \in \Reg(I) \setminus \{(1,1)\}$, which could lead to a lower computational cost. For such degrees, \eqref{eq:max} may be a strict inequality, which means that other tools are needed. We leave the further exploration of effective bounds for the regularity for future research.

\section{Computational complexity} \label{sec:complexity}
One motivation for studying the regularity of the ideal $I = \langle \ker \A_{(1)} \rangle$ is to understand the computational complexity of \cref{alg:pseudoalg}. 

Assume that we are given an $r$-identifiable tensor $\A$ in $\C^{\ell+1} \otimes \C^{m+1}\otimes \C^{n+1}$, where $\ell\ge m\ge n$. We additionally assume that $\ell+1 \le (m+1)(n+1)$.\footnote{A tensor $\A$ not satisfying this constraint on $\ell$ can always be represented in new bases by a coordinate array $\mathcal{B}$ in a \textit{concise} \cite{Landsberg2012,BCS1997} tensor space $\C^{\ell'+1} \otimes \C^{m'+1}\otimes \C^{n'+1}$ with $\ell' \le \ell$, $m' \le m$, and $n'\le n$. After permutation of the factors, a concise tensor space $\C^{k_1}\otimes\C^{k_2}\otimes\C^{k_3}$ always satisfies $k_2k_3 \ge k_1 \ge k_2 \ge k_3$; see, e.g., \cite{CK2011}. In practice, $\mathcal{B}$ can be obtained as the core tensor of the (sequentially) truncated higher-order singular value decomposition \cite{LMV2000,VVM2012}.} 
The size of the input will be denoted by $M = (\ell+1)(m+1)(n+1)$. Because of the constraint $(m+1)(n+1) \ge \ell+1 \ge m+1 \ge n+1$, we have
\[
 M^{\frac{1}{4}} \le m+1 \le M^{\frac{1}{2}} \text{ and } 2 \le n+1 \le M^{\frac{1}{3}}.
\]
Since \algoname{} applies only if the rank $r$ satisfies \cref{eqn_rank_condition}, we have $r = \mathcal{O}(M^\frac{5}{6})$. 

Without going into details, it can be verified that a crude upper bound for the computational complexity of the steps in \cref{alg:pseudoalg}, excluding step 4, is $\mathcal{O}(M^3)$. 

The key contribution to the time complexity originates from \cref{alg:mhnfalg}. 
Its complexity is ultimately determined by the choice of $(d,e)$ in line \ref{line_alg_degree} of the algorithm. 
We analyze what happens when the degree is selected as $(d,1)$. 
In this case, bounding the size of the matrix $R_I(d,e)$ in \cref{alg:mhnfalg} is critical. Assuming \cref{conj:reg} holds, we have 
\begin{equation} \label{eq:ineqcomp}
 \left ( \frac{3}{2} \right ) ^{d+1} \le \HF_S(d,1) = \binom{m+d}{d} (n+1) \le M^{\frac{5}{6}d} M^{\frac{1}{3}}.
\end{equation}
The upper bound follows from $\binom{m+d}{d} \le (m+d)^d \le (2(m+1))^d \le ((m+1)(n+1))^d$.
For establishing the lower bound, note that $\left(1 + \frac{m}{d}\right)^d \le \binom{m+d}{d}$ and use the facts that $d \le n+1 \le m+1$ (see the discussion below \eqref{eqn_Rc_expression}), so that $d\ge2$ implies that $\frac{m}{d} \ge \frac{1}{2}$. Computing a basis for the left null space of $R_{I}(d,1)$ requires at most $\mathcal{O}( (\HF_S(d,1))^3 )$ and at least $r \cdot \HF_S(d,1)$ operations, as we know $(d,1)\in\Reg(I)$ so that the dimension of the left null space is $r$. Consequently, line \ref{line_null_space} has a time complexity that is at least exponential in $d$.
 
\begin{proposition} \label{prop_exponential}
 Consider the concise tensor space $\C^{M^{\frac{1}{2}}}\otimes\C^{M^{\frac{1}{4}}}\otimes\C^{M^{\frac{1}{4}}}$. If \cref{conj:reg} holds, then for a generic tensor of rank $r=M^{\frac{1}{2}} - 2M^{\frac{1}{4}} - 1$ in this space, the asymptotic time complexity of \cref{alg:pseudoalg} is at least exponential in the input size $M = M^{\frac{1}{2}} M^{\frac{1}{4}} M^{\frac{1}{4}}$, if the degrees are restricted to $(d,1)$ or $(1,e)$.
\end{proposition}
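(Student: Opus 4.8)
Set $q = M^{1/4}$, so that $\ell+1 = M^{1/2} = q^{2}$ and $m+1 = n+1 = M^{1/4} = q$; thus $M = q^{4}$, $mn = (q-1)^{2} = q^{2}-2q+1$, and $r = q^{2}-2q-1 = mn-2$. In particular $r \le mn = \min\{\ell+1,mn\}$, so \cref{assum:vanishingideal} holds, and for a generic rank-$r$ tensor $\A$, \cref{lem:zerodim} tells us that $I = \ideal{\ker\A_{(1)}}$ is $(1,1)$-generated with $V_X(I)$ a set of $r$ reduced points $\z_{i}=(\beta_{i},\gamma_{i})$; hence $\Reg(I)$ is defined in the sense of \cref{def:reg}, $I = J(Z)$ for $Z=(\z_{1},\dots,\z_{r})$, and $Z\in W$ because $\A_{(1)}$ has rank $r$ forces $w_{1},\dots,w_{r}$ to be $\C$-linearly independent. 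Since $r > m+1$ once $q \ge 4$, \cref{alg:mhnfalg} enters its second branch and line~\ref{line_null_space} computes a basis of the left null space of $R_I(d,e)$ for the chosen admissible degree $(d,e)$, which by hypothesis has the form $(d,1)$ or $(1,e)$. The plan is to show that any such admissible degree must have non-unit entry $\ge \lfloor q/2\rfloor+1 = \Theta(M^{1/4})$, and then to invoke \eqref{eq:ineqcomp}.

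The heart of the matter is to prove $(d,1)\notin\Reg(I)$ for all $2\le d\le\lfloor q/2\rfloor$; the case $(1,e)$ then follows because $m=n$ makes $\Rc(m,n,(1,e)) = \Rc(m,n,(e,1))$. Writing $B_{d}=\binom{q+d-2}{d-1}$, formula \eqref{eqn_Rc_expression} specializes to $\Rc(m,n,(d,1)) = \bigl(1+\tfrac{1}{B_{d}-1}\bigr)\,q(q-1)\,\tfrac{d-1}{d}$. For $d=2$ this equals exactly $q^{2}/2$, which is smaller than $r = q^{2}-2q-1$ once $q\ge 5$. For $3\le d\le\lfloor q/2\rfloor$ I would use $B_{d}\ge B_{3}=q(q+1)/2$ and $\tfrac{d-1}{d}\le\tfrac{q-2}{q}$ to bound $\Rc(m,n,(d,1))\le\tfrac{q(q+1)(q-2)}{q+2}$, which is $< r$ precisely when $q^{2}-3q-2>0$, i.e.\ for $q\ge 4$. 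Consequently, for $M$ large and every $2\le d\le\lfloor q/2\rfloor$ we have $\Rc(m,n,(d,1)) < r \le mn$, so \cref{thm:boundr} gives $\HF_{S/I}(d,1) > r$, whence $(d,1)\notin\Reg(I)$ by \cref{def:reg}; symmetrically $(1,e)\notin\Reg(I)$ for $2\le e\le\lfloor q/2\rfloor$. I expect this elementary binomial estimate to be the only technical step; one could in fact push the forbidden range up to $d = m$, but a constant fraction of $q$ already suffices for the conclusion.

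To see that admissible axis-parallel degrees exist at all, note that under \cref{conj:reg} we have $(n+1,1)=(q,1)\in\Reg(I)$, since for $d=n+1$ the second factor of \eqref{eqn_Rc_expression} is at least $mn\ge r$. Therefore \cref{alg:mhnfalg} runs line~\ref{line_null_space} with a degree $(d,1)$ (or, by symmetry, $(1,e)$) with $d\ge\lfloor q/2\rfloor+1$, and since the output is the $r\times\HF_S(d,1)$ matrix $N$ this costs at least $r\cdot\HF_S(d,1)$ operations. As $\HF_S(d,1)=\binom{m+d}{d}(n+1)$ is nondecreasing in $d$, combining this with \eqref{eq:ineqcomp} (applicable since $2\le\lfloor q/2\rfloor+1\le n+1$) yields $\HF_S(d,1)\ge\HF_S(\lfloor q/2\rfloor+1,1)\ge(3/2)^{\lfloor q/2\rfloor+2}$. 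Because the remaining steps of \cref{alg:pseudoalg} contribute only $\mathcal{O}(M^{3})$, the total running time is at least $(q^{2}-2q-1)(3/2)^{\lfloor q/2\rfloor+2}\ge(3/2)^{M^{1/4}/2}$ for $M$ large, which grows exponentially in $M^{1/4}$ and hence faster than any polynomial in $M$, as claimed.
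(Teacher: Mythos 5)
Your proposal is correct and follows essentially the same route as the paper's (very terse) proof: use \cref{eqn_Rc_expression} and \cref{thm:boundr} to show every admissible axis-parallel degree must have its non-unit entry of order $\tfrac12 M^{1/4}$, then invoke the exponential lower bound \eqref{eq:ineqcomp} on $\HF_S(d,1)$ together with the $r\cdot\HF_S(d,1)$ cost of the nullspace computation. Your write-up simply makes explicit the binomial estimates and genericity checks that the paper leaves implicit.
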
 
\begin{proof}
 It follows from \cref{eqn_Rc_expression} and \cref{thm:boundr} that for sufficiently large $M$, the degree $d$ should be at least $\frac{1}{2} M^{\frac{1}{4}}$. Combining this with the above discussion about the size of $R_I(d,1)$ concludes the proof.
\end{proof} 
 
One might conclude from this result that \algoname{} is not an effective algorithm for tensor rank decomposition. However, \cite{HL2013} proved that computing the rank of a tensor over $\C$ is an NP-hard problem. Hence, a polynomial-time algorithm \textit{that applies to all inputs} for this problem is not anticipated.
A \textit{typical} instance of an NP-hard problem can often be solved more efficiently than the worst-case instance. \Cref{prop_polynomial} is in this spirit.

\begin{proof}[Proof of \cref{prop_polynomial}]
 It follows from the upper bound in \eqref{eq:ineqcomp} that the time complexity of \cref{alg:pseudoalg} is $\mathcal{O}( (\HF_S(d,1))^3 ) = \mathcal{O}( M^{\frac{5}{2}d + 1} )$. 
 
We determine the smallest degree $d$ under \cref{conj:reg} such that $(d,1) \in \Reg(I)$, where $I=\langle \ker \A_{(1)} \rangle$. From \cref{thm:boundr} we know that a necessary condition for this is that $\Rc(m,n,(d,1)) \ge r = \phi mn$. 
The last inequality is implied by 
$
  (m+1)(n+1) - \frac{n+1}{d}(m+d) \ge \phi mn
$
because of \cref{eqn_Rc_expression}. This is equivalent to 
\[
 (1 - \phi) mn + m + n + 1 \ge 
%  \frac{1}{d}(m+1+d)(n+1) = 
 \frac{1}{d}(m+d)(n+1) 
 = \frac{1}{d} m (n+1) + n+1.
\]
Hence, $\Rc(m,n,(d,1)) \ge \phi mn$ is implied by
\[
 d \ge \frac{1}{1 - \phi} > (n+1) \frac{1}{(1 - \phi)n + 1} = \frac{m(n+1)}{(1-\phi)mn + m}.
\]
In other words, provided \cref{conj:reg} holds, it suffices to take $d = \lceil \frac{1}{1-\phi} \rceil$ to be able to cover all ranks up $\phi mn$. As $\phi \ne 1$ is a constant, this proves the result.
\end{proof}

We can observe that the formula $d \ge \frac{1}{1-\phi}$ is asymptotically sharp in the sense that as $\phi\to1$ the exponent in $\mathcal{O}(M^\star)$ needs to blow up to $\infty$ and no polynomial in the input size can control the growth. Indeed, \cref{prop_exponential} shows that exactly when $\phi=1$ there exist cases that require at least an exponential growth. Note that by the discussion below \eqref{eqn_Rc_expression}, assuming \cref{conj:reg} we can use $d = n+1$ for any $\phi \in [0,1]$. This gives the universal, exponential complexity bound $\mathcal{O}(M^{\frac{5}{2}n + \frac{7}{2}})$.

\section{Numerical experiments} \label{sec:experiments}

We present several numerical results demonstrating the effectiveness of \algoname. All experiments were performed on KU Leuven/UHasselt's Tier-2 Genius cluster of the Vlaams Supercomputer Centrum (VSC). 
Specifically, the supercomputer's scheduling software allocated standard \textit{skylake} nodes containing two Xeon\circledR{} Gold 6140 CPUs (18 physical cores, 2.3GHz clock speed, 24.75MB L3 cache) with 192GB of main memory, as well as standard \textit{cascadelake} nodes which are equipped with two Xeon\circledR{} Gold 6240 CPUs (18 physical cores, 2.6GHz clock speed, 24.75MB L3 cache) with 192GB of main memory for our experiments. In all experiments, we allowed all algorithms to use up to 18 physical cores.

The proposed algorithm was implemented in Julia v1.4.0, relying on the non-base packages Arpack.jl, DynamicPolynomials.jl, GenericSchur.jl, and MultivariatePolynomials.jl.\footnote{Additional packages are used to support our experimental setup, but these are not required for the main algorithm.} Our implementation follows the pseudocode in \cref{alg:pseudoalg,alg:mhnfalg} and the detailed discussion in \cref{sec_sub_hnfalgorithm} quite closely. The Julia code of \algoname, including driver routines to reproduce \algoname's experimental results can be found at \url{https://gitlab.kuleuven.be/u0072863/homogeneous-normal-form-cpd}.

In the experiments below, random rank-$r$ tensors $\A = \sum_{i=1}^r \alpha_i \otimes \beta_i \otimes \gamma_i$ are generated by randomly sampling the elements of $\alpha_i \in \R^{\ell+1}$, $\beta_i \in \R^{m+1}$, and $\gamma_i \in \R^{n+1}$ identically and independently distributed (i.i.d.) from a standard normal distribution.

\subsection{Implementation details}
The algorithm is implemented for real and complex input tensors. In the former case, all computations are performed over the reals with the exception of the computation of the left null space of $R_I(d,e)$. In the real case, the algorithm continues with the real part of the output of this step.

At the start of the algorithm, we compress the $(\ell+1) \times (m+1) \times (n+1)$ input tensor, which is to be decomposed into $r$ rank-$1$ terms, to a $\min\{\ell+1,r\} \times \min\{m+1,r\} \times \min\{n+1,r\}$ tensor. For this we apply ST-HOSVD compression \cite{VVM2012} with truncation rank $(\min\{\ell+1,r\}, \min\{m+1,r\}, \min\{n+1,r\})$. Most of the experiments below are chosen so that this step performs no computations.

The kernel of $\A_{(1)}$ in \cref{alg:pseudoalg} is computed by an SVD. 
The linear system at the end of \cref{alg:pseudoalg} is solved by computing the Khatri--Rao product $K = [\beta_i \otimes \gamma_i]_{i=1}^r$ and then solving the overdetermined linear system $K A = \A_{(1)}^\top$ for $A$. The rows of $A$ then correspond to the $\alpha_i$'s.

In \cref{alg:mhnfalg}, the degree $(d,e)$ is determined automatically by assuming \cref{conj:reg} is true and selecting a valid $(d,1)$ or $(1,e)$ based on a heuristic that takes into account the estimated computational cost and numerical considerations.\footnote{ For more details, see the function \texttt{minimumMultiDegree} in NormalFormCPD.jl.}

The key computational bottleneck is the computation of the left nullspace of $R_I(d,e)$ in line \ref{line_null_space} of \cref{alg:mhnfalg}. When the number of entries of $R_I(d,e)$ is smaller than 10,000, we use the standard SVD-based kernel computation. In larger cases, for efficiency, we propose to employ Arpack.jl's \texttt{eigs} function to extract the left null space from the Hermitian Gram matrix $G = R_I(d,e) (R_I(d,e))^H$, where $^H$ denotes the Hermitian transpose.
The \texttt{eigs} function (with parameters \texttt{tol = 1e-6} and \texttt{maxiter = 25}) extracts the $r$ eigenvalues of smallest modulus and their corresponding eigenvectors. 
Note that Arpack.jl employs a Bunch--Kaufman factorization \cite{BK1977} of the positive semidefinite input matrix $G$ to perform its Lanczos iterations. Using \texttt{eigs} was up to $75\%$ faster than the SVD-based approach for large problems.

The final step of \cref{alg:mhnfalg} is implemented as discussed at the end of \cref{sec_sub_hnfalgorithm}. The kernel is computed with an SVD and $\gamma_i$ is taken as the left singular vector corresponding to the smallest singular value. After this step, we find $(\beta_i, \gamma_i)$.  
Observe that $f_j(\beta_i, \gamma_i)$ should vanish exactly for all $j = 1, \ldots, s$.
We propose to refine the approximate roots $(\beta_i, \gamma_i)$ by applying three Newton iterations on $f_1 = \cdots = f_s = 0$. This adds a minor computational overhead of $r(m+1)(n+1)(m+n)^2$. 

\subsection{Impact on the accuracy of computational optimizations}\label{sec_sub_accuracy}
We compare and evaluate the impact on numerical accuracy of four variants of \cref{alg:mhnfalg}. Two options are considered in each combination: 
\begin{enumerate}
\item[(i)] refining the approximate roots $(\beta_i,\gamma_i)$ with $3$ Newton steps (\verb|+newton|) or not, and 
\item[(ii)] computing the left nullspace of $R_I(d,e)$ with Arpack.jl's iterative \verb|eigs| method applied to the Gram matrix or using the standard \verb|svd|. 
\end{enumerate}
We append \verb|_eigs|, \verb|_eigs+newton|, \verb|_svd|, and \verb|_svd+newton| to \algoname{} to distinguish the variants.

The experimental setup is as follows.
Random rank-$r$ tensors are generated as explained at the start of this section, sampling one rank-$r$ tensor for each combination $(\ell+1,m+1,n+1,r)$ with $25 \ge m + 1 \ge n+1 \ge 2$ and $\ell+1=r=\min\{\mathcal{R}(m,n,(2,1)),mn\}$ where $\mathcal{R}$ is as in \cref{eqn_Rc_expression}.
For each variant of \algoname{}, the \textit{relative backward error}
\(
 \left\| \A - \sum_{i=1}^r \alpha_i' \otimes \beta_i' \otimes \gamma_i' \right\|_F  \| \A \|_F^{-1},
\) and execution time is recorded,
where $(\alpha_i', \beta_i', \gamma_i')$ are the estimates obtained by the algorithm. 

\begin{figure}
 \begin{minipage}{.49\textwidth}\centering\small 
  \includegraphics[width=\textwidth]{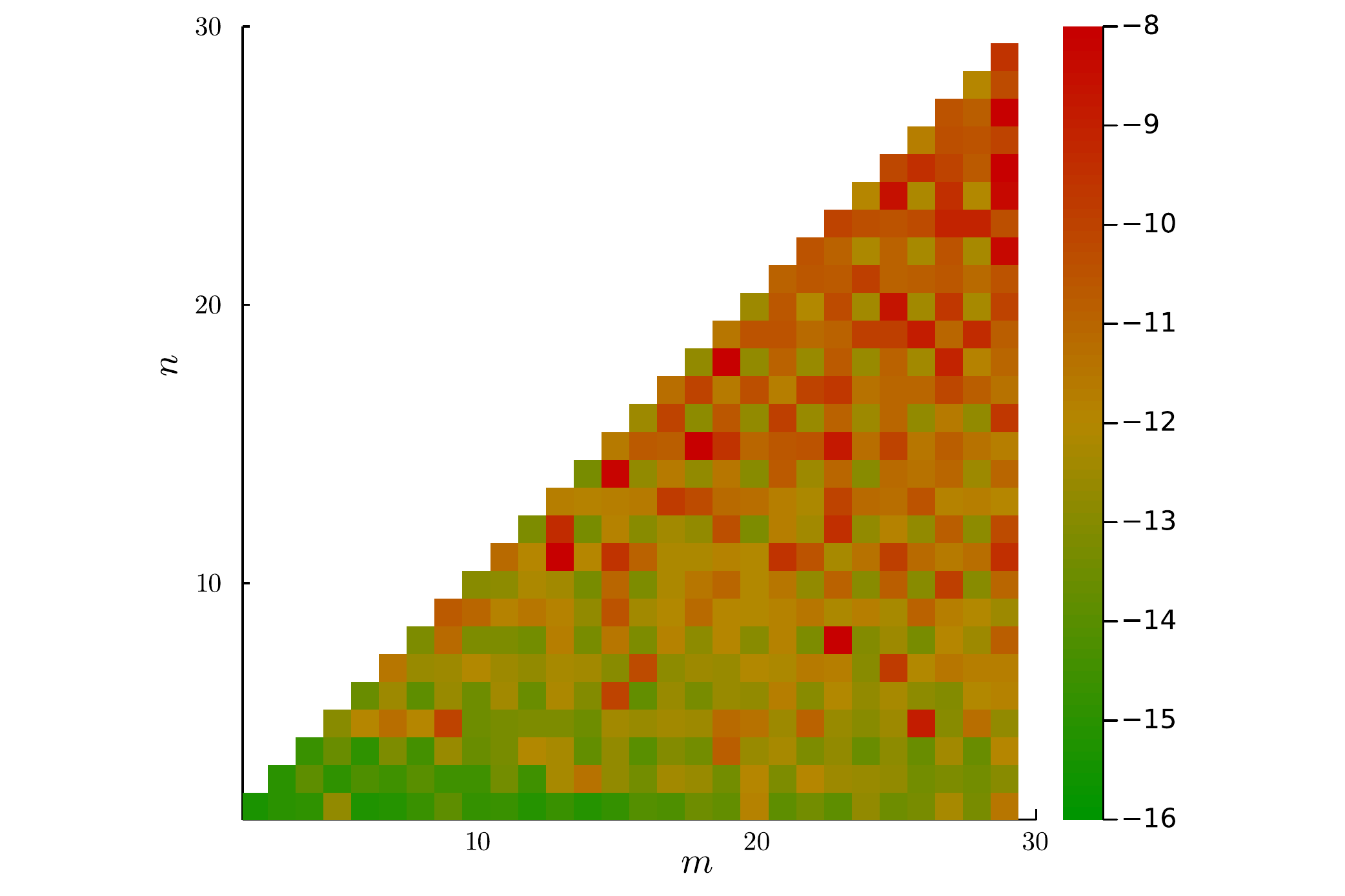}\\
  (a) \verb|cpd_hnf_eigs|
  
  \includegraphics[width=\textwidth]{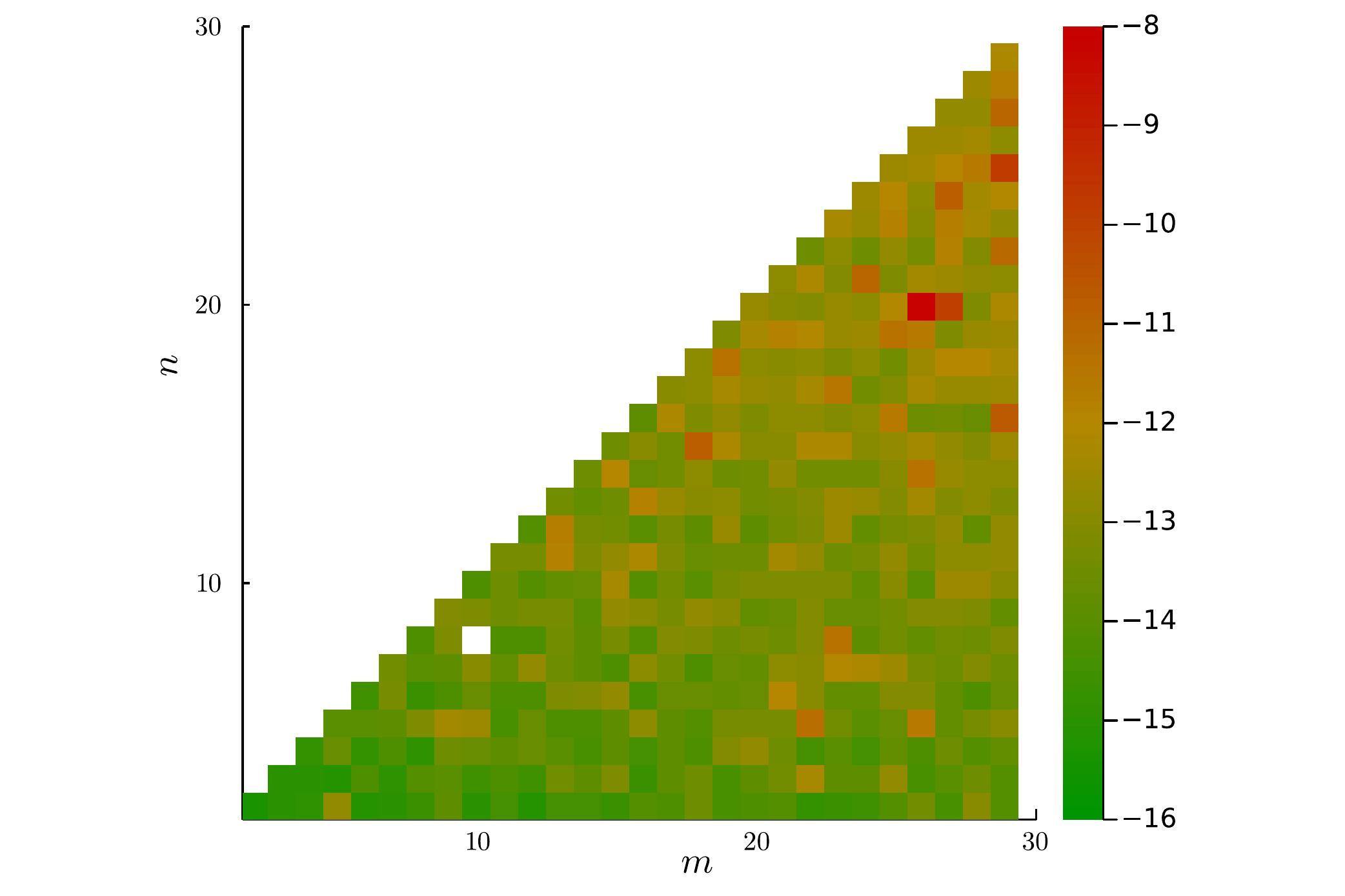}
  (c) \verb|cpd_hnf_svd|
 \end{minipage}
\hfill 
 \begin{minipage}{.49\textwidth}\centering\small 
  \includegraphics[width=\textwidth]{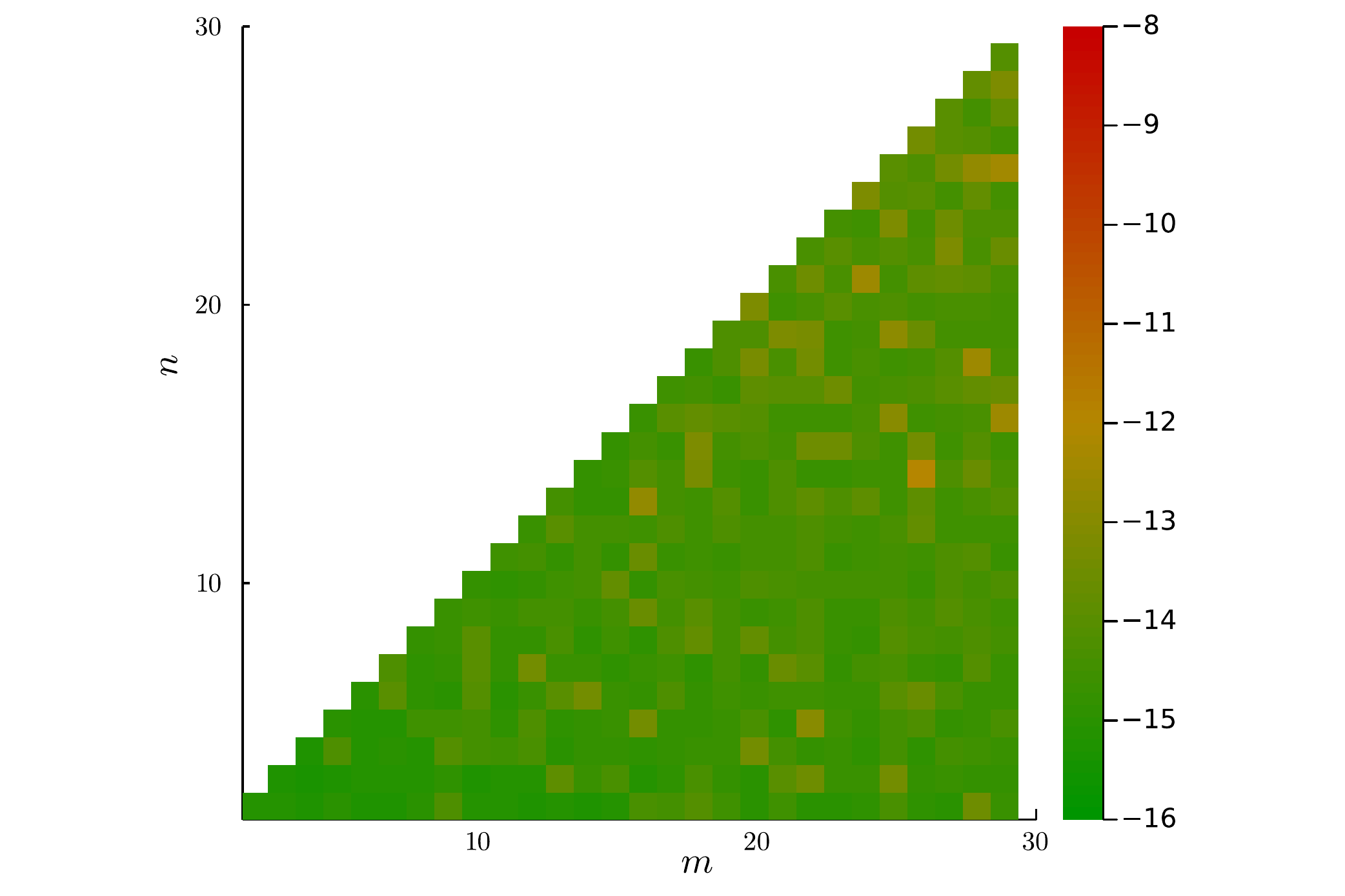}\\
  (b) \verb|cpd_hnf_eigs+newton|
  
  \includegraphics[width=\textwidth]{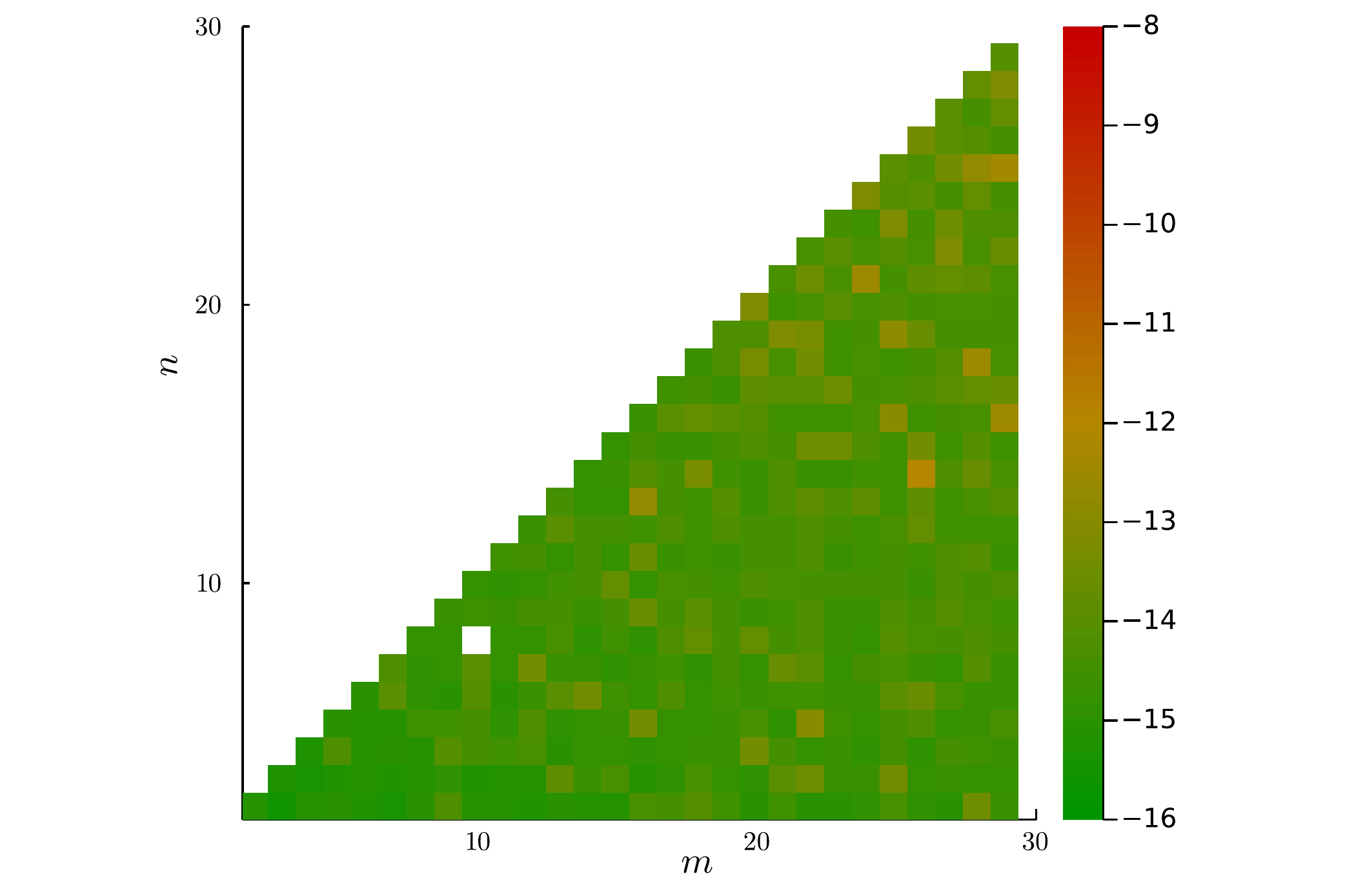}
  (d) \verb|cpd_hnf_svd+newton|
 \end{minipage}
 
 \caption{A comparison of the $\log_{10}$ of the relative backward error of the proposed \algoname{} in the four variants discussed in \cref{sec_sub_accuracy} on random rank-$r$ tensors in $\R^{\ell+1} \otimes \R^{m+1} \otimes \R^{n+1}$ with $\ell\ge m\ge n$. The largest dimension and the rank satisfy $\ell+1 = r = \min\{\mathcal{R}(m,n,(2,1)), mn\}$, where the function $\mathcal{R}$ is as in \cref{eqn_Rc_expression}. The color scale is the same in all plots.}
 \label{fig_variants}
\end{figure}

The experiment was performed on \textit{cascadelake} nodes and the results are displayed in \cref{fig_variants}. It is visually evident in the right panels that the Newton refinement improves the overall relative backward error for both \verb|eigs| and \verb|svd| by several orders of magnitude and brings both to seemingly the same level of about $10^{-16}$ to $10^{-12}$. On the other hand, there is a marked difference in accuracy between \verb|eigs| and \verb|svd| \emph{before} Newton refinement, the \verb|svd| being more accurate by about 1--3 orders of magnitude. It is interesting to compare these results with the leftmost panel of \cref{fig_accuracy}(b), which shows the corresponding results on the same tensors for a conceptually similar state-of-the-art method from the literature. The method also requires a kernel computation that, in the displayed graph, is performed with a singular value decomposition. This means that the methods in \cref{fig_variants}(c) and the left panel of \cref{fig_accuracy}(b) can be directly compared. It can be seen that the proposed \verb|cpd_hnf_svd| method, even without Newton refinement, can be up to $3$ orders of magnitude more accurate (in relative backward error). The differences in accuracy are thus not exclusively attributable to the Newton refinement.

We did not include a visualization of the timings because they visually look very similar. By adding the Newton refinement, the total execution time over all cases only modestly increased: $3.4\%$ for \verb|eigs| and $3.8\%$ for \verb|svd|. The relative increase was larger for the small cases while it was close to $0$--$5$\% for the large cases ($n+1 \ge 20$). Given that the accuracy can be improved by orders of magnitude, we conclude from these experiments that a few steps of Newton refinement of the approximate roots $(\beta_i,\gamma_i)$ is highly recommended.

After deciding that Newton refinement is our default choice and observing in the right panels of \cref{fig_variants} there is no significant difference in accuracy between the \verb|eigs| and \verb|svd|, we investigate their relative computational performance. In terms of total execution time over all cases, the \verb|eigs| variant is $29.1$\% faster. For the largest case this increases to a $40.9$\% reduction in execution time.

From the above experiments, we conclude that the variant with an iterative eigensolver and Newton refinement is an appropriate default choice. In the remainder of the experiments, \verb|cpd_hnf| refers to \verb|cpd_hnf_eigs+newton|, the variant using Arpack's iterative eigenvalue method applied to the Gram matrix combined with $3$ Newton steps to refine the approximate roots.

\subsection{Comparison with the state of the art}
In the second experiment, we compare \algoname{} with the current state of the art in direct numerical methods for tensor rank decomposition in terms of accuracy and computational performance. The algorithm developed by \cite{domanov2017canonical,domanov2014canonical} is an advanced direct numerical method that shares several high-level characteristics with \algoname{}:
\begin{enumerate}
 \item[(i)] Both methods assume the target rank $r$ is supplied to the decomposition algorithm.  They operate in a similar regime of ranks and are able to treat the full range of generically $r$-identifiable tensors in unbalanced tensor spaces. The method by Domanov and De Lathauwer can even deal with some ranks $r > \ell+1 \ge m+1 \ge n+1$, while our algorithm cannot.
 
 \item[(ii)] Like our method, Domanov and De Lathauwer rely on a simultaneous diagonalization procedure to extract one or two of the \emph{factor matrices}.
 
 \item[(iii)] To deal with high ranks, both methods rely on the construction of an auxiliary matrix whose size is parameterized by an integer. By increasing this integer, the range of ranks that is covered by the algorithms is broadened at the cost of a substantially increased computational complexity. In both algorithms, the asymptotic computational complexity is determined by the cost of computing the kernel of this auxiliary matrix. Contrary to \cite{domanov2017canonical,domanov2014canonical}, we are able to give a precise connection between this integer and the range of ranks we can cover (subject to \cref{conj:reg}).
 
 \item[(iv)] When $\ell+1 \ge m+1 \ge r$, both approaches can be considered pencil-based algorithms. For this reason, our comparison will focus on the case where $\ell+1 \ge r \ge m+1 \ge n+1$. 
\end{enumerate}

A Matlab implementation of Domanov and De Lathauwer's algorithm \cite{domanov2017canonical} is available as part of the Tensorlab+ repository \cite{tensorlabplus2022} as \texttt{cpd3\_gevd} in the \texttt{domanov2017laa} directory. We refer to it as \texttt{cpd\_ddl} henceforth. \texttt{cpd\_ddl} has an accurate and a fast option for computing the kernel of the auxiliary matrix. We found in our experiments that the dimension of this kernel is often misjudged by the fast implementation. Consequently, it usually increases the size of the auxiliary matrix and causes it to exceed the $16.2$GB memory limit we put on the size of that matrix. As a concrete statistic, in the configuration with $d=2$ below, the fast version of \texttt{cpd\_ddl} failed in over 75\% of the cases. Therefore, we exclusively compare our implementation with the accurate version \texttt{cpd\_ddl}. 

We generate random rank-$r$ tensors $\A$ as described at the start of this section.
We apply both \algoname{} and \texttt{cpd\_ddl} to these random tensors for all of the following configurations:\footnote{Both algorithms were applied to the same rank-$r$ tensors. To deal with the different programming languages and to limit storage demands, we generated a buffer of $10^7$ reals sampled i.i.d.~from a standard normal distribution. The $\alpha_i$, $\beta_i$, and $\gamma_i$ of the true decomposition were then generated from this buffer. This entails there is a statistical correlation between the various tensors that are decomposed. However, we judged that this does not affect the validity of our conclusions. 
For each line in the set of configurations, a different random buffer was generated.}
\begin{align*}
&40 \ge m+1 \ge n+1 \ge 2 \quad\text{for}\quad d=2, \text{ and}\\
&20 \ge m+1 \ge n+1 \ge 2 \quad\text{for}\quad d=3, \text{ and}\\
&15 \ge m+1 \ge n+1 \ge 2 \quad\text{for}\quad d=4,
\end{align*}
and in all cases we take $\ell+1 = r = \min\{\mathcal{R}(m, n, (d,1)), mn\}$. We do not provide our algorithm with the value of $d$. For each input tensor, \algoname{} determines the degree $(d,1)$ or $(1,e)$ automatically as explained in the previous subsection.
For each algorithm, we record the relative backward error, total execution time, and the size of the auxiliary matrix whose kernel is computed. As the latter is the dominant operation in both algorithms, its time and memory complexity gives a good estimate of the overall complexity.

\begin{figure}[t]
\includegraphics[width=\textwidth]{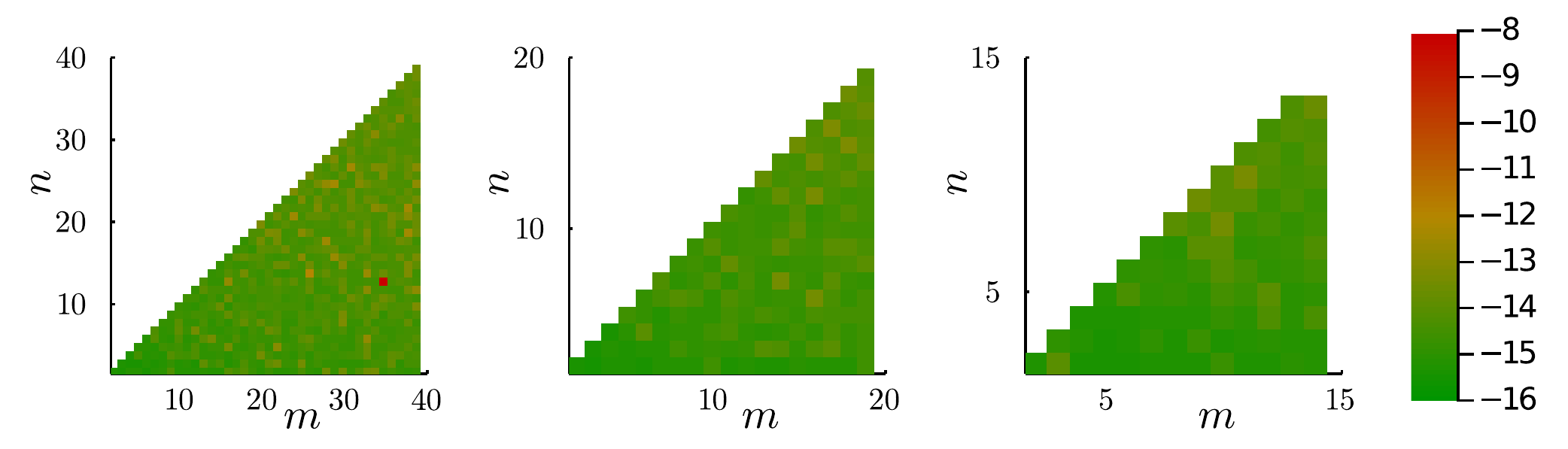}\\
\centerline{\small (a) \algoname}
\includegraphics[width=\textwidth]{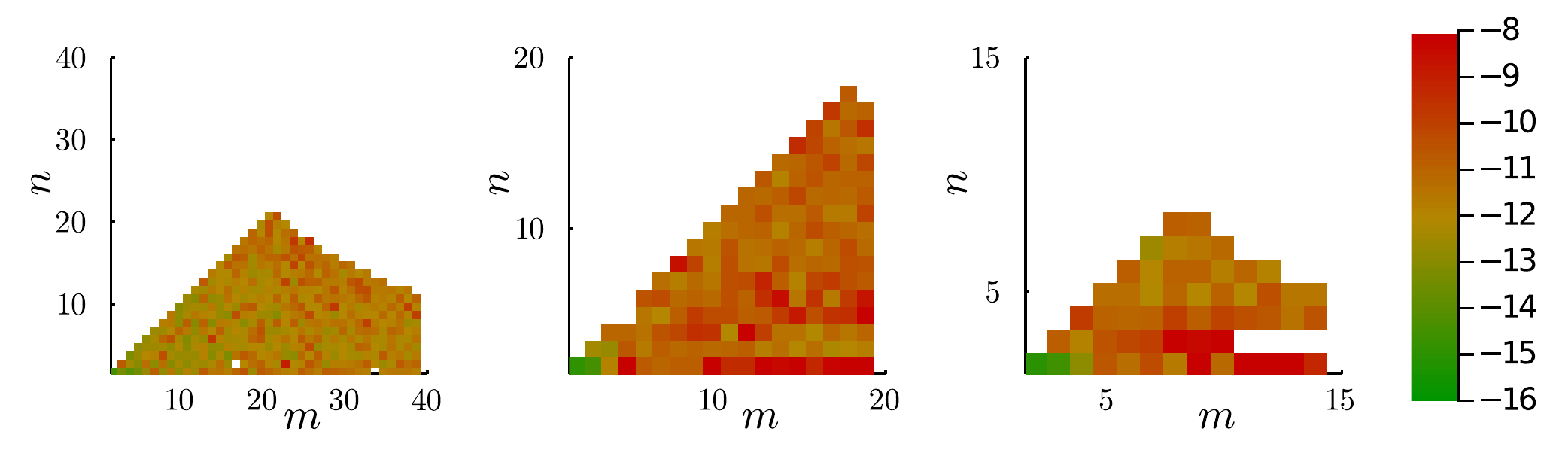}\\
\centerline{\small (b) \texttt{cpd\_ddl}}
 
\caption{A comparison of the $\log_{10}$ of the relative backward error of the proposed \algoname{} and the state-of-the-art \texttt{cpd\_ddl} from \cite{domanov2014canonical,domanov2017canonical} on random rank-$r$ tensors in  $\R^{\ell+1} \otimes \R^{m+1} \otimes \R^{n+1}$ with $\ell\ge m\ge n$. The largest dimension and the rank satisfy $\ell+1 = r = \min\{\mathcal{R}(m,n,(d,1)), mn\}$, where the function $\mathcal{R}$ is as in \cref{eqn_Rc_expression}. The outcomes for $d = 2, 3, 4$ are shown respectively in the left, middle, and right plots. The color scale is the same in all plots.} 
\label{fig_accuracy}
\end{figure}

For the first configuration \texttt{cpd\_ddl} was executed on \textit{bigmem} nodes of the supercomputer. These are the same as the \textit{skylake} nodes, except that they are equipped with $768$GB of main memory. The reason was that our Matlab driver routine consumed more than $162$GB of memory while running through all configurations.

The accuracy of \algoname{} and \texttt{cpd\_ddl} in this set of experiments is shown in \cref{fig_accuracy}. The newly proposed algorithm is consistently several orders of magnitude more accurate in terms of the relative backward error than the state of the art.

It took over 146 hours to obtain the (incomplete) results in the left panel of \cref{fig_accuracy}(b). To put this in perspective, we note that the computation for the left panel of \cref{fig_accuracy}(a) took a little over 17 hours. The missing values inside of the triangles in \cref{fig_accuracy}(b) indicate that \texttt{cpd\_ddl} wanted to allocate an auxiliary matrix that would require more than $16.2$GB of memory. The same memory constraint was also imposed on \algoname{}, but here only the largest case with $d=4$ and $m=n=14$ could not be treated.
The red pixel for $d = 2, m = 35, n = 13$ in \cref{fig_accuracy}(a) indicates that \algoname{} gave inaccurate results. This is the only case where \texttt{eigs} failed to find a sufficiently accurate nullspace. 

\begin{figure}[h!]
\includegraphics[width=\textwidth]{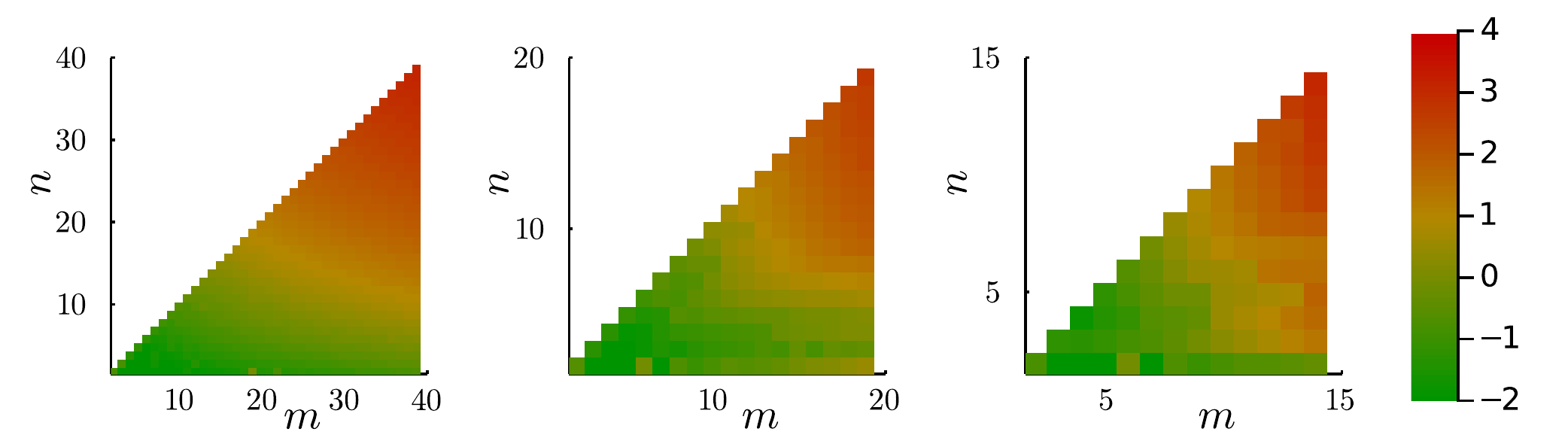}
\caption{The $\log_{10}$ of the total execution time (seconds) of \algoname{} in the setup from \cref{fig_accuracy}.}
\label{fig_timings}
\end{figure} 

\begin{figure}[h!]
\includegraphics[width=\textwidth]{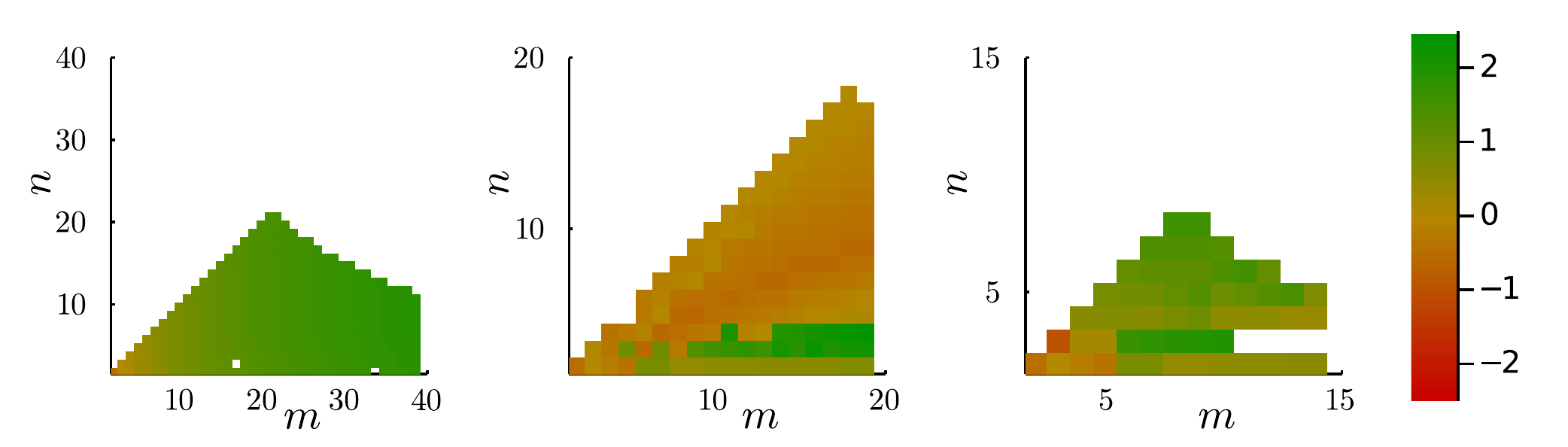}
\caption{The $\log_{10}$ of the size factor $\mu$ of \texttt{cpd\_ddl} relative to \algoname{}. The value $a$ means that \texttt{cpd\_ddl} consumes $10^a\times$ the memory \algoname{} needs to store $R_I(d,e)$.} 
\label{fig_scale_factor}
\end{figure}

The timings of our Julia implementation are shown in \cref{fig_timings}. 
As \texttt{cpd\_ddl} is implemented in a different programming language, we believe a direct comparison in timings is not opportune. Nevertheless, in both algorithms computing the kernel of the auxiliary matrix has the highest asymptotic time complexity. In \texttt{cpd\_ddl} it is an ${\rm L} \times {\rm L} $ square matrix, and in our algorithm, depending on the choice of degree, $R_{I}(d,e)$ is an almost-square ${\rm M} \times {\rm N}$ matrix with ${\rm M} \approx {\rm N}$. Therefore, we decided to plot the ratio between the number of elements in the auxiliary matrix of \texttt{cpd\_ddl} and \algoname{}. \Cref{fig_scale_factor} visualizes this factor $\mu = {\rm L}^2/({\rm M}{\rm N})$ in a logarithmic scale. This number indicates the fraction of memory that \texttt{cpd\_ddl} requires relative to \algoname{}. Raising it to the power $\frac{3}{2}$ gives an estimate of the speedup factor in execution time of \algoname{} relative to \texttt{cpd\_ddl}. 
Based on this estimate and the execution times we logged, it is accurate to state that the newly proposed algorithm outperforms the state of the art by up to two orders of magnitude for larger tensors.

\subsection{Robustness in the noisy case} \label{sec_sub_noisy}
The next experiment illustrates that \algoname{} can successfully decompose tensors even in the presence of some noise. The setup is as follows. We generate a random rank-$r$ tensor $\A$ of size $150 \times 25 \times 10$ by randomly sampling the $\alpha_i$,$\beta_i$, and $\gamma_i$ as before. Then, we add white Gaussian noise of relative magnitude $10^{e}$ for $e=-1,\ldots,-15$; that is, we compute $\A' = \A + 10^{e} \frac{\|\A\|_F}{\|\mathcal{E}\|_F} \mathcal{E}$. We provide $\A'$ as input to our algorithm and request a rank-$r$ decomposition. 

\begin{figure}[t]
\includegraphics[width=\textwidth]{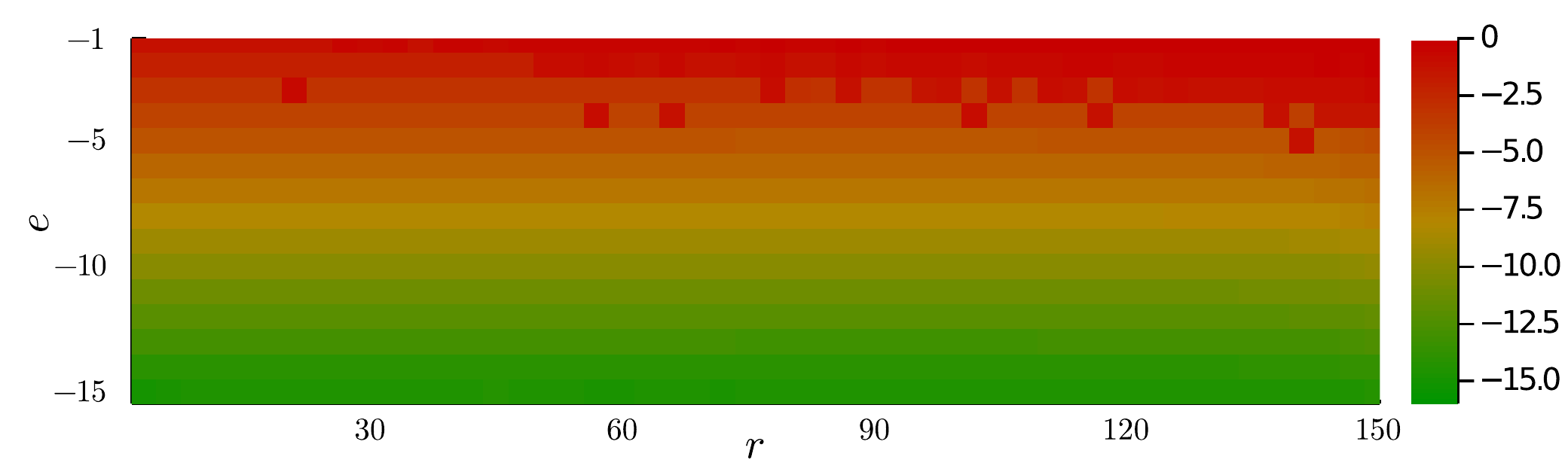}
\caption{The $\log_{10}$ of the relative backward error of decomposing a random rank-$r$ tensor of size $150 \times 25 \times 10$, corrupted by additive white Gaussian noise of relative magnitude $10^{e}$. The setup is described in detail in \cref{sec_sub_noisy}.} 
\label{fig_noise}
\end{figure}

The relative backward error between $\A'$ and the computed rank-$r$ decomposition is shown in a logarithmic scale in \cref{fig_noise}. Because of our setup, the rank-$r$ CPD of $\A$ has relative backward error $10^{e}$. A good tensor decomposition algorithm should thus return a CPD with a backward error of at most $10^{e}$. Remarkably, for tensors with random rank-$r$ CPDs, the proposed algorithm consistently manages to reach this benchmark when $e \le -5$. For ranks up to about half the maximum range (from $r=1$ to $70$), it even consistently manages to reach this benchmark for white noise of magnitude at most $10^{-2}$! Based on these results, we anticipate that \algoname{} could be employed as a rank-$r$ \emph{approximation} algorithm in the high signal-to-noise regime. We believe this observation warrants further research.

\subsection{An example of higher-order tensors} \label{subsec:higherorder}
The last experiment illustrates that the reshaping trick combined with a decomposition algorithm that works well in the unbalanced regime (such as \algoname{} and \texttt{cpd\_ddl} \cite{domanov2014canonical,domanov2017canonical}) is a powerful technique for decomposing high-order, high-rank tensors, even with a balanced shape.

As an example, we generated an eighth-order real tensor of size $7 \times 7 \times 7 \times 7 \times 6 \times 6 \times 5 \times 5$ and rank $r = 1000$ with factor matrices whose entries are sampled i.i.d.~from a standard normal distribution. Here is what happens:
\small
\begin{verbatim}
Grouping [[4, 6, 7, 8], [1, 2, 3], [5]] and reshaped to 
(1050, 343, 6) tensor in 0.810859903 s
1. Performed ST-HOSVD compression to (1000, 343, 6) in 0.44888179 s
    Swapped factors 2 and 3, so the tensor has size (1000, 6, 343)
    Selected degree increment d_0 = [1, 0]
2. Constructed kernel of A_1 of size (1000, 2058) in 22.333814068 s
3. Constructed resultant map of size (7203, 6348) in 72.176802896 s
4. Constructed res res' in 1.266772414 s
5. Computed cokernel of size (1000, 7203) in 108.332858902 s
6. Constructed multiplication matrices in 2.037837294 s
7. Diagonalized multiplication matrices and extracted solution 
    in 78.097176017 s
8. Refined factor matrices Y and Z in 151.170096114 s
9. Recovered factor matrix X in 0.186951757 s
10. Recovered the full factor matrices in 0.9396202759999999 s
Computed tensor rank decomposition in 440.457582263 s
Relative backward error = 3.873171296624731e-15
Relative forward error = 7.303893102189592e-14
\end{verbatim}
\normalsize

To our knowledge, this computation represents the first time any tensor decomposition algorithm of any type (i.e., alternating least squares, optimization-based methods, direct algebraic methods, homotopy-based methods, or heuristic methods) successfully decomposes a high-order rank-$1000$ tensor that cannot be reshaped to an order-$3$ tensor whose CPD can be computed with a pencil-based algorithm.

\section{Conclusions} \label{sec:conclusions}
The \algoname{} algorithm proposed in this paper computes the CPD of tensors satisfying \cref{assum:vanishingideal} using numerical linear algebra techniques for solving systems of polynomial equations. Its complexity is governed (\cref{prop_exponential,prop_polynomial}) by the regularity of a homogeneous, $\N^2$-graded ideal obtained from a flattening. We presented a formula for degrees $(d,1)$ in the regularity for generic tensors of many formats (\cref{thm:conjcases}) and proposed \cref{conj:reg} for the general case. Our experiments show that \algoname{} produces backward errors that are almost always of the order of the machine precision. This improves upon the previous state of the art by several orders of magnitude (see \cref{fig_accuracy}). In the high signal-to-noise-ratio regime, it seems the algorithm can be used to approximate noisy rank-$r$ tensors. 

Possible directions for future research include a further analysis of the regularity, a relaxation of the conditions in \cref{assum:vanishingideal}, generalizations for (semi-)symmetric tensors, and a theoretical analysis of \algoname{} in the noisy setting. 

\section*{Acknowledgements}
We are grateful to Alessandra Bernardi, Fulvio Gesmundo and Bernard Mourrain for fruitful discussions. We thank Ignat Domanov and Lieven de Lathauwer for kindly sharing a Matlab implementation of \texttt{cpd\_ddl} prior to its publication in the Tensorlab+ repository.

We thank the three anonymous reviewers for their detailed feedback on the manuscript that in particular led to the inclusion of \cref{subsec:reshapingtrick,sec_sub_accuracy}. In particular, we thank an anonymous referee for suggesting shorter proofs of \cref{prop:HFusc} and \cref{thm:boundr}.

Some of the resources and services used in this work were provided by the VSC (Flemish Supercomputer Center), funded by the Research Foundation---Flanders (FWO) and the Flemish Government.

Nick Vannieuwenhoven was partially supported by the Postdoctoral Fellowship with number 12E8119N from the Research Foundation---Flanders (FWO).

%%%%%%%%%%%%%%%%%%%%%%%%%%%%%%%%%%%%%%%%%%%%%%%%%%%%%%%%%%%%%%%%%%%%%%%%%%%%%%%%%%%%%%

\appendix
\section{Proof of the technical results} \label{app:proofs}
For two ideals $I, K \subset S$, we write
$ (I: K^\infty) = \{ f \in S ~|~ K^k f \subset I \text{ for some } k \in \N \}.$ The next result follows from \cref{lem:zerodim}.

\begin{corollary} \label{cor:satisvan}
Under \cref{assum:vanishingideal}, we have the identity $(I: K^\infty) = J$, where $I = \ideal{\ker \A_{(1)}}$, $J$ is the vanishing ideal 
\[ 
J = \ideal{ f \in S ~|~ f \text{ is homogeneous and } f(\z_i) = 0, i = 1, \ldots, r },
\]
and $K = \ideal{x_iy_j ~|~ 0 \leq i \leq m, 0 \leq j \leq n}$ is the irrelevant ideal of $S$.
\end{corollary}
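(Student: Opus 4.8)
The plan is to establish the two inclusions $(I:K^\infty)\subseteq J$ and $J\subseteq (I:K^\infty)$ separately, using only elementary properties of saturation together with \cref{lem:zerodim}. Recall that $(I:K^\infty)$ is by construction the saturation of $I$ at the irrelevant ideal $K$: it is a homogeneous ideal containing $I$, it is itself $K$-saturated, and it defines the same closed subscheme of $X=\PP^m\times\PP^n$ as $I$ does, since passing to the $K$-saturation only removes primary components supported on $V_X(K)=\emptyset$. The ideal $J$ is tautologically homogeneous and $K$-saturated as well.

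For the inclusion $(I:K^\infty)\subseteq J$ I would argue pointwise. Given a homogeneous $f$ with $K^Nf\subseteq I$ for some $N$ and a point $\z_i=(\beta_i,\gamma_i)$, pick indices $k,l$ with $\beta_{ik}\gamma_{il}\neq0$; this is possible because neither $\beta_i$ nor $\gamma_i$ is the zero vector. Then $(x_ky_l)^Nf\in I$, and since this polynomial is homogeneous and $\z_i\in V_X(I)$ by \cref{lem:zerodim}, it vanishes at $\z_i$; as $(x_ky_l)^N$ does not vanish at $\z_i$ we get $f(\z_i)=0$. Since $i$ was arbitrary, $f\in J$. This direction uses nothing beyond $V_X(I)=\{\z_1,\dots,\z_r\}$.

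For the reverse inclusion I would use the multiplicity-one statement in \cref{lem:zerodim}(iii), which says that $I$ cuts out the \emph{reduced} $r$-point subscheme $\{\z_1,\dots,\z_r\}$ of $X$. Consequently $(I:K^\infty)$ and $J$ both define this same reduced zero-dimensional subscheme and are saturated, so their $\N^2$-graded Hilbert functions both stabilise to $r$ for $(d,e)\gg(0,0)$ (Serre vanishing, here simply the statement that the Hilbert function of a saturated ideal of $r$ reduced points equals $r$ in high bidegrees), and $\HF_{S/I}$ stabilises to the same value $r$ since $I$ agrees with its saturation in high bidegrees. The inclusion $(I:K^\infty)\subseteq J$ gives a surjection $S/(I:K^\infty)\twoheadrightarrow S/J$ which is therefore an isomorphism in every sufficiently large bidegree, so $\bigl(J/(I:K^\infty)\bigr)_{(d,e)}=0$ for $(d,e)\gg(0,0)$. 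Finally, for a homogeneous $f\in J$ of arbitrary bidegree, multiplying by a suitable monomial of $K$ moves it into a large bidegree, hence into $(I:K^\infty)$; since $(I:K^\infty)$ is $K$-saturated this forces $f\in(I:K^\infty)$. Alternatively, one can phrase the whole argument via primary decomposition: the primary components of $I$ whose radical does not contain $K$ are the $\mathfrak{p}_i$-primary ones, where $\mathfrak{p}_i$ is the homogeneous prime ideal of $\z_i$; multiplicity one forces each such component to equal $\mathfrak{p}_i$, and then $(I:K^\infty)=\bigcap_{i=1}^r\mathfrak{p}_i=J$.

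The main obstacle is essentially bookkeeping: verifying that the standard facts used above — that $K$-saturation does not change the associated closed subscheme of $\PP^m\times\PP^n$, hence not the eventual value of the $\N^2$-graded Hilbert function, and that a saturated ideal of $r$ reduced points has Hilbert function eventually equal to $r$ — transfer verbatim from the classical projective setting to the $\N^2$-graded one. These are available in \cite[Section 5.5]{telen2020thesis} and \cite{miller2005combinatorial}, so no genuinely new argument is required; one only has to cite them at the right places and make sure the multiplicity-one hypothesis of \cref{lem:zerodim}(iii) is invoked precisely where reducedness of the scheme is needed.
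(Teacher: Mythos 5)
Your proposal is correct and follows essentially the same route as the paper: both arguments rest on \cref{lem:zerodim}(iii), which identifies the subscheme of $X=\PP^m\times\PP^n$ cut out by $I$ with the reduced scheme of the $r$ points, and then invoke the standard fact that the $K$-saturation of a homogeneous ideal is determined by (and, for a saturated ideal, determines) this subscheme. The paper compresses all of this into a two-line citation of the module--sheaf correspondence on toric varieties, whereas you unpack it into the two explicit inclusions (the easy one by direct evaluation, the hard one via stabilization of the $\N^2$-graded Hilbert functions or, equivalently, primary decomposition); this is sound, provided the multigraded versions of the standard facts are cited as you indicate.
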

\begin{proof}
By \cref{lem:zerodim}, the modules $S/I$ and $S/J$ define the same subscheme of $X$ consisting of $r$ simple points. It follows that $K^k (J/I) = 0$ for large enough $k$. See for instance \cite[Proposition 5.3.10]{cox2011toric}.
\end{proof}

\begin{proof}[Proof of \cref{lem:h0}]
This follows from the fact that all $h \in S_{(d',e')}$ satisfying $h(\z_i) = 0$ lie on a hyperplane $H_i$ through the origin in the $\C$-vector space $S_{(d',e')}$. Any $h_0$ in the complement of $H_1 \cup \cdots  \cup H_r$ satisfies $h_0(\z_i) \neq 0, i = 1, \ldots, r$.
\end{proof}
\begin{proof}[Proof of \cref{lem:hf}]
Let $J = (I: K^\infty)$ be the saturation of $I$ with respect to the irrelevant ideal $K$ (see \Cref{cor:satisvan}). First, it follows from \cite[Propositions 4.4 and 6.7]{maclagan2004multigraded} that there exists $(\delta_1,\epsilon_1) \in \N^2$ such that $\HF_{S/J}(\delta_1+\delta',\epsilon_1+\epsilon') = r$ for all $(\delta',\epsilon') \in \N^2$. Secondly, we show that there exists $(\delta_2,\epsilon_2) \in \N^2$ such that $I_{(\delta_2 + \delta',\epsilon_2 + \epsilon')} = J_{(\delta_2 + \delta',\epsilon_2 + \epsilon')}$ for all $(\delta',\epsilon') \in \N^s$. To see this, note that $K^k J \subset I$ for some $k \in \N$ (see e.g.\ \cite[Chapter 4, Section 4, Proposition 9]{cox2013ideals}) and $K^k = \ideal{S_{(k,k)}}$. For a finite set of homogeneous generators $g_1, \ldots, g_m$ of $J$, choose $(\delta_2,\epsilon_2)$ such that entry-wise, $(\delta_2,\epsilon_2) \geq (k, k) + \deg(g_i),~ i = 1, \ldots, m$. Take $(d,e) \in  (\max(\delta_1,\delta_2), \max(\epsilon_1,\epsilon_2)) + \N^2$ such that $(d,e) \neq (1,1)$ and $(d-1,e-1) \geq (0,0)$.
\end{proof}

\begin{proof}[Proof of \cref{thm:MHNFlowrank}]
Under the assumptions of the theorem, $((0,1),(1,0))$ is a regularity pair for the ideal $J = (I:K^\infty)$, defining the same set of points $V_X(J) = V_X(I)$ (see \cref{cor:satisvan}). The statement now follows from \cite[Theorem 5.5.3, Propositions 5.5.4 and 5.5.5]{telen2020thesis}.
\end{proof}

\begin{proof}[Proof of item (3) in \cref{thm:conjcases}]
We prove the case $m = 2$ in item (3) of the theorem. It suffices to show that for some configuration $Z$, the matrix
\[ 
A' = \begin{bmatrix}
-\Gamma H_0 & \\
 & - \Gamma H_0 \\
 \Gamma H_2 & - \Gamma H_1
\end{bmatrix}
\] 
has full rank. We set $\beta_{i0} = 1, i = 1, \ldots, r$, such that $H_0$ is the identity matrix. The condition \eqref{eq:goodregion} ensures that $A'$ has more rows than columns, so it suffices to show that $\ker A' = \{0\}$. Suppose $A' v = 0$, then $v$ can be split into $v_1, v_2 \in \C^r$ such that $\Gamma v_1 = \Gamma v_2 = 0$. If $r \leq n+1$, it is clear that this implies $v_1 = v_2 = 0$ for generic $Z$. Therefore, we assume $r > n + 1$ and make the following choice for $\Gamma$: 
\[
\Gamma = \begin{bmatrix}
1 &  & & & \gamma_{n+2,0} & \cdots & \gamma_{r0} \\
  & 1& & & \gamma_{n+2,1} & \cdots & \gamma_{r1} \\
  &  & \ddots & & \vdots & & \vdots \\ 
  &  &  & 1 & \gamma_{n+2,n} & \cdots & \gamma_{rn}  
\end{bmatrix}  = \begin{bmatrix} \id_{n+1} & \hat{\Gamma} \end{bmatrix} \in \C^{(n+1) \times r},
\]
where $\hat{\Gamma} \in \C^{(n+1) \times \kappa}$ is the submatrix of $\Gamma$ consisting of its last $\kappa = r - (n+1)$ columns. We have that $\Gamma v_i = 0$ implies $v_i = \Gamma^\perp w_i$ for some $w_i \in \C^\kappa$ and $\Gamma^\perp = \left [ \begin{smallmatrix}
-\hat{\Gamma} \\ \id_\kappa
\end{smallmatrix} \right ]$. Hence $Av = 0$ is equivalent to 
$ \begin{bmatrix}
\Gamma H_2 \Gamma^\perp & -\Gamma H_1 \Gamma^\perp
\end{bmatrix} \left [ \begin{smallmatrix}
w_1 \\ w_2
\end{smallmatrix} \right ] = 0.$
The condition \eqref{eq:goodregion} implies $2 \kappa \leq n+1$, so that the coefficient matrix in this equation has more rows than columns. Upon closer inspection, we see that 
\[ 
\Gamma H_q \Gamma^\perp 
= \begin{bmatrix}
(\beta_{n+2,q} - \beta_{1q})\gamma_{n+2,0} & \cdots & (\beta_{rq} - \beta_{1q}) \gamma_{r0} \\
(\beta_{n+2,q} - \beta_{2q})\gamma_{n+2,1} & \cdots & (\beta_{rq} - \beta_{2q}) \gamma_{r1} \\
\vdots & & \vdots \\
(\beta_{n+2,q} - \beta_{n+1,q})\gamma_{n+2,n} & \cdots & (\beta_{rq} - \beta_{n+1,q}) \gamma_{rn} \\
\end{bmatrix}
= \begin{bmatrix}
   (\beta_{jq} - \beta_{i+1,q})\gamma_{ji}
  \end{bmatrix}_{\substack{0\le i\le n,\\ n+2\le j\le r}}.
\]
In order to make rows $\kappa+1, \ldots, 2 \kappa$ equal to zero in $\Gamma H_2 \Gamma^\perp$, we set $\beta_{\kappa + 1, 2} = \beta_{\kappa+2, 2} = \cdots = \beta_{2 \kappa,2} = \beta_{n+2,2} = \beta_{n+3,2} = \cdots = \beta_{r,2} = 1$. All other $\beta$-coordinates are chosen at random, such that all entries of $\begin{bmatrix}
\Gamma H_2 \Gamma^\perp & -\Gamma H_1 \Gamma^\perp\end{bmatrix} $, except those in the rows $\kappa+1, \ldots, 2 \kappa$ of $\Gamma H_2 \Gamma^\perp$, are of the form $\star \gamma_{ji}$, with $\star$ some non-zero complex number. Then,
\[ 
\begin{bmatrix}
\Gamma H_2 \Gamma^\perp & -\Gamma H_1 \Gamma^\perp\end{bmatrix}  = \begin{bmatrix}
C_{11} & C_{12} \\ 0 & C_{22} \\
D_1 & D_2 
\end{bmatrix},
\]
where $C_{ij} \in \C^{\kappa \times \kappa}$ and $D_i \in \C^{(n+1 -2\kappa) \times \kappa }$. The minor corresponding to the first $2\kappa$ rows is seen to be $\det(C_{11}) \det(C_{22})$. This is a product of two nonzero polynomials in the parameters $\gamma_{ji}$, $i = 0, \ldots, n, j = n+2, \ldots, r$. For generic choices of the parameters, this minor is non-zero, hence $w_1 = w_2 = 0$, and thus $v = 0$ and $\ker A' = \{0\}$.
\end{proof}

 \bigskip
 \bigskip

 \noindent{\bf Authors' addresses:}
\medskip

\noindent Simon Telen, MPI-MiS Leipzig and CWI Amsterdam {\em (current)}
\hfill {\tt simon.telen@mis.mpg.de}

\noindent Nick Vannieuwenhoven, KU Leuven \hfill{\tt nick.vannieuwenhoven@kuleuven.be}
\end{document}